\newcommand{\Pp}{\mathbb{P}}
\newcommand{\Z}{\mathbb{Z}}
\newcommand{\N}{\mathbb{N}}
\newcommand{\R}{\mathbb{R}}
\newcommand{\Ccal}{\mathcal{C}}
\newcommand{\Dcal}{\mathcal{D}}
\newcommand{\Fcal}{\mathcal{F}}
\newcommand{\Tcal}{\mathcal{T}}
\newcommand{\Xcal}{\mathcal{X}}
\newcommand{\Cfrak}{\mathfrak{C}}
\newcommand{\Rfrak}{\mathfrak{R}}
\DeclareMathOperator{\bdim}{\boldsymbol{\dim}}
\newcommand{\bc}{\boldsymbol{c}}
\DeclareMathOperator{\Hom}{Hom}
\DeclareMathOperator{\mods}{mod}
\DeclareMathOperator{\End}{End}
\DeclareMathOperator{\Ext}{Ext}
\DeclareMathOperator{\image}{im}
\DeclareMathOperator{\coker}{coker}
\DeclareMathOperator{\add}{add}
\DeclareMathOperator{\codim}{codim}
\DeclareMathOperator{\Filt}{Filt}
\DeclareMathOperator{\Fac}{Fac}
\DeclareMathOperator{\rank}{rk}
\DeclareMathOperator{\Sub}{Sub}
\DeclareMathOperator{\pd}{pd}
\theoremstyle{plain}
\newtheorem{thm}{Theorem}
\newtheorem{lem}[thm]{Lemma}
\newtheorem{prop}[thm]{Proposition}
\theoremstyle{definition}
\newtheorem{defn}[thm]{Definition}
\newtheorem{exmp}[thm]{Example}
\begin{document}
\mainmatter

\title{Wall-and-chamber structures for finite-dimensional algebras and $\tau$-tilting theory}
\titlemark{The wall-and-chamber structure of an algebra}

\emsauthor{1}{Maximilian Kaipel}{M.~Kaipel}
\emsauthor{2}{Hipolito Treffinger}{H.~Treffinger}


\emsaffil{1}{Postal address \email{Email}}

\classification[16G10, 16G20, 16G99, 16E20]{16-06}

\keywords{$\tau$-tilting theory, torsion classes, stability conditions, wall-and-chamber structures, bricks}



\begin{abstract}
    The \textit{wall-and-chamber} structure is a geometric invariant that can be associated to any algebra. 
    In this notes we give the definition of this object and we explain its relationship with torsion classes and $\tau$-tilting theory.
\end{abstract}

\makecontribtitle


\section{Introduction}

The main aim of representation theory of finite-dimensional algebras is to understand the category of (finitely presented) modules over a given algebra. 
One of the founding results in this area is a result by Gabriel \cite{Gabriel}, which states that the module category of any finite-dimensional algebra over an algebraically closed field is equivalent, in the sense of Morita \cite{Morita}, to the category of representations of a quiver with relations (which depend on the original algebra).
This was breakthrough in the field, not only because it reduces greatly the universe of algebras to be studied, but also because with the incorporation of quivers, Gabriel also allowed the introduction of important combinatorial tools that have played a central role in the theory ever since.

Several years later, at the turn of the century, Fomin and Zelevinski introduced \textit{cluster algebras} \cite{FominZelevinsky2001} as a new approach to understand Lusztig's dual canonical bases. 
These are algebras that are defined from starting data, known as the \textit{the initial seed}, and then the reminding data is constructed using a combinatorial process known as \textit{mutation}.
For a family of cluster algebras of particular importance, the \textit{antisymmetric} cluster algebras, both the initial data and the mutation process can be encoded in terms of quivers.
As a consequence, many mathematicians started using the tools developed throughout the years in representation theory to understand cluster algebras and solve some of the standing conjectures in this new subject \cite{GeissLeclercSchroer, BuanMarshReinekeReitenTodorov, CalderoChapotonSchiffler, Amiot}.

Also, the study of cluster algebras made explicit certain patterns in the module category of every finite-dimensional algebra. 
It was in this context that notions which are central in current representation theory, such as $\tau$-tilting theory \cite{AIR2014} or higher homological algebra \cite{Iyama2007b} \cite{Iyama2007a}, were defined. 
For a detailed account of the origin of $\tau$-tilting theory, see \cite{Treffinger2021}.

Cluster algebras are subalgebras of certain function fields. 
This point of view allows a geometric approach to their study. 
The basic idea states that a cluster algebra should be the coordinate ring of a variety, which is unknown a priori. 
In this line of work one can find, for instance, the works of Fock and Goncharov \cite{FockGoncharov} and Gross, Hacking, Keel and Kontsevich \cite{GHKK2018}. 

In the latter the authors associate to each cluster algebra a \textit{cluster scattering diagram}, a cone complex in $\mathbb{R}^n$ where every cone comes with the element of a group satisfying equations that are determined by the cone complex itself.
They then used these scattering diagrams to show at once several open conjectures in cluster theory and to give new proofs for other conjectures that have been settled by different means.

Soon after, Bridgeland \cite{Bridgeland} defined for every finite-dimensional algebra a \textit{stability} scattering diagram. 
In this case the cone complex is defined using the stability conditions studied by King in \cite{King1994}.
Bridgeland showed moreover that, if the algebra is hereditary (i.e. $A = KQ$ for an acyclic quiver $Q$) over an algebraically closed field $K$, then the stability scattering diagram associated to $A$ is isomorphic to the cluster scattering diagram associated to the cluster algebra of $Q$.

The support of the stability scattering diagram, that is, its underlying cone complex, is known as the \textit{wall-and-chamber structure} of the algebra. 
These notes\footnote{These are the lecture notes of the course titled \textit{``Wall-and-chamber structures for finite dimensional algebras''} given by the second named author during the Workshop of the 20th edition of the International Conference on Representation of Algebras (ICRA 2022) held in Montevideo, Uruguay, between the 3rd and the 6th of August 2022.} are dedicated to give a detailed definition of the wall-and-chamber structure of an algebra and to explain the relationship of this object and $\tau$-tilting theory \cite{AIR2014}.

The structure of these notes is the following. After recalling some basic facts about the module category of an algebra, we introduce stability conditions, the wall-and-chamber structure of an algebra and we study some of their basic properties.
Afterwards we give a brief overview on $\tau$-tilting theory.
We then explain the relationship between the wall-and-chamber structure of an algebra and its $\tau$-tilting theory.
We finish the article by illustrating most of the results in a particular example. 

\section{Preliminaries} \label{sec:prelim}

For any finite-dimensional algebra $A$ we denote by $\mods A$ the category of finitely presented (right) $A$-modules. 
Let $M \in \mods A$, then we denote by $\add M$ the full subcategory of $\mods A$ additively generated by $M$, that is the category of all direct summands of direct sums of $M$. Furthermore, define the subcategories $\Fac M$ and $\Sub M$ to be given by
\[ \Fac M \coloneqq \{ X \in \mods A: \text{there exists an epimorphism } M^p \to X \to 0 \text{ for some $p \in \N$} \},\]
\[ \Sub M \coloneqq \{ X \in \mods A:  \text{there exists a monomorphism }  0 \to X \to M^p \text{ for some $p \in \N$}\}.\]
Finally, we need to introduce the following notation:
\[ T^\perp \coloneqq \{X \in \mods A: \Hom(T,X)=0 \}, \quad {}^\perp T \coloneqq \{X \in \mods A: \Hom(X,T) = 0\}.\]

In these notes we often work with a very particular type of subcategories of $\mods A$, the so-called \textit{torsion classes} \cite{Dickson66}.

\begin{defn} \label{def:torsionpair}\cite[Theorem 2.1]{Dickson66}
A pair of full subcategories $(\Tcal, \Fcal)$ of $\mods A$ is a \textit{torsion pair} if the following are satisfied:
\begin{enumerate}
    \item $\Hom_A(T,F)=0$ for $T \in \Tcal$ and $F \in \Fcal$, and
    \item for all $M \in \mods A$ there exists a short exact sequence given by
    \begin{equation}\label{eq:canseq} 0 \to tM \to M \to fM \to 0 \end{equation}
    where $tM \in \Tcal$ and $fM \in \Fcal$. 
    Then $\Tcal$ is called the \textit{torsion class} and $\Fcal$ the \textit{torsion-free class}.
\end{enumerate}
\end{defn}
It turns out that given a torsion pair $(\Tcal, \Fcal)$ this short exact sequence \ref{eq:canseq} is unique, see \cite[Proposition VI.1.5]{bluebookV1}, meaning that $tM$ and $fM$ depend functorially on $M$. We then call $0 \to tM \to M \to fM \to 0$ the \textit{canonical} short exact sequence of $M$ with respect to $(\Tcal, \Fcal)$. 
In this case we say that $tM$ is the \textit{torsion object} of $M$.

While torsion pairs are defined for an arbitrary abelian category, the following result gives us an additional characterisation of the torsion and torsion-free classes in our setting.

\begin{prop}\label{prop:torsionIFF} \cite[Theorem 2.3]{Dickson66}
A subcategory $\Tcal$ of $\mods A$ is a torsion class if and only if $\Tcal$ is closed under quotients and extensions.  In this case, the corresponding torsion-free class $\Fcal$ such that $(\Tcal, \Fcal)$ is a torsion pair is $\Fcal = \Tcal^\perp$.  

Dually, a subcateogory $\Fcal$ of $\mods A$ is a torsion-free class if and only if $\Fcal$ is closed under subobjects and extensions. In this case, the associated torsion class is $\Tcal = {}^\perp \Fcal$.
\end{prop}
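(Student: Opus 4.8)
The plan is to prove the first equivalence in both directions, pinning down $\Fcal=\Tcal^\perp$ along the way, and then to deduce the dual statement by duality. For the ``only if'' direction, suppose $(\Tcal,\Fcal)$ is a torsion pair and write $0\to tX\to X\to fX\to 0$ for the canonical sequence of an object $X$. If $T\in\Tcal$ and $T\twoheadrightarrow X$, then the composite $T\to X\to fX$ lies in $\Hom_A(\Tcal,\Fcal)=0$; since $T\to X$ is epi, the map $X\to fX$ vanishes, so $fX=0$ and $X=tX\in\Tcal$, giving closure under quotients. If $0\to T'\to X\to T''\to 0$ with $T',T''\in\Tcal$, then $X\to fX$ kills $T'$ because $\Hom_A(T',fX)=0$, hence factors through $T''$, and the induced map $T''\to fX$ is zero as well, so again $fX=0$ and $X\in\Tcal$, giving closure under extensions. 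For the identification, $\Fcal\subseteq\Tcal^\perp$ is axiom (1) of \Cref{def:torsionpair}, and if $X\in\Tcal^\perp$ then the monomorphism $tX\hookrightarrow X$ lies in $\Hom_A(\Tcal,X)=0$, so $tX=0$ and $X\cong fX\in\Fcal$; in particular $\Fcal$ is the unique torsion-free partner of $\Tcal$.

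For the ``if'' direction, assume $\Tcal$ is closed under quotients and extensions and put $\Fcal:=\Tcal^\perp$, so axiom (1) holds by construction and the real work is to produce the canonical sequence \eqref{eq:canseq} for each $M\in\mods A$. The key construction is to let $tM$ be the sum of the images of all morphisms $T\to M$ with $T\in\Tcal$. Since $M$ is finite-dimensional this is a finite sum; each such image is a quotient of an object of $\Tcal$ and hence lies in $\Tcal$ by closure under quotients, and applying closure under extensions to $0\to U\to U+V\to (U+V)/U\to 0$ (noting that $(U+V)/U$ is a quotient of $V$) shows by induction on the number of summands that $tM\in\Tcal$. Equivalently, $tM$ is the largest submodule of $M$ lying in $\Tcal$, since any submodule of $M$ in $\Tcal$ appears as the image of its own inclusion.

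It then remains to check that $M/tM\in\Fcal$. Suppose some morphism $T\to M/tM$ with $T\in\Tcal$ has nonzero image $N$; then $N\in\Tcal$ as a quotient of $T$, and its preimage $N'\subseteq M$ fits in a short exact sequence $0\to tM\to N'\to N\to 0$, whence $N'\in\Tcal$ by closure under extensions, while $N'\supsetneq tM$ because $N'/tM\cong N\neq 0$ — contradicting the maximality of $tM$. Thus $0\to tM\to M\to M/tM\to 0$ is the required sequence and $(\Tcal,\Tcal^\perp)$ is a torsion pair. The dual statement follows without further effort: the standard duality $D=\Hom_K(-,K)\colon\mods A\to\mods A^{\mathrm{op}}$ sends a subcategory $\Fcal$ of $\mods A$ closed under subobjects and extensions to a subcategory $D\Fcal$ of $\mods A^{\mathrm{op}}$ closed under quotients and extensions, which by the first part is a torsion class there with torsion-free class $(D\Fcal)^\perp=D({}^\perp\Fcal)$, and transporting back along $D$ exhibits $({}^\perp\Fcal,\Fcal)$ as a torsion pair in $\mods A$; alternatively one runs the dual construction directly, taking $tM$ to be the intersection of the kernels of all morphisms $M\to F$ with $F\in\Fcal$.

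I expect the genuine content to lie entirely in the ``if'' direction: the delicate points are that $tM$ is well defined — this is exactly where finite-dimensionality enters, ensuring that the sum of all $\Tcal$-submodules of $M$ is again a submodule lying in $\Tcal$ — and that $M/tM$ is torsion-free, where the maximality of $tM$ together with closure under extensions is used in an essential way. Everything else reduces to short diagram chases with the canonical sequence.
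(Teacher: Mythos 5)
Your proof is correct and complete: the "only if" direction, the sum-of-images construction of $tM$ with closure under quotients and extensions, the maximality argument showing $M/tM\in\Tcal^\perp$, and the passage to the dual statement via the duality $\Hom_K(-,K)$ are all sound, and finite-dimensionality is invoked exactly where it is needed. The paper itself gives no proof of this proposition, citing only Dickson's Theorem 2.3, and your argument is precisely the standard proof of that cited result (your parenthetical alternative for the dual half — the intersection-of-kernels construction — would need a small pushout argument to see that the kernel lies in ${}^\perp\Fcal$, but since your main route through the duality functor is complete, nothing is missing).
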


We furthermore assume that $A$ is a basic algebra. 
This means that there exist a collection of idempotents $\{ e_1, \dots, e_n\}$ such  that $1_A = \sum_{i=1}^n e_i$ verifying $e_iA \not \cong e_jA$ if $i \neq j$.
If we denote $e_iA$ as $P(i)$, we can therefore write $A= \bigoplus_{i=1}^n P(i)$.
Moreover, in this case, the set $\{P(1), \dots, P(n)\}$ contains exactly one representative of each isomorphism class of indecomposable projective $A$-modules.

For a module $M \in \mods A$, let $|M|$ denote the number of isomorphism classes of indecomposable direct summands. 
In particular, with the notation above, we have that $|A|=n$. 

The \textit{Grothendieck group} of $\mods A$ is the abelian group $K_0( \mods A) \coloneqq K_0(A) = \Fcal/\Fcal'$, where $\Fcal$ is the free abelian group generated by the set of isomorphism classes of a module $M \in \mods A$. Then, the Grothendieck group $K_0(A)$ is obtained by identifying $[M] = [L] + [N]$ corresponding to all short exact sequences
\begin{equation} 0 \to L \to M \to N \to 0 \label{eq:ses}\end{equation}
in $\mods A$. 
The Jordan-Hölder Theorem for modules states that every module $M \in \mods A$ has a composition series with unique length whose subfactors are simple modules which are unique up to permutation. 
In other words there exists a composition series
\[ 0 = M_0 \subset M_1 \subset \dots \subset M_t = M \]
which comes with short exact sequences
\[ 0 \to M_{j-1} \to M_j \to \underbrace{M_j /M_{j-1}}_{\cong S(i)} \to 0.\]
We may therefore decompose the equivalence class of any module $M \in K_0(A)$ as follows:
\begin{align*}
 [M]  = [M_t /M_{t-1}] + [M_{t-1}] = \dots  = \sum_{j=1}^t [M_{j} / M_{j-1}] = \sum_{i=1}^n a_i [S(i)],
\end{align*}
where $a_i$ is the multiplicity of the simple module $S(i)$ at vertex $i$ as a composition factor of $M$. 
Then $\{[S(1)], \dots, [S(n)]\}$ generates $K_0(A)$ and there is an isomorphism of abelian groups given by $\bdim: K_0(A) \to \Z^n$ sending $[S(i)] \mapsto \mathbf{e}_i$, where $\{\mathbf{e}_1, \dots, \mathbf{e}_n\}$ is the canonical basis of $\mathbb{Z}^n$.
Given an $A$-module $M$, we denote by $\bdim M$ the vector $\bdim ([M])\in \mathbb{Z}^n$ and we call it the \textit{dimension vector} of $M$. 
The reason behind this name is the following: If $A = KQ/I$ is the bounded path algebra of a quiver $Q=(Q_0, Q_1)$ with set of vertices $Q_0=\{1, \dots, n\}$ and $M$ is an $A$-module, then $\bdim M = (\dim_K M_1, \dots, \dim_K M_n)$ where $M_i$ is the $K$-vector space $M\mathbf{e}_i$.

Let 
\( D_A \coloneqq \text{diag}(\dim_K\End_A(S(1)), \dots , \dim_K\End_A(S(n)))\)
be the matrix whose diagonal entries equal the dimensions over $K$ of the endomorphism algebras of the simples. 
We can now define the inner product we use throughout these lecture notes as $\langle -,-\rangle: \R^n \times \R^n \to \R$ as
\[ \langle v, w \rangle = v^T D_A w. \]
Note that when $A=KQ/I$ the matrix $D_A$ is the identity matrix and this inner product corresponds with the classical dot product on $\R^n$.
Due to this remark, and by abuse of notation, the classical dot product in $\mathbb{R}^n$ is also denoted by $\langle - , - \rangle$.

\section{Stability conditions}

The study of stability conditions of module categories is due to King \cite{King1994}, who translated the geometric invariant theory of Mumford \cite{Mumford65} to quiver representations. We begin by recalling some basic concepts and definitions. Given a vector in $\R^n$, a so called \textit{stability condition}, we get two notions of stability for modules.

\begin{defn}\cite{King1994} Let $M \in \mods A$, take $v \in \Z^n \otimes \R = \R^n$. We say $M$ is \textit{$v$-semistable} if $\langle v, \bdim M \rangle = 0$ and if for any nonzero proper subobject $L$ of $M$ we have $\langle v, \bdim L \rangle \leq 0$ or, equivalently, we have $\langle v , \bdim N \rangle \geq 0$ for all nonzero proper quotients $N$ of $M$. We say $M$ is \textit{$v$-stable} if these inequalities are strict.
\end{defn}

\begin{defn} Define the \textit{category of $v$-semistable objects} $\mods_v^{ss}A$ to be the full subcategory of $\mods A$ whose objects are $v$-semistable.
\end{defn}

The following result is a special case of \cite[Proposition 2.20]{BSTStability} and shows that $\mods_v^{ss} A$ has desirable properties.

\begin{thm} \cite[Proposition 2.20]{BSTStability} The category $\mods_v^{ss}A$ is a wide subcategory of $\mods A$ i.e. closed under kernels, cokernels and extensions.
\end{thm}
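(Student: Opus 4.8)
The plan is to verify directly that $\mods_v^{ss}A$ is closed under kernels, cokernels and extensions by unwinding the defining (in)equalities of $v$-semistability. Throughout I will use that $\bdim$ is additive on short exact sequences (Jordan--H\"older), so that for $0 \to L \to M \to N \to 0$ we have $\langle v, \bdim M\rangle = \langle v, \bdim L\rangle + \langle v, \bdim N\rangle$, and I will use the equivalence between the subobject formulation ($\langle v, \bdim L\rangle \le 0$ for all proper nonzero $L \subseteq M$) and the quotient formulation ($\langle v, \bdim N\rangle \ge 0$ for all proper nonzero quotients $N$) of semistability. A convenient reformulation: $M$ is $v$-semistable iff $\langle v, \bdim M\rangle = 0$ and every nonzero submodule $L \subseteq M$ satisfies $\langle v, \bdim L\rangle \le 0$ (allowing $L = M$, where equality holds).

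First I would treat \emph{extensions}. Given $0 \to M' \to M \to M'' \to 0$ with $M', M''$ both $v$-semistable, additivity of $\bdim$ gives $\langle v, \bdim M\rangle = 0$ immediately. For a nonzero submodule $L \subseteq M$, consider $L \cap M'$ and the image $\bar L$ of $L$ in $M''$; these fit into $0 \to L\cap M' \to L \to \bar L \to 0$, so $\langle v, \bdim L\rangle = \langle v, \bdim(L\cap M')\rangle + \langle v, \bdim \bar L\rangle$. Since $L \cap M'$ is a submodule of the $v$-semistable module $M'$ we get $\langle v, \bdim(L\cap M')\rangle \le 0$, and since $\bar L$ is a submodule of the $v$-semistable module $M''$ we get $\langle v, \bdim \bar L\rangle \le 0$; summing yields $\langle v, \bdim L\rangle \le 0$, so $M$ is $v$-semistable.

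Next, \emph{kernels and cokernels}. Let $f\colon M \to N$ be a morphism of $v$-semistable modules, with image $I$, kernel $K$, cokernel $C$; so $0 \to K \to M \to I \to 0$ and $0 \to I \to N \to C \to 0$ are exact. The key point is to pin down $\langle v, \bdim I\rangle$. Since $I$ is a submodule of the $v$-semistable $N$, we have $\langle v, \bdim I\rangle \le 0$; since $I$ is a quotient of the $v$-semistable $M$, the quotient formulation gives $\langle v, \bdim I\rangle \ge 0$. Hence $\langle v, \bdim I\rangle = 0$, and then additivity forces $\langle v, \bdim K\rangle = 0$ and $\langle v, \bdim C\rangle = 0$ as well. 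It remains to check the submodule inequalities: any submodule of $K$ is a submodule of $M$, so satisfies $\langle v, -\rangle \le 0$ by $v$-semistability of $M$, giving that $K$ is $v$-semistable; and any quotient of $C$ is a quotient of $N$, so satisfies $\langle v, -\rangle \ge 0$ by $v$-semistability of $N$, giving (via the quotient formulation) that $C$ is $v$-semistable.

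The genuinely delicate step is the one establishing $\langle v, \bdim I\rangle = 0$ for the image, together with the bookkeeping that the inequalities for submodules/quotients of $K$ and $C$ are inherited; everything else is formal additivity. I would also double-check at the outset that $\mods_v^{ss}A$ is nonempty and contains $0$ (so that ``wide subcategory'' is not vacuous), which is clear since $\langle v, \bdim 0\rangle = 0$ with no proper nonzero subobjects to test. No appeal to the more general \cite[Proposition 2.20]{BSTStability} is needed for this special case; the argument above is self-contained.
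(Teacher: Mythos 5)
Your proposal is correct and follows essentially the same route as the paper: you pin down $\langle v, \bdim \image f\rangle = 0$ by viewing the image as both a quotient of the source and a subobject of the target, deduce the kernel and cokernel conditions from additivity and inheritance of the subobject/quotient inequalities, and handle extensions by splitting a submodule $L$ of the middle term into $L\cap M'$ and its image in $M''$, which is exactly the paper's diagram argument. No gaps to report.
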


\begin{proof}
Let $v \in \R^n$ and let $f: M_1 \to M_2$ be a homomorphism between $v$-semistable modules $M_1$ and $M_2$. If $f$ is zero or an isomorphism the result follows immediately. Otherwise, we want to show that $\ker f$ and $\coker f$ are $v$-semistable. Consider the short exact sequences
\[ 0 \to \ker f \to M_1 \to \image f \to 0, \]
\[ 0 \to \image f \to M_2 \to \coker f \to 0. \]

By definition, the $v$-semistable modules $M_i$ satisfy $\langle v, \bdim M_i \rangle = 0$ and $\langle v, \bdim L \rangle \leq 0$ for any subobject $L$ of $M_i$ or equivalently $\langle v, \bdim N \rangle \geq 0$ for any quotient $N$ of $M_i$. So in particular we get 
\[ \langle v, \bdim \image f \rangle \geq \langle v, \bdim M_1 \rangle = 0, \]
\[ \langle v, \bdim \image f \rangle \leq \langle v, \bdim M_2 \rangle = 0. \]
Thus $\langle v, \bdim \image f \rangle = 0$. Furthermore, from the exact sequences it follows that
\[ \bdim \ker f = \bdim M_1 - \bdim \image f \quad \text{and} \quad \bdim \coker f = \bdim M_2 - \bdim \image f,\]
which tells us that
\[ \langle v, \bdim \ker f \rangle = \langle v, \bdim M_1 \rangle - \langle v, \bdim \image f \rangle = 0,\]
and similarly $\langle v, \bdim \coker f \rangle = 0$. Since subobjects of $\ker f$ are subobjects of $M_1$ and quotients of $\coker f$ are quotients of $M_2$, they satisfy the necessary inequalities. Thus $\ker f, \coker f \in \mods_{v}^{ss} A$. \\

For extensions, we assume that $X,Z \in \mods_{v}^{ss}A$ in the diagram below and want to show that $Y \in \mods_{v}^{ss} A$. Similar to above we have that $\langle v, \bdim Y \rangle = \langle v, \bdim X \rangle + \langle v, \bdim Z \rangle = 0$ because of exactness of the sequence. What is left to prove is that for all subobjects $L$ of $Y$ we have $\langle v, \bdim L \rangle \leq 0$, which can be seen from the following diagram:

\begin{equation}
\begin{tikzcd}
0 \arrow[r] & X \arrow[r,"f",hook] & Y \arrow[r,"g",two heads] & Z \arrow[r] & 0 \\
0 \arrow[r] & \ker h \arrow[u, hook] \arrow[r, hook] &\arrow[u, hook,"i"]  L \arrow[r,"h", two heads] & \image (gi) \arrow[u, hook]  \arrow[r] & 0 \\
& 0 \arrow[u] & 0 \arrow[u] & 0 \arrow[u]
\end{tikzcd}
 \label{eq:closedsubobj}
\end{equation}
For any injection $L \hookrightarrow Y$ we have $\langle v, \bdim L \rangle = \langle v, \bdim \ker h \rangle + \langle v, \bdim \image(gi) \rangle \leq 0$.
\end{proof} 

The following definition of a brick is essential in studying stability conditions and related notions.

\begin{defn} 
An object $B \in \mods A$ is called a \textit{brick} if its endomorphism algebra $\End_A(B)$ is a division ring i.e. every nonzero element has a multiplicative inverse.
\end{defn}

The first connection between stable modules and bricks is the following.

\begin{prop}\cite[Theorem 1]{Rudakov1997}
If $B$ is $v$-stable then $B$ is a brick.
\end{prop}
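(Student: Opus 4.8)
The plan is to argue by contradiction: suppose $B$ is $v$-stable but not a brick, and produce a nonzero proper subobject of $B$ violating the strict stability inequality. Since $\End_A(B)$ is a finite-dimensional algebra over $K$, if it is not a division ring it contains a nonzero non-invertible element $\varphi \colon B \to B$. The key observation is that $\varphi$ is then neither zero nor an isomorphism, so both $\ker\varphi$ and $\image\varphi$ are nonzero \emph{proper} subobjects of $B$ (here we use that $B$ is a finite-dimensional module, so a non-injective endomorphism has nonzero kernel and a non-surjective one has proper image; and an endomorphism of a finite-dimensional module is injective iff surjective iff an isomorphism).

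The heart of the argument is the numerical computation. From the short exact sequences
\[ 0 \to \ker\varphi \to B \to \image\varphi \to 0, \qquad 0 \to \image\varphi \to B \to \coker\varphi \to 0 \]
and additivity of $\bdim$ on $K_0(A)$, we get $\langle v, \bdim \ker\varphi\rangle + \langle v, \bdim\image\varphi\rangle = \langle v, \bdim B\rangle = 0$ and likewise $\langle v,\bdim\image\varphi\rangle \le \langle v,\bdim B\rangle = 0$ would only be the weak inequality. So instead I would apply $v$-\emph{stability} directly: since $\image\varphi$ is a nonzero proper \emph{quotient} of $B$ (via $B \twoheadrightarrow \image\varphi$), strict stability gives $\langle v, \bdim\image\varphi\rangle > 0$; but $\image\varphi$ is also a nonzero proper \emph{subobject} of $B$ (via $\image\varphi \hookrightarrow B$), so strict stability gives $\langle v,\bdim\image\varphi\rangle < 0$. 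These two strict inequalities are contradictory, so no such $\varphi$ exists and $\End_A(B)$ is a division ring.

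I expect the only genuine subtlety — and it is minor — to be the justification that a nonzero non-invertible endomorphism $\varphi$ really does yield both a proper nonzero subobject and a proper nonzero quotient simultaneously; this rests on the finite-dimensionality of $B$ (so that $\image\varphi$ cannot equal $B$ unless $\varphi$ is surjective, hence an isomorphism, and cannot be zero unless $\varphi = 0$). Everything else is immediate from the definition of $v$-stability and the additivity of dimension vectors in the Grothendieck group, so the proof should be quite short.
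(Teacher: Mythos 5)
Your argument is correct and is essentially the paper's own proof: both take a nonzero non-invertible endomorphism, observe that its image is simultaneously a nonzero proper quotient and a nonzero proper subobject of $B$, and derive the contradictory strict inequalities $\langle v, \bdim \image\varphi\rangle > 0$ and $\langle v, \bdim \image\varphi\rangle < 0$ from $v$-stability. Your extra remark justifying, via finite length, that a nonzero non-isomorphism has nonzero kernel and proper image is a minor elaboration the paper leaves implicit.
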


\begin{proof} Let $B$ be $v$-stable. Similar to the proof of the previous theorem, if we assume that $f: B \to B$ is not zero and not an isomorphism, then the following short exact sequences
\[ 0 \to \ker f \hookrightarrow B \twoheadrightarrow \image f \to 0, \]
\[ 0 \to \image f \hookrightarrow B \twoheadrightarrow \coker f \to 0 \]
imply
\[ \langle v, \bdim \image f \rangle > \langle v, \bdim B \rangle > \langle v, \bdim \image f \rangle,\]
which is a contradiction. Hence, $f$ is either 0 or an isomorphism, thus any morphism $f \in \End(B)$ is invertible and thus $\End_A(B)$ a division ring when $B$ is $v$-stable.
\end{proof}

We observe the following connection between $v$-stables and $v$-semistables.

\begin{lem}
The (relatively) simple modules in $\mods_v^{ss} A$ are exactly the $v$-stable objects.
\end{lem}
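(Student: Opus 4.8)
The plan is to unwind what ``relatively simple'' means and then check the two directions directly. Since the previous theorem tells us that $\mods_v^{ss}A$ is a wide subcategory of $\mods A$, it is an abelian category, and a morphism $f\colon L\to M$ between $v$-semistable modules is a monomorphism in $\mods_v^{ss}A$ precisely when it is injective in $\mods A$ (its kernel computed in $\mods A$ again lies in $\mods_v^{ss}A$, and vanishes iff $f$ is injective). Hence the subobjects of $M$ in $\mods_v^{ss}A$ are exactly the submodules $L\subseteq M$ that happen to be $v$-semistable, and $M$ is simple in $\mods_v^{ss}A$ iff $M\neq 0$ and no nonzero proper submodule of $M$ is $v$-semistable. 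I will also record at the outset that a $v$-stable module is in particular $v$-semistable (the strict inequalities imply the weak ones, and $\langle v,\bdim M\rangle=0$ is common to both definitions), so that it makes sense to ask whether it is relatively simple; throughout, ``simple'' and ``stable'' are understood to mean nonzero.

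For the implication from $v$-stability to relative simplicity, I would take a $v$-stable module $M$ (so $M\neq 0$) and an arbitrary nonzero subobject $L$ of $M$ in $\mods_v^{ss}A$. Being $v$-semistable, $L$ satisfies $\langle v,\bdim L\rangle=0$. If $L$ were a proper submodule of $M$, the strict inequality in the definition of $v$-stability would give $\langle v,\bdim L\rangle<0$, a contradiction; hence $L=M$, and $M$ is simple in $\mods_v^{ss}A$.

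For the converse I would argue by contraposition: suppose $M$ is $v$-semistable but not $v$-stable. Then there is a nonzero proper submodule $L\subsetneq M$ for which the stability inequality fails, i.e. $\langle v,\bdim L\rangle\geq 0$; combined with the $v$-semistability of $M$ (which forces $\langle v,\bdim L\rangle\leq 0$) this gives $\langle v,\bdim L\rangle=0$. The key observation is that such an $L$ is automatically $v$-semistable: it satisfies $\langle v,\bdim L\rangle=0$, and any nonzero proper submodule of $L$ is also a nonzero proper submodule of $M$, hence has non-positive pairing with $v$ by the $v$-semistability of $M$. Thus $L$ is a nonzero proper subobject of $M$ inside $\mods_v^{ss}A$, so $M$ is not simple there.

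I do not anticipate a real obstacle here; the only points requiring care are the bookkeeping of what a subobject in the wide (hence abelian) subcategory $\mods_v^{ss}A$ is, and the mild convention that simple and stable objects are nonzero. The actual content of the argument is the single remark in the converse direction: a nonzero proper submodule of a $v$-semistable module on which $v$ pairs to zero is forced to be itself $v$-semistable.
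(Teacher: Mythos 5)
Your proof is correct and follows the same basic route as the paper. The forward direction is identical: a nonzero proper $v$-semistable subobject would have to pair to zero with $v$, contradicting the strict inequality in the definition of $v$-stability. In the converse direction you are in fact more careful than the printed argument: the paper's text assumes that the relatively simple module $M$ ``does not have any nonzero proper submodules,'' which taken literally only covers modules that are simple in $\mods A$, whereas relative simplicity in $\mods_v^{ss}A$ only excludes $v$-\emph{semistable} proper submodules. Your contrapositive supplies exactly the observation that closes this gap: if $M$ is $v$-semistable but not $v$-stable, then some nonzero proper submodule $L$ satisfies $\langle v,\bdim L\rangle=0$, and such an $L$ is automatically $v$-semistable, since every submodule of $L$ is a submodule of $M$ and hence pairs non-positively with $v$; therefore $M$ is not simple in $\mods_v^{ss}A$. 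Your preliminary bookkeeping---that subobjects of $M$ in the wide (hence abelian) subcategory $\mods_v^{ss}A$ are precisely the $v$-semistable submodules of $M$---is also worth recording, as the paper leaves it implicit.
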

\begin{proof}
Let $M$ be a $v$-stable module, then $\langle v, \bdim L \rangle < 0$ for all nonzero proper submodules $L$ of $M$. Hence it cannot have nonzero $v$-semistable proper submodule $L$ which would have to satisfy $\langle v, \bdim L \rangle = 0$. Now assume that a $v$-semistable module $M$ does not have any nonzero proper submodules, then it trivially satisfies $\langle v, \bdim L \rangle < 0$ for all nonzero proper submodules $L$ and is therefore $v$-stable. 
\end{proof}

The following theorem by Rudakov \cite{Rudakov1997} is reminiscent of the ``Jordan-Hölder-Theorem'' for semistable modules because of the previous lemma.

\begin{thm} \cite[Theorem 3]{Rudakov1997}
Let $M \in \mods_v^{ss} A$. Then there exists a filtration
\[ 0 = M_0 \subset M_1 \subset \dots \subset M_t = M \]
such that $M_i / M_{i-1}$ is $v$-stable. Moreover, any two such filtrations have the same length and the multiset $\{ M_i /M_{i-1} \}$ is independent of the filtration chosen.
\end{thm}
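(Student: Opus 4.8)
The plan is to recognise the statement as the Jordan--H\"older theorem for the category $\mods_v^{ss} A$. By the previous theorem this is a wide subcategory of $\mods A$: being a full subcategory that contains the zero module and is closed under kernels, cokernels and extensions, it is itself an abelian category, and the inclusion $\mods_v^{ss}A \hookrightarrow \mods A$ is exact, so kernels, cokernels, images, subobjects and quotients computed in $\mods_v^{ss}A$ coincide with those computed in $\mods A$. Moreover every object of $\mods_v^{ss}A$ has finite length, because $\dim_K$ strictly decreases along any chain of proper subobjects, and by the preceding lemma the simple objects of $\mods_v^{ss}A$ are exactly the $v$-stable modules. Thus the theorem is precisely the Jordan--H\"older theorem in $\mods_v^{ss}A$, and what remains is to run the classical argument while checking that every module it produces stays $v$-semistable.

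For existence I would induct on $\dim_K M$. If $M=0$ the empty filtration works, and if $M$ is $v$-stable then $0 \subset M$ works. Otherwise, among the nonzero $v$-semistable submodules of $M$ (a nonempty family, since $M$ itself is one) pick one, $M_1$, of minimal $K$-dimension; minimality forces $M_1$ to have no nonzero proper $v$-semistable submodule, hence to be simple in $\mods_v^{ss}A$, that is, $v$-stable. Since $\mods_v^{ss}A$ is closed under cokernels, $M/M_1$ lies in $\mods_v^{ss}A$ and has strictly smaller dimension, so by induction it admits a filtration with $v$-stable subquotients; pulling this back along $M \twoheadrightarrow M/M_1$ and prepending $0 \subset M_1$ yields the desired filtration of $M$.

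For uniqueness I would show, by induction on $\dim_K M$, that any two filtrations with $v$-stable subquotients have the same length and the same multiset of subquotients, splitting into the usual two cases according to whether the bottom terms $M_1$ and $N_1$ agree. If $M_1 = N_1$ one passes to $M/M_1$ and applies the inductive hypothesis to the induced filtrations. If $M_1 \neq N_1$, the point is that $M_1 \cap N_1$ is the kernel of the composite $M_1 \hookrightarrow M \twoheadrightarrow M/N_1$, whose source and target are both $v$-semistable (the target being a quotient of $M$), so $M_1 \cap N_1$ is $v$-semistable; as a proper subobject of the simple object $M_1$ it is therefore $0$, whence $M_1 + N_1 \cong M_1 \oplus N_1$ is $v$-semistable as an extension of $N_1$ by $M_1$. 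Choosing a filtration with $v$-stable subquotients of $M/(M_1+N_1) \in \mods_v^{ss}A$ and lifting it to $M$ produces two composition series of $M$, one refining $0 \subset M_1 \subset M_1 + N_1 \subset \cdots$ and one refining $0 \subset N_1 \subset M_1 + N_1 \subset \cdots$, which agree above $M_1+N_1$ and both of whose first two subquotients form the multiset $\{M_1, N_1\}$; comparing each of the two given filtrations with the matching one of these (via the inductive hypothesis applied to $M/M_1$ and to $M/N_1$) closes the induction.

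I expect the only real subtlety to be exactly this bookkeeping: at every step of the classical argument one must verify that the module being introduced --- the minimal $v$-semistable submodule, the intersection $M_1 \cap N_1$, the sum $M_1+N_1$, the successive quotients --- is again $v$-semistable, so that the whole proof genuinely takes place inside the abelian category $\mods_v^{ss}A$. This is precisely what the wideness established in the earlier theorem provides; granting it, the argument is the standard Jordan--H\"older proof and presents no further difficulty.
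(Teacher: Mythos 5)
Your proposal is correct and takes essentially the same route as the paper: both treat the statement as the Jordan--H\"older theorem inside the wide (hence abelian, length) subcategory $\mods_v^{ss}A$ whose simples are the $v$-stable modules, the only cosmetic difference being that you split off a minimal $v$-stable submodule at the bottom while the paper splits off a $v$-stable quotient at the top (via a maximal submodule $L$ with $\langle v,\bdim L\rangle=0$), and you sketch the uniqueness half that the paper explicitly defers to the classical argument. One parenthetical to repair: $M/N_1$ is $v$-semistable not merely because it is a quotient of $M$ (quotients of semistables need not be semistable), but because $\langle v,\bdim N_1\rangle=0$, or equivalently because it is an iterated extension of the $v$-stable subquotients $N_i/N_{i-1}$ with $i\geq 2$.
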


\begin{proof}
    We only show the existence of such a filtration. Since the $v$-stable modules are relatively simple in $\mods_v^{ss}A$ the ``uniqueness'' of the filtration can be proven following arguments similar to those used to show the classical Jordan-Hölder Theorem. \\
    
    If $M$ is $v$-stable, then we may take the filtration $0 \subseteq M$ and we are done. 
    Assume that $M$ is $v$-semistable, but not $v$-stable, then there exists some nonzero proper submodule $L$ of $M$ such that $\langle v, \bdim L \rangle = 0$. Take $L$ to be a maximal submodule with that property, then $M/L$ satisfies
    \[ \langle v, \bdim M/L \rangle = \langle v, \bdim M \rangle - \langle v, \bdim L \rangle = 0.\]
    Moreover any submodule $N/L$ of $M/L$ corresponds to a submodule $N$ of $M$ containing $L$. Thus
    \[ \langle v, \bdim N/L \rangle = \underbrace{\langle v, \bdim N \rangle}_{< 0} - \underbrace{\langle v, \bdim L \rangle}_{=0} < 0.\]
    Then $M/L$ is $v$-stable. 
    If $L$ is $v$-stable, then we are done. Otherwise we repeat this process with $L$,
    which eventually ends since $\mods A$ is a length category.
\end{proof}

Baumann, Kamnitzer and Tingley \cite{BKT2011} showed that we can associate to every stability condition two torsion pairs in the following way. 

\begin{prop} \label{prop:TFalternatedef} \cite[Proposition 3.1]{BKT2011} Let $v \in \R^n$ be a stability conditon, then there exist torsion pairs $(\Tcal_v, \overline{\Fcal}_v)$ and $(\overline{\Tcal}_v, \Fcal_v)$ where
\begin{enumerate}
    \item $\Tcal_v = \{0 \} \cup \{ Y \in \mods A: \forall Y \to Z \to 0, \langle v, \bdim Z \rangle > 0  \}$,
    \item $\overline{\Tcal}_v = \{0 \} \cup \{ Y \in \mods A:\forall Y \to Z \to 0, \langle v, \bdim Z \rangle \geq 0 \}$,
    \item $\Fcal_v = \{ 0 \} \cup \{ Y \in \mods A: \forall 0 \to X \to Y ,  \langle v, \bdim X \rangle < 0\}$,
    \item $\overline{\Fcal}_v = \{ 0 \} \cup \{ Y \in \mods A:\forall 0 \to X \to Y, \langle v, \bdim X \rangle \leq 0 \}$.
\end{enumerate}
Moreover, $\mods_v^{ss}A = \overline{\Tcal}_v \cap \overline{\Fcal}_v$. 
\end{prop}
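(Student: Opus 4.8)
The plan is to verify, for each of the four subcategories, the two defining axioms of a torsion (or torsion-free) class using \Cref{prop:torsionIFF}, and then to identify the canonical short exact sequence. The key observation driving everything is additivity of $\langle v, \bdim -\rangle$ along short exact sequences, together with the two elementary facts that a quotient of a quotient is a quotient, and a subobject of a subobject is a subobject.

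First I would treat $\Tcal_v$. To see it is closed under quotients: if $Y \in \Tcal_v$ and $Y \twoheadrightarrow Y'$, then any quotient $Z$ of $Y'$ is a quotient of $Y$, so $\langle v, \bdim Z\rangle > 0$; hence $Y' \in \Tcal_v$ (the zero module being handled separately). For closure under extensions, suppose $0 \to X \to Y \to Z \to 0$ with $X, Z \in \Tcal_v$ and let $Y \twoheadrightarrow W$ with $W \neq 0$. Let $W' = \image(X \to W)$, so $W/W'$ is a quotient of $Z$ and $W'$ is a quotient of $X$. If $W' \neq 0$ then $\langle v,\bdim W'\rangle > 0$; if $W/W' \neq 0$ then $\langle v, \bdim W/W'\rangle > 0$; since $W \neq 0$ at least one of these holds, and the other term is $\geq 0$ by the non-strict membership criterion (or is zero), so $\langle v, \bdim W\rangle > 0$. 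Thus $Y \in \Tcal_v$. The argument for $\overline{\Tcal}_v$ is identical with all strict inequalities relaxed to non-strict ones, which actually makes the extension step cleaner since no case analysis on which summand is nonzero is needed. The statements for $\Fcal_v$ and $\overline{\Fcal}_v$ follow by the dual argument: closure under subobjects is immediate because a subobject of a subobject is a subobject, and closure under extensions uses that for $0 \to X \to Y \to Z \to 0$ and a subobject $L \hookrightarrow Y$, one has $L \cap X \hookrightarrow X$ and $L/(L\cap X) \hookrightarrow Z$.

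Next I would produce the canonical sequences. For the pair $(\Tcal_v, \overline{\Fcal}_v)$: given $M$, consider the poset of submodules $L$ with $M/L \in \Tcal_v$; one checks this is closed under intersection (using that $M/(L_1 \cap L_2)$ embeds in $M/L_1 \oplus M/L_2$ and $\Tcal_v$ is closed under subobjects... but $\Tcal_v$ need not be — instead I would argue directly that if $M/L_1, M/L_2 \in \Tcal_v$ then $M/(L_1\cap L_2) \in \Tcal_v$ via the submodule lattice, since $M/(L_1 \cap L_2) \twoheadrightarrow M/L_i$ and any common quotient pushes down). Taking $tM$ to be the smallest such $L$ gives $M/tM \in \overline{\Fcal}_v$ by maximality: any subobject $X \hookrightarrow M/tM$ with $\langle v, \bdim X\rangle > 0$ would contradict the choice of $tM$. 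This is the standard construction of the torsion radical. Alternatively — and more in the spirit of the excerpt — I would simply invoke \Cref{prop:torsionIFF}: having shown $\Tcal_v$ is closed under quotients and extensions, it is automatically a torsion class with torsion-free class $\Tcal_v^\perp$, and it remains only to check $\Tcal_v^\perp = \overline{\Fcal}_v$. The inclusion $\overline{\Fcal}_v \subseteq \Tcal_v^\perp$: if $F \in \overline{\Fcal}_v$ and $Y \in \Tcal_v$, any nonzero $f \colon Y \to F$ has $\image f$ a nonzero quotient of $Y$, so $\langle v, \bdim \image f\rangle > 0$, but $\image f$ is a subobject of $F$, so $\langle v, \bdim\image f\rangle \leq 0$, a contradiction. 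The reverse inclusion $\Tcal_v^\perp \subseteq \overline{\Fcal}_v$: if $F \notin \overline{\Fcal}_v$ there is a subobject $X \hookrightarrow F$ with $\langle v, \bdim X\rangle > 0$; replacing $X$ by a suitable quotient of itself one finds a nonzero $Y \in \Tcal_v$ mapping nontrivially to $F$, contradicting $F \in \Tcal_v^\perp$. Symmetrically $\Fcal_v$ is a torsion-free class with torsion class ${}^\perp\Fcal_v = \overline{\Tcal}_v$.

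Finally, for the last sentence, $\mods_v^{ss} A = \overline{\Tcal}_v \cap \overline{\Fcal}_v$ is essentially a tautology: $M$ is $v$-semistable iff $\langle v, \bdim M\rangle = 0$ and every subobject $L$ satisfies $\langle v, \bdim L\rangle \leq 0$ (equivalently every quotient $N$ satisfies $\langle v, \bdim N\rangle \geq 0$). The quotient condition together with $\langle v, \bdim M\rangle = 0$ is exactly $M \in \overline{\Tcal}_v$, and the subobject condition together with $\langle v, \bdim M\rangle = 0$ is exactly $M \in \overline{\Fcal}_v$; and given membership in both, the equality $\langle v, \bdim M\rangle = 0$ is forced by applying the defining inequality to the subobject $M \subseteq M$ from one side and the quotient $M \twoheadrightarrow M$ from the other.

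I expect the main obstacle to be the verification that $\Tcal_v$ (resp.\ $\overline{\Tcal}_v$) is closed under extensions with the \emph{strict} inequality, since one must carefully split an arbitrary quotient of the extension $Y$ into a quotient of the submodule part and a quotient of the quotient part and argue that strictness survives — this is exactly where the mismatch between the strict condition defining $\Tcal_v$ and the non-strict condition defining $\overline{\Tcal}_v$ must be reconciled (one genuinely needs to know that the modules in $\Tcal_v$ satisfy at least the non-strict inequalities, and that at least one of the two pieces of any nonzero quotient is nonzero). Everything else is a routine application of additivity of the pairing and \Cref{prop:torsionIFF}.
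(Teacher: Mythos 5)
Your main line coincides with the paper's proof: closure of $\Tcal_v$ and $\overline{\Tcal}_v$ under quotients (a quotient of a quotient is a quotient) and under extensions by splitting a nonzero quotient $W$ of the middle term into the image of $X$ and the induced quotient of $Z$; the dual argument for $\Fcal_v$ and $\overline{\Fcal}_v$; and Hom-orthogonality by factoring a map through its image. In the extension step you are in fact more careful than the paper, which simply declares both summands strictly positive without the case analysis on which piece is nonzero. Where you genuinely go further is after that: the paper contents itself with showing each class is a torsion or torsion-free class (via \cref{prop:torsionIFF}) plus mutual orthogonality, which strictly speaking only yields $\overline{\Fcal}_v \subseteq \Tcal_v^{\perp}$, and it never touches the ``moreover'' statement; you explicitly verify $\Tcal_v^{\perp} = \overline{\Fcal}_v$ (dually ${}^{\perp}\Fcal_v = \overline{\Tcal}_v$) and the equality $\mods_v^{ss}A = \overline{\Tcal}_v \cap \overline{\Fcal}_v$, which is exactly what is needed to obtain the canonical sequences for the stated pairs rather than for $(\Tcal_v, \Tcal_v^{\perp})$. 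Your route is therefore the more complete one, and your tautological argument for the semistable intersection is correct.

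Two repairs are needed in that supplementary part. First, the aborted torsion-radical sketch has the roles reversed: taking the smallest $L$ with $M/L \in \Tcal_v$ puts the cokernel in the torsion class, whereas the canonical sequence requires the largest submodule of $M$ lying in $\Tcal_v$; since you discard this route anyway, no harm is done. Second, in proving $\Tcal_v^{\perp} \subseteq \overline{\Fcal}_v$ the phrase ``replacing $X$ by a suitable quotient of itself'' fails as written, because a proper quotient of $X$ carries no natural map to $F$. The correct manoeuvre is a submodule argument: among submodules $X' \subseteq F$ choose one maximizing $\langle v, \bdim X' \rangle$ (only finitely many values occur, since the dimension vectors are bounded by $\bdim F$) and, among the maximizers, of minimal dimension. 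Any proper submodule $K \subsetneq X'$ then satisfies $\langle v, \bdim K \rangle < \langle v, \bdim X' \rangle$ (equality would contradict minimality of dimension, a larger value would contradict maximality), so every nonzero quotient of $X'$ pairs strictly positively and $X' \in \Tcal_v$; moreover $X' \neq 0$ because the maximum is at least $\langle v, \bdim X \rangle > 0$, and the inclusion $X' \hookrightarrow F$ contradicts $F \in \Tcal_v^{\perp}$. With these adjustments your proof is correct and strictly more detailed than the paper's.
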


\begin{proof} 
By definition $\Tcal_v$, $\overline{\Tcal}_v$, $\Fcal_v$ and $\overline{\Fcal}_v$ are full subcategories of $\mods A$. First, let us use \cref{prop:TFalternatedef} to show that each of these is a torsion or torsion-free class. \\

Take $T \in \Tcal_v$ and consider a quotient $T'$ of $T$. Then any quotient $T''$ of $T'$ is also a quotient of $T$ and hence satisfies $\langle v, \bdim T'' \rangle > 0$ as required. Thus $T' \in \Tcal_v$. Furthermore if $T$ and $T'$ are in $\Tcal_v$ consider the following extension
\begin{equation*}
\begin{tikzcd}
0 \arrow[r] & T \arrow[r,"f"] \arrow[d] & M \arrow[r,"g"] \arrow[d,"p"] & T' \arrow[r] \arrow[d] & 0 \\
0 \arrow[r] & \image (fp)\arrow[r,"h"] \arrow[d]& N \arrow[d] \arrow[r] & \coker h  \arrow[r] \arrow[d] & 0 \\
& 0  & 0 & 0
\end{tikzcd}
\end{equation*}
where $M$ is an extension of $T'$ by $T$ and every quotient $N$ of $M$ satisfies $\langle v, \bdim N \rangle = \langle v, \bdim \image (fp) \rangle + \langle v, \bdim \coker h \rangle > 0$ since the right-hand side terms are both quotients of elements in $\Tcal_v$. Therefore $M \in \Tcal_v$ and thus $\Tcal_v$ is a torsion-class. \\

It follows in the same manner that $\overline{\Tcal}_v$ is a torsion-class. To show that $\Fcal_v$ and $\overline{\Fcal}_v$ are torsion-free classes we notice that subobjects of subobjects are subobjects and use a diagram similar to \ref{eq:closedsubobj} to show that they are closed under extensions. \\

All that is left to show is that these pairs are $\Hom$-orthogonal. Take $v \in \R^n$ and consider $(\Tcal_v, \overline{\Fcal}_v)$. Assume there is a morphism $f:T \to F$, then it factors through its image
\[ T \twoheadrightarrow \image f \hookrightarrow F .\]
Then immediately by the definition of $\Tcal_v$ and $\overline{\Fcal}_v$ we obtain $\langle v, \bdim \image f \rangle > 0$ and $\langle v, \bdim \image f \rangle \leq 0$ which is a contradiction i.e. $f = 0$ and $\Hom_A(T,F) = 0$. Similarly for $(\overline{\Tcal}_v, \Fcal_v)$.
\end{proof}

Instead of asking which modules $M$ are $v$-semistable for a given $v \in \R^n$, let us now consider for which $v \in \R^n$ a module $M$ is $v$-semistable. 
It turns out that the collection of vectors (i.e. stability conditions) for which a module is semistable, called the \textit{stabiliy space}, forms a rich structure.
We now introduce the main object of study of these notes: the \textit{wall-and-chamber structure} of an algebra.

\begin{defn}
Fix a nonzero module $M \in \mods A$, then the \textit{stability space} $\Dcal(M)$ of $M$ is 
\[ \Dcal(M) = \{ v \in \R^n: M \text{ is $v$-semistable}\} \subseteq \R^n. \]
We say $\Dcal(M)$ is a \textit{wall} if $\codim \Dcal(M) = 1$. A \textit{chamber} is an open connected component of 
\[\R^n \setminus \overline{\bigcup_{\substack{M \in \mods A \\ 0 \neq M}} \Dcal(M)}.\]
\end{defn}

The combination of all stability spaces $\Dcal(M)$ for indecomposable modules $M$ and the corresponding chambers they define is called the \textit{wall-and-chamber structure}. Because of the following result it is sufficient to calculate the stability spaces of the indecomposable modules to obtain the whole wall-and-chamber structure.

\begin{prop}
    Let $M, N \in \mods A$, then $\Dcal(M \oplus N) \subseteq \Dcal(M) \cap \Dcal(N)$.
\end{prop}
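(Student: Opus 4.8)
The plan is to unpack the definition of the stability space directly. Suppose $v \in \Dcal(M \oplus N)$, so that $M \oplus N$ is $v$-semistable. The first thing to extract is the equality constraint: $\langle v, \bdim(M \oplus N) \rangle = 0$, and since $\bdim(M \oplus N) = \bdim M + \bdim N$, I want to conclude that both $\langle v, \bdim M \rangle = 0$ and $\langle v, \bdim N \rangle = 0$ separately. This is where the key observation comes in: $M$ is a subobject of $M \oplus N$ (via the canonical inclusion) and also a quotient of $M \oplus N$ (via the canonical projection), and symmetrically for $N$. Semistability of $M \oplus N$ applied to the subobject $M$ gives $\langle v, \bdim M \rangle \le 0$, and applied to the quotient $M$ gives $\langle v, \bdim M \rangle \ge 0$, hence $\langle v, \bdim M \rangle = 0$, and likewise for $N$.

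Next I would verify the subobject inequality for $M$ on its own. Let $L$ be a nonzero proper subobject of $M$. Then $L$ is also a subobject of $M \oplus N$ (compose the inclusion $L \hookrightarrow M$ with $M \hookrightarrow M \oplus N$), and it is a \emph{proper} subobject since $L \ne M$ forces $L \ne M \oplus N$ as long as $N \ne 0$; if $N = 0$ the statement is trivial, and if $M = 0$ there is nothing to check. Since $M \oplus N$ is $v$-semistable, $\langle v, \bdim L \rangle \le 0$. As $L$ was an arbitrary nonzero proper subobject of $M$, this shows $M$ is $v$-semistable, i.e. $v \in \Dcal(M)$. The identical argument with the roles of $M$ and $N$ exchanged gives $v \in \Dcal(N)$, so $v \in \Dcal(M) \cap \Dcal(N)$, completing the inclusion.

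I do not expect any serious obstacle here; the proof is essentially a direct consequence of the definition together with the fact that summands are simultaneously sub- and quotient objects. The one point requiring a little care is the edge case where $M$ or $N$ is zero (the proposition fixes $M \oplus N$ nonzero but individual summands could vanish), and the bookkeeping that a proper subobject of a summand remains proper in the direct sum, which I handled above by noting the other summand is nonzero. It is perhaps worth remarking in passing that the inclusion can be strict: a vector $v$ may make both $M$ and $N$ semistable while $M \oplus N$ fails to be semistable, because a subobject of $M \oplus N$ need not be contained in either summand, so there is no reason to expect equality in general.
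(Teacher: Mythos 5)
Your proof is correct and takes essentially the same route as the paper's: both use that each summand is simultaneously a subobject and a quotient of $M \oplus N$ to force $\langle v, \bdim M \rangle = \langle v, \bdim N \rangle = 0$, and then that subobjects of a summand are subobjects of the direct sum to get the remaining inequalities. Your extra care with the zero-summand and properness edge cases, and the remark that the inclusion can be strict, are fine additions but do not change the argument.
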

\begin{proof}
    Let $v \in \Dcal (M \oplus N)$, then by definition
    \[ 0 = \langle v, \bdim (M \oplus N) \rangle = \langle v, \bdim M + \bdim N \rangle= \langle v, \bdim M \rangle + \langle v, \bdim N \rangle.\]
    Since $M$ and $N$ are both subobjects and quotients of $M \oplus N$, the two terms on the right hand side of the equation above have to be equal to $0$. 
    Moreover, any submodule $L$ of $M$ or $N$ is also submodule of $M \oplus N$, therefore satisfies $\langle v, \bdim L \rangle \leq 0$. Hence $v \in \Dcal(M) \cap \Dcal(N)$.
\end{proof}

We illustrate this concept on the simple example of $A_2$. Since the quiver has 2 vertices, its wall-and-chamber structure lies in $\R^2$ and is easy to visualise.

\begin{exmp} \label{exmp:A2wac}
Let $A$ be the path algebra of the quiver $Q = \begin{tikzcd} 1 \arrow[r] & 2 \end{tikzcd}$. 
The Auslander-Reiten quiver of $A$ is given by
\[ \begin{tikzcd}[ampersand replacement=\&,column sep = 1.5em, row sep=2em]
\& {\tiny \begin{array}{cc} 1 \\2 \end{array}} \arrow[rd] \\
\tiny \begin{array}{c}  2 \end{array} \arrow[ru] \& \& \tiny \begin{array} {c} 1 \end{array} \arrow[ll,dashed, dash]
\end{tikzcd}\]
We compute the stability spaces of the indecomposable modules
\begin{align*}
    \Dcal(\tiny \begin{array}{c}  1 \end{array}) \normalsize &\coloneqq \{ v \in \R^2: \langle v, (1,0) \rangle = 0 \} = \{ (0,y):y \in \R\}, \\
    \Dcal(\tiny \begin{array}{c}  2 \end{array}) &\coloneqq \{ v \in \R^2: \langle v, (0,1) \rangle = 0 \} = \{ (x,0): x \in \R \}, \\
    \Dcal(\tiny{\begin{array}{cc} 1 \\2 \end{array}}\normalsize) &\coloneqq \{ v \in \R^2: \langle v, (1,1) \rangle = 0 \text{ and }\langle v,(0,1)\rangle \leq 0 \} = \{ (x,-x): 0 \leq x \in \R\}.
\end{align*}
where the last line contains two conditions, since in contrast to the two above there also exists a non-trivial submodule ${\tiny \begin{array}{c}  2 \end{array}} \hookrightarrow {\tiny \begin{array}{cc} 1 \\2 \end{array}}$. Therefore the wall-and-chamber structure of $A_2$ is the given in \cref{fig:wacA2}.
\begin{figure}[ht!]
\centering
\begin{tikzpicture}
  \draw[-] (-3, 0) -- (3, 0) node[right] {$\Dcal({\tiny \begin{array}{c}  2 \end{array}})$};
  \draw[-] (0, -3) -- (0, 3) node[above] {$\Dcal({\tiny \begin{array}{c}  1 \end{array}})$};
  \draw[-] (0,0) -- (2.5,-2.5) node[right] {$\Dcal(\tiny{\begin{array}{cc} 1 \\2 \end{array}}\normalsize)$};
\end{tikzpicture}
\caption{Wall-and-chamber structure of $A_2$}\label{fig:wacA2}
\end{figure}
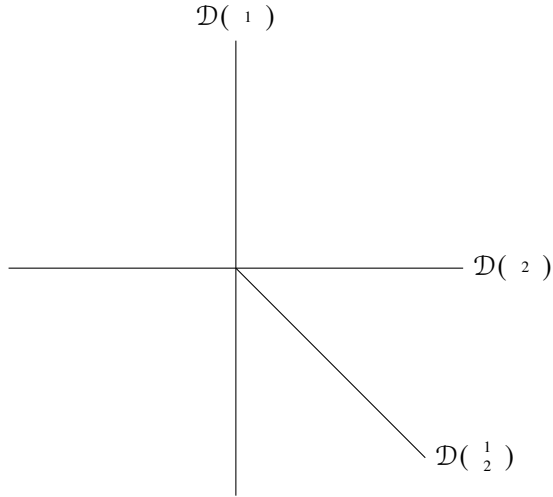
\end{exmp}

\begin{exmp}
Let $A= K
Q$ where $Q$ is the Kronecker quiver $Q= \begin{tikzcd} 1 \arrow[r,shift left] \arrow[r, shift right] & 2 \end{tikzcd}$. 
In this example we assume that $K$ is algebraically closed.
Its Auslander-Reiten quiver may be illustrated in the following way, where $\mathcal{R}$ corresponds to its regular components, see \cite[Section~VIII.2]{bluebookV1}.
\[ \begin{tikzcd}[ampersand replacement=\&,column sep = 1.5em, row sep=1em]
\& {\tiny \begin{array}{c} 1 \\22 \end{array}} \arrow[rdd, shift left]\arrow[rdd, shift right] \arrow[r,dash,dashed,end anchor={[xshift=1ex]east}]\& \phantom{1} \& \& \tau({\tiny \begin{array}{c}  1 \end{array}})\arrow[rdd, shift left]\arrow[rdd, shift right] \&  \& {\tiny \begin{array}{c}  1 \end{array}} \arrow[ll,dash,dashed]\\
\& \& \& \mathcal{R} \arrow[ru, dotted, dash]\\
{\tiny \begin{array}{c}  2 \end{array}} \arrow[ruu,shift left] \arrow[ruu,shift right] \&  \& \tau^{-1}({\tiny \begin{array}{c}  2 \end{array}}) \arrow[ru, dotted, dash]\arrow[ll,dashed, dash]  \&  \&\phantom{1} \& {\tiny \begin{array}{c} 11 \\2 \end{array}} \arrow[l, dash, dashed,end anchor={[xshift=-1ex]east}] \arrow[ruu, shift left] \arrow[ruu, shift right]
\end{tikzcd}\] 

These modules have the following dimension vectors:
\[ \bdim \tau^{-m}({\tiny \begin{array}{c}  2 \end{array}}) = \begin{pmatrix} 2m & 2m+1 \end{pmatrix} \quad \text{and} \quad \bdim \tau^{-m}({\tiny \begin{array}{c} 1 \\22 \end{array}}) = \begin{pmatrix} 2m+1 & 2m+2 \end{pmatrix},\]
\[ \bdim \tau^{m}({\tiny \begin{array}{c}  1 \end{array}}) = \begin{pmatrix} 2m+1 & 2m \end{pmatrix} \quad \text{and} \quad \bdim \tau^{-m}({\tiny \begin{array}{c} 11 \\2 \end{array}}) = \begin{pmatrix} 2m+2 & 2m+1 \end{pmatrix}. \]
Moreover, every indecomposable $\mathcal{R}$  is of the form $\tiny{\begin{array}{cc} 1 \\2 \end{array}}(d,\lambda) = \normalsize \{\begin{tikzcd} K^d \arrow[r,shift left,"J_d(\lambda)"] \arrow[r,shift right,"Id", swap] & K^d \end{tikzcd}\}$ where $d \in \mathbb{N}$, $\lambda \in \Pp^1(K)$ and $J_d(\lambda)$ is the Jordan block of size $d$ having $\lambda$ in the diagonal.
Therefore we have
\[ \Dcal({\tiny \begin{array}{c}  2 \end{array}}) = (x,0):x \in \R \} \quad \text{and} \quad \Dcal({\tiny\begin{array}{cc} 1 \end{array}}) = \{ (0,y):y \in \R\}\]
and for $m \geq 1$ we obtain
\begin{align*} \Dcal(\tau^{-m}({\tiny \begin{array}{c}  2 \end{array}})) &= \left\{ v \in \R^2: \bigg\langle v, \begin{bmatrix} 2m \\ 2m+1 \end{bmatrix} \right\rangle = 0, &\text{and } \left\langle v, \begin{bmatrix} 2i \\ 2i +1 \end{bmatrix}\right \rangle \leq 0 \quad \forall 0 \leq i < m,\text{ } \\
&&\text{and } \left\langle v, \begin{bmatrix} 2i+1 \\ 2i +2 \end{bmatrix}\right \rangle \leq 0 \quad \forall 0 \leq i < m \bigg\} \\
&= \{ ((2m+1)x,-2mx): 0 \leq x \in \R \} &
\end{align*}
Similarly,
\begin{align*}
    \Dcal(\tau^{-m}({\tiny \begin{array}{c} 1 \\22 \end{array}})) &= \{ (2(m+1)x,-(2m+1)x): 0 \leq x \in \R\}, \\
    \Dcal(\tau^m({\tiny \begin{array}{c}  1 \end{array}})) &= \{(2mx,-(2m+1)x):0 \leq x \in \R\}, \\
    \Dcal(\tau^{m}({\tiny \begin{array}{c} 11 \\2 \end{array}})) &= \{ ((2m+1)x,-2(m+1)x): 0 \leq x \in \R\}.\\
    \Dcal(\tiny{\begin{array}{cc} 1 \\2 \end{array}}(d,\lambda)) &= \{ ((x,-x): 0 \leq x \in \R\}.
\end{align*}
The wall-and-chamber structure is pictured in \cref{fig:wacKronecker}.
\begin{figure}[ht!]
\centering
\begin{tikzpicture}
  \draw[-] (-3, 0) -- (3, 0) node[right] {$\Dcal({\tiny \begin{array}{c} 2 \end{array}})$};
  \draw[-] (0, -3) -- (0, 3) node[above] {$\Dcal({\tiny \begin{array}{c} 1 \end{array}})$};
  \draw[-] (0,0) -- (3,-1.5) node[right] {$\Dcal(\tiny{\begin{array}{cc} 1 \\22 \end{array}}\normalsize)$};
  \draw[-] (0,0) -- (3,-2) node[right] {$\Dcal(\tau^{-1}({\tiny \begin{array}{c}  2 \end{array}}))$};
  \draw[-] (0,0) -- (3,-9/4);
  \draw[-] (0,0) -- (3,-12/5);
  \draw[-] (0,0) -- (3,-2.5);
  \draw[dotted] (0,0) -- (3,-3) node[below right] {$\Dcal(\tiny{\begin{array}{cc} 1 \\2 \end{array}}\normalsize(d,\lambda))$};
  \draw[-] (0,0) -- (2.5,-3);
  \draw[-] (0,0) -- (12/5,-3);
  \draw[-] (0,0) -- (9/4,-3);
  \draw[-] (0,0) -- (2,-3);
  \draw[-] (0,0) -- (1.5,-3) node[below] {$\Dcal(\tiny{\begin{array}{cc} 11 \\2 \end{array}}\normalsize)$};
\end{tikzpicture}
\caption{Wall-and-chamber structure of the Kronecker quiver}\label{fig:wacKronecker}
\end{figure}
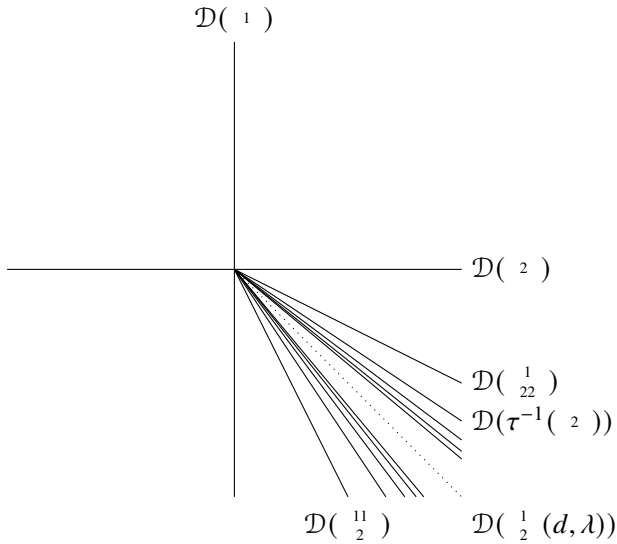
\end{exmp}

To conclude this section we calculate the wall-and-chamber structure of an algebra of rank 3 and demonstrate how we are still able to obtain an understandable image by using an stereographic projection.

\begin{exmp} \label{exmp:A3loop}
    Let $A$ be the path algebra over the quiver $Q = \begin{tikzcd} 1 \arrow[r] & 2 \arrow[r] & 3 \arrow[ll,bend right] \end{tikzcd}$ modulo the square of the arrow ideal. The Auslander-Reiten quiver is given by
\begin{equation*} \begin{tikzcd}[ampersand replacement=\&,column sep = 1em, row sep=2em]
\& \& {\tiny\begin{array}{cc} 2 \\ 3 \end{array}} \arrow[rd]  \& \& \&\&  {\tiny\begin{array}{cc} 3 \\ 1\end{array}} \\
\phantom{ }\& {\tiny \begin{array}{c} 3 \end{array}} \arrow[ru] \arrow[l, dashed,dash,end anchor={[xshift=1ex]west}]\& \& {\tiny \begin{array}{c} 2 \end{array}} \arrow[rd] \arrow[ll,dashed,dash]\& \& {\tiny \begin{array}{c} 1 \end{array}} \arrow[ru]\arrow[ll,dashed,dash] \arrow[r,dash,dashed, end anchor={[xshift=-1ex]east}]\& \phantom{ }  \\
{\tiny\begin{array}{cc} 3 \\1 \end{array}} \arrow[ru] \& \& \& \& {\tiny\begin{array}{cc} 1 \\ 2 \end{array}} \arrow[ru] 
\end{tikzcd}
\end{equation*}

Therefore we may calculate the stability spaces of indecomposables to obtain

\[ \Dcal({\tiny \begin{array}{c} 1 \end{array}})= \left\{ \begin{pmatrix} 0 \\ y \\ z \end{pmatrix}: y, z \in \R \right\}, \qquad  \Dcal({\tiny \begin{array}{c} 2 \end{array}})= \left\{ \begin{pmatrix} x \\ 0 \\ z \end{pmatrix} : x, z \in \R \right\},\]
\[\Dcal({\tiny \begin{array}{c} 3 \end{array}})= \left\{ \begin{pmatrix} x \\ y \\ 0 \end{pmatrix} : x, y \in \R \right\}, \qquad  \Dcal({\tiny \begin{array}{cc} 1 \\2 \end{array}})= \left\{ \begin{pmatrix} x \\ -x \\ z \end{pmatrix} : x \geq 0, z \in \R \right\},\]
\[\Dcal({\tiny \begin{array}{cc} 2 \\3 \end{array}})= \left\{ \begin{pmatrix} x\\y \\ -y \end{pmatrix} : y \geq 0, x \in \R \right\}, \qquad \Dcal({\tiny \begin{array}{cc} 3 \\1 \end{array}})= \left\{ \begin{pmatrix} -x\\y \\ x \end{pmatrix} : x \geq 0, y \in \R \right\}.\]

We may visualise these in $\R^3$ in the following way:

\[
    \begin{tikzpicture}[thick,scale=1.5]
    \draw[thin]  (1,-2.25) -- (-0.075,-0.5);
    \draw[]  (-2,-0.5) -- (-1,-2.25) -- (3,-2.25) -- (2.275,-0.98125);
    \draw[fill=white]   (2,-0.5) -- (3,-1) -- (-1,-1) -- (-3,0) -- (-2,0);
    \draw[]  (-1,1.15) -- (-1,1.5) -- (1,0.75) -- (1,-2.25);
    \draw[]  (-1.28,-1.75) -- (-2,-1.75) -- (-2,1.15) -- (2,1.15) -- (2,-0.5);
    \draw[]  (-0.075,1.15) -- (1, 1.5) -- (1, 1.15);
    \draw[]  (1,-1) -- (-1,0.75) -- (-2,1.15) -- (-3,1.65) -- (-2,0.825);
    \draw[thin]  (-0.075,-0.07) -- (-0.075,1.15);
    \draw[thin]  (-0.075,-0.5) -- (-2,-0.5);
    \draw[thin]  (-0.075,-0.5) -- (1,-1);
    \draw[thin]  (1,-0.5) -- (2,-0.5);
    \draw[thin]  (-0.075,-0.5) -- (-2,1.15);
    \filldraw[black] (-3,1.7) circle (0pt) node[left]{$\Dcal({\tiny \begin{array}{c} 3\\1 \end{array}})$};
    \filldraw[black] (3,-2.2) circle (0pt) node[right]{$\Dcal({\tiny \begin{array}{c} 2 \\ 3 \end{array}})$};
    \filldraw[black] (1.5,1.5) circle (0pt) node[above]{$\Dcal({\tiny \begin{array}{c} 1 \\ 2 \end{array}})$};
    \filldraw[black] (2.1,1) circle (0pt) node[right]{$\Dcal({\tiny \begin{array}{c} 2 \end{array}})$};
    \filldraw[black] (3,-1) circle (0pt) node[right]{$\Dcal({\tiny \begin{array}{c} 3 \end{array}})$};
    \filldraw[black] (-1,1.5) circle (0pt) node[above]{$\Dcal({\tiny \begin{array}{c} 1 \end{array}})$};
    \end{tikzpicture}
    \]

To get a better understanding we perform a stereographic projection, i.e. we first take the intersection of $\bigcup_{\substack{M \in \mods A\\0 \neq M}}\Dcal(M)$ with the unit sphere centered at the origin to obtain \cref{fig:wacintersect}.
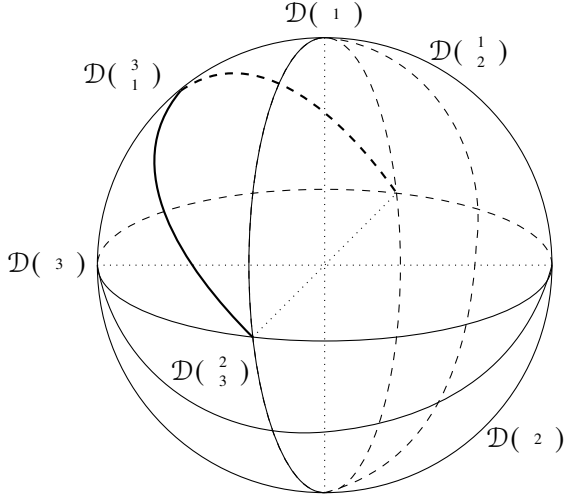
\begin{figure}[ht!]
\centering
    \begin{tikzpicture}
    \draw[color=black] (-3,0) arc
    [
        start angle=180,
        end angle=360,
        x radius=3cm,
        y radius =1cm
    ];
    \draw[dashed, color=black] (3,0) arc
    [
        start angle=0,
        end angle=180,
        x radius=3cm,
        y radius =1cm
    ];
    \draw[color=black] (0,3) arc
    [
        start angle=90,
        end angle=270,
        x radius=1cm,
        y radius =3cm
    ];
    \draw[dashed, color=black] (0,-3) arc
    [
        start angle=270,
        end angle=-90,
        x radius=1cm,
        y radius =3cm
    ];
    \draw[color=black] (0,0) circle [radius=3];
    \draw [black] (3,0) to[out=-105,in=5] (0,-2.2) to[out=-175,in=-80] (-3,0);
    \draw [black,dashed] (0,3) to[out=-5,in=82] (2,0.25) to[out=-93,in=15] (0,-3);
    \draw [black,thick ] (-0.95,-0.95) to[out=135,in=-130] (-1.9,2.3);
    \draw [black,thick, dashed] (-1.9,2.3) to[out=35,in=125] (0.95,0.95);
    \draw[black,dotted] (3,0)--(-3,0);
    \draw[black,dotted] (0,3)--(0,-3);
    \draw[black,dotted] (-0.95,-0.95)--(0.95,0.95);
    \filldraw[black] (-3,0) node[left]{$\Dcal({\tiny \begin{array}{c} 3 \end{array}})$};
    \filldraw[black] (2,-2.3) node[right]{$\Dcal({\tiny \begin{array}{c} 2 \end{array}})$};
    \filldraw[black] (0,3) node[above]{$\Dcal({\tiny \begin{array}{c} 1 \end{array}})$};
    \filldraw[black] (1.25,2.8) node[right] {$\Dcal({\tiny \begin{array}{cc} 1 \\2 \end{array}})$};
    \filldraw[black] (-1.5,-1.8) node[above] {$\Dcal({\tiny \begin{array}{cc} 2 \\3 \end{array}})$};
    \filldraw[black] (-2,2.5) node[left] {$\Dcal({\tiny \begin{array}{cc} 3 \\1 \end{array}})$};
    \end{tikzpicture}
    \caption{Intersection of the wall-and-chamber stucture with the unit sphere}\label{fig:wacintersect}
    \end{figure}
    If we project from the point $(1,1,1)$ we obtain \cref{fig:steproj1}.
    \begin{figure}[ht!]
    \centering
    \begin{tikzpicture}[thick,scale=0.8, every node/.style={transform shape}]
        \draw [black] (-4,-3.4641) arc [
            start angle=-145,
            end angle=-335,
            x radius=3.47cm,
            y radius=3.47cm
        ];
        \draw [black] (4,-3.4641) arc [
            start angle=-25,
            end angle=-215,
            x radius=3.47cm,
            y radius=3.47cm
        ];
        \draw [black] (0,3.4641) arc [
            start angle=-265,
            end angle=-455,
            x radius=3.47cm,
            y radius =3.47cm
        ];
        \draw[color=black,thick] (0,-3.46410161514) circle [radius=4];
        \draw[color=black,thick] (2,0) circle [radius=4];
        \draw[color=black,thick] (-2,0) circle [radius=4];
        \filldraw[black] (-6,0) node[left]{$\Dcal({\tiny \begin{array}{c} 3 \end{array}})$};
        \filldraw[black] (0,-7.5) node[below]{$\Dcal({\tiny \begin{array}{c} 2 \end{array}})$};
        \filldraw[black] (6.75,0) node[black]{$\Dcal({\tiny \begin{array}{c} 1 \end{array}})$};
        \filldraw[black] (0,-6) node[right] {$\Dcal({\tiny \begin{array}{cc} 1 \\2 \end{array}})$};
        \filldraw[black] (-2.5,2) node[left] {$\Dcal({\tiny \begin{array}{cc} 2 \\3 \end{array}})$};
        \filldraw[black] (3.5,1.5) node[right] {$\Dcal({\tiny \begin{array}{cc} 3 \\1 \end{array}})$};
    \end{tikzpicture}
    \caption{Stereographic projection of the wall-and-chamber structure}\label{fig:steproj1}
    \end{figure}
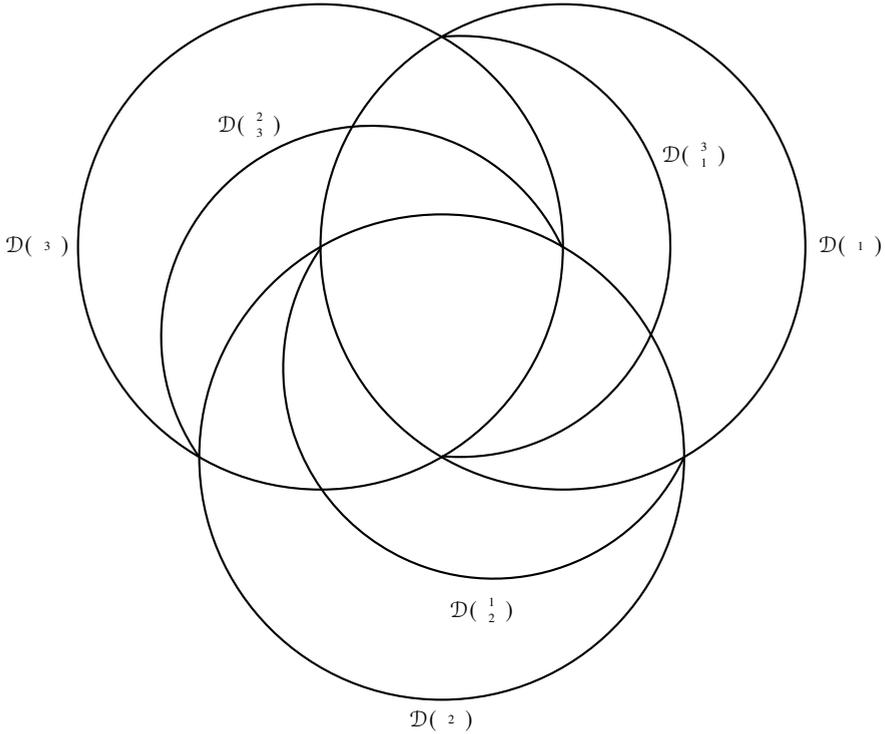
    Note that the exterior of \cref{fig:steproj1} is also a chamber and we find that the wall-and-chamber structure for $Q$ consists of 14 chambers. 
    \end{exmp}

\section{$\tau$-tilting theory}
The area of $\tau$-tilting theory is a recent development in the representation theory of finite dimensional algebras. 
The name is a combination of (classical) tilting theory and Auslander-Reiten theory, where $\tau$ represents the Auslander-Reiten translation. 
It was first introduced in the early 2010s by Adachi, Iyama and Reiten \cite{AIR2014} and has since become an active area of research. 
Many connections with other mathematical subfields have been established and continue to be discovered. 
This new theory can be viewed as a completion of classical tilting theory with respect to mutations. 
We begin by introducing some definitions.

\begin{defn} \cite{AIR2014}
Let $T,P \in \mods A$, where $P$ is projective. Then 
\begin{enumerate}
    \item $T$ is $\tau$\textit{-rigid} if $\Hom_A(T,\tau T)=0$; 
    \item $T$ is $\tau$\textit{-tilting} if it is $\tau$-rigid and $|T| = |A|$; 
    \item a pair $(T,P)$ is $\tau$\textit{-rigid} if $T$ is $\tau$-rigid  and $\Hom_A(P,T) =0$; 
    \item a $\tau$-rigid pair $(T,P)$ is $\tau$\textit{-tilting} if $|T|+|P|=|A|=n$.
\end{enumerate}
\end{defn}

Sometimes, the term \textit{support $\tau$-tilting module} is used to describe a $\tau$-rigid module $T$ which is part of a $\tau$-tilting pair $(T,P)$. For comparison, let us give the definition of a (classical) tilting module.

\begin{defn}
    Let $T \in \mods A$. Then $T$ is called \textit{tilting} if 
    \begin{enumerate}
        \item $\pd T \leq 1$, i.e the projective dimension of $T$ is less than or equal to 1, and
        \item $\Ext_A^1(T,T) = 0$, i.e. $T$ is \textit{rigid}, and
        \item[(3a)] there exists a short exact sequence $0 \to A \to T' \to T'' \to 0$, where $T', T'' \in \add T$.
    \end{enumerate}
\end{defn}

At first glance there may not seem to be many similarities between $\tau$-tilting modules and tilting modules. However, by \cite[Corollary VI.4.4]{bluebookV1}, the third condition is equivalent to the following:
\begin{enumerate}
    \item[(3b)] $|T| = |A|$.
\end{enumerate}

Moreover, \cite[Proposition 5.8]{AuslanderSmalo1980} states that $\Hom_A(T, \tau T)= 0$ implies $\Ext_A^1(T,T)=0$ and \cite[Corollary IV.2.14]{bluebookV1} implies that the converse holds when $\pd T \leq 1$. This means that, when the projective dimension of a module $M$ is at most one, a module $M$ is tilting if and only if it is $\tau$-tilting. Therefore, one can view $\tau$-tilting as a possible generalisation of tilting theory. 
The following result is sometimes called Skowroński's Lemma.

\begin{lem} \cite[Proposition 1.3]{AIR2014} \label{lem:rigidindecsummands}
If $(T,P)$ is $\tau$-rigid then $|T|+|P| \leq n$.
\end{lem}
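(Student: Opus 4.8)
The statement is a bound on the number of indecomposable summands of a $\tau$-rigid pair, so the plan is to attach to each summand an element of $K_0(\mathrm{proj}\,A)\cong\Z^n$ and show these elements are linearly independent. First I would reduce to the case $P=0$. Let $e\in A$ be the idempotent with $eA\cong P$ and $B=A/AeA$; since $\Hom_A(P,T)=0$ is equivalent to $Te=0$, the module $T$ is a $B$-module, and one checks that it is $\tau_B$-rigid by comparing the minimal projective presentations of $T$ over $A$ and over $B$ (their tops agree because $Te=0$) and reading off that $\Hom_B(T,\tau_B T)$ is a quotient of $\Hom_A(T,\tau_A T)=0$. This comparison between $\tau_A$ and $\tau_B$ is the one fiddly point of the reduction, and it is exactly the one-step case of $\tau$-tilting reduction. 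Granting it, $|B|=n-|P|$, and if the $P=0$ case yields $|T|\le|B|$ we obtain $|T|+|P|\le n$.

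For $P=0$: let $T=\bigoplus_{i=1}^{s}T_i$ be $\tau$-rigid with the $T_i$ pairwise non-isomorphic indecomposables, fix minimal projective presentations $P_1^{i}\xrightarrow{f_i}P_0^{i}\to T_i\to 0$, and set $g_{T_i}=[P_0^{i}]-[P_1^{i}]\in K_0(\mathrm{proj}\,A)$. The key tool is the Auslander–Reiten–Smalø identity: for any $N$ with minimal projective presentation $Q_1\xrightarrow{f}Q_0\to N\to 0$ and any $M\in\mods A$,
\[
\dim_K\Hom_A(Q_0,M)-\dim_K\Hom_A(Q_1,M)=\dim_K\Hom_A(N,M)-\dim_K\Hom_A(M,\tau N),
\]
which follows from applying $\Hom_A(-,M)$ to the presentation together with $\Hom_A(M,\tau N)\cong D\coker\bigl(\Hom_A(Q_0,M)\to\Hom_A(Q_1,M)\bigr)$, the latter coming from $\tau N=D\operatorname{Tr}N$ and $M\otimes_A\Hom_A(Q,A)\cong\Hom_A(Q,M)$ for $Q$ projective. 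The left-hand side depends only on $[Q_0]-[Q_1]$ and is additive, so if $\sum_i a_ig_{T_i}=0$ in $K_0(\mathrm{proj}\,A)$ then $\sum_i a_i\bigl(\dim_K\Hom_A(T_i,M)-\dim_K\Hom_A(M,\tau T_i)\bigr)=0$ for every $M$.

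Now suppose $\sum_i a_ig_{T_i}=0$. Pairing with an arbitrary $M\in\Fac T$, the terms $\dim_K\Hom_A(M,\tau T_i)$ vanish because $\tau$-rigidity of $T$ forces $\Fac T\subseteq{}^{\perp}(\tau T)$; hence $\sum_i a_i\dim_K\Hom_A(T_i,M)=0$ for all $M\in\Fac T$. Writing $a_i=a_i^{+}-a_i^{-}$ with $a_i^{\pm}\ge 0$ of disjoint support and putting $U=\bigoplus_i T_i^{a_i^{+}}$, $V=\bigoplus_i T_i^{a_i^{-}}$, this says $\dim_K\Hom_A(U,M)=\dim_K\Hom_A(V,M)$ for all $M\in\Fac(U\oplus V)$, while $g_U=g_V$ and $U,V$ have no common indecomposable summand. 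Testing with $M=\operatorname{top}U$ and $M=\operatorname{top}V$ (both in $\Fac(U\oplus V)$) yields $\sum_i(m_i-n_i)^2d_i=0$, where $m_i,n_i$ are the multiplicities of $S(i)$ in the two tops and $d_i=\dim_K\End_A S(i)>0$; so $\operatorname{top}U\cong\operatorname{top}V$, hence $P_0(U)\cong P_0(V)$, and then $g_U=g_V$ forces $P_1(U)\cong P_1(V)$.

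The decisive step that remains is to deduce $U\cong V$ from this — equivalently, that a $\tau$-rigid module is determined by its $g$-vector — for then, since $U$ and $V$ share no indecomposable summand, both must vanish, so all $a_i=0$ and the $g_{T_i}$ are linearly independent, giving $s\le n$. I expect this to be the main obstacle. The cleanest route is to complete $U$ (and $V$) to a $\tau$-tilting module by a Bongartz-type argument and compare the resulting $g$-vector matrices, which lie in $GL_n(\Z)$; but one must be careful here, since in the usual logical development Bongartz completion comes after this lemma, so the safe self-contained route is instead the Auslander–Smalø argument bounding the number of $\Ext$-projectives of the functorially finite torsion class $\Fac T$. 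All the other steps above are routine homological bookkeeping.
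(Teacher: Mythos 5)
Your reduction to the case $P=0$ (pass to $B=A/\langle e\rangle$ and use that $T$ is $\tau_B$-rigid) is exactly the paper's first step, which it cites to \cite[Lemma 2.1b]{AIR2014}. The problem is the second half. Your linear-independence argument for the $g$-vectors gets as far as $\operatorname{top}U\cong\operatorname{top}V$, hence $P_0(U)\cong P_0(V)$ and $P_1(U)\cong P_1(V)$, and then stops at what you yourself call the decisive step: concluding $U\cong V$. That step is precisely the statement that a $\tau$-rigid module (here $U\oplus V\in\add T$ is $\tau$-rigid, so the hypothesis is met) is determined by its $g$-vector, i.e.\ \cref{thm:isogvecs} of these notes, due to Dehy--Keller/AIR/DIJ. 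It is a genuinely nontrivial theorem --- its proof needs a lifting/Fitting-type argument using $\Hom(V,\tau U)=0$ to promote the equality of presentations to an isomorphism of modules --- and it does not follow from the bookkeeping you have done. Since you neither prove it nor settle whether citing it is legitimate at this point of the development (you raise the circularity worry about Bongartz completion but leave it unresolved), the proof as written has a genuine hole exactly where the real content of Skowro\'nski's Lemma lies.

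Your proposed fallback, ``the Auslander--Smal\o{} argument bounding the number of Ext-projectives of the functorially finite torsion class $\Fac T$,'' is in fact the route the paper takes: after the reduction it simply invokes \cite[Theorem 5.10]{AuslanderSmalo1980} and \cite{Smalo1984} to get $|T|\le |B/\operatorname{ann}(T)|\le |B| = n-|P|$, the point being that over $B/\operatorname{ann}(T)$ the module $T$ is faithful and $\tau$-rigid, hence covered by the Auslander--Smal\o{}/Smal\o{} results. But you only gesture at this; you do not carry out (or cite precisely) the bound on Ext-projectives, so neither of your two suggested endgames is actually completed. To fix the write-up, either cite \cref{thm:isogvecs} explicitly and check it is not proved using this lemma (it is not, in the cited sources), or replace the whole $P=0$ analysis by the Auslander--Smal\o{} citation as the paper does.
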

\begin{proof}Let $e$ be an idempotent of $A$ such that $\add P = \add Ae$ and write $B= A/\langle e \rangle$. Then \cite[Lemma 2.1b]{AIR2014} implies that $\Hom_A(T, \tau T)=0$ if and only if $\Hom_{B} (T, \tau_{B} T)=0$. It follows from \cite[Theorem 5.10]{AuslanderSmalo1980} and from \cite{Smalo1984} that $|T| \leq |B/\textnormal{ann}(T)| \leq |B| = |A|-|P| = n -|P|$. 
\end{proof}

A natural question to ask is what the minimal torsion class containing a given module $M \in \mods A$ is. To answer this, we must first introduce some new notions. Recall, that for a module $M$ we denote by $\Fac M$ the full subcategory of $\mods A$ of all quotient modules of finite direct sums of copies of $M$. Now, if $\Xcal$ is a subcategory of $\mods A$, then the full subcategory $\Filt(\Xcal)$ is defined by
\[ \Filt(\Xcal) \coloneqq \{ X \in \mods A: \exists 0 = X_0 \subset X_1 \subset \dots \subset X_t = X \text{ such that } X_{i+1}/X_{i} \in \Xcal \}.\]

We are now able to answer the question above with the following well-known result first stated in \cite{DIJ2019} and proved in \cite{Thomas2021}.

\begin{prop}
    Let $M \in \mods A$, the minimal torsion class containing $M$ is $\Filt(\Fac M)$.
\end{prop}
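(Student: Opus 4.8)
The statement to prove is that for $M \in \mods A$, the smallest torsion class containing $M$ equals $\Filt(\Fac M)$. I would prove this in three movements: first show $\Filt(\Fac M)$ is a torsion class, second show it contains $M$, and third show it is contained in any torsion class containing $M$; the last point is what forces minimality.

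\textbf{Step 1: $\Filt(\Fac M)$ is a torsion class.} By \cref{prop:torsionIFF}, it suffices to check closure under quotients and extensions. Closure under extensions is immediate from the definition of $\Filt$: concatenating a $\Fac M$-filtration of a subobject with a $\Fac M$-filtration of the corresponding quotient gives a $\Fac M$-filtration of the extension. For closure under quotients, I would argue as follows. First observe that $\Fac M$ is itself closed under quotients (a quotient of a quotient of $M^p$ is a quotient of $M^p$). Now take $X \in \Filt(\Fac M)$ with filtration $0 = X_0 \subset X_1 \subset \cdots \subset X_t = X$ having $X_{i+1}/X_i \in \Fac M$, and let $X \twoheadrightarrow Y$ be a surjection with kernel $K$. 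Set $Y_i$ to be the image of $X_i$ in $Y$, equivalently $Y_i = (X_i + K)/K$. Then $0 = Y_0 \subseteq Y_1 \subseteq \cdots \subseteq Y_t = Y$, and each successive quotient $Y_{i+1}/Y_i$ is a quotient of $X_{i+1}/X_i \in \Fac M$ (there is a natural surjection $X_{i+1}/X_i \twoheadrightarrow Y_{i+1}/Y_i$ induced by $X_{i+1} \to Y_{i+1}$), hence lies in $\Fac M$ since $\Fac M$ is quotient-closed. After deleting repetitions from the chain we obtain a genuine $\Fac M$-filtration of $Y$, so $Y \in \Filt(\Fac M)$.

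\textbf{Step 2 and Step 3: membership and minimality.} Membership is trivial: $M \in \Fac M \subseteq \Filt(\Fac M)$ (take the one-step filtration $0 \subset M$). For minimality, let $\Tcal$ be any torsion class with $M \in \Tcal$. Since $\Tcal$ is closed under quotients and finite direct sums (the latter because it is closed under extensions, or simply as a subcategory closed under quotients of $M^p$ — more carefully, $M^p \in \Tcal$ follows from iterated extensions of $M$ by itself, using the split sequence $0 \to M \to M^2 \to M \to 0$), every quotient of $M^p$ lies in $\Tcal$, i.e. $\Fac M \subseteq \Tcal$. Then, since $\Tcal$ is closed under extensions, an easy induction on the length $t$ of a $\Fac M$-filtration shows $\Filt(\Fac M) \subseteq \Tcal$: if $X$ has such a filtration of length $t$, then $X_{t-1} \in \Tcal$ by induction, $X/X_{t-1} \in \Fac M \subseteq \Tcal$, and $X$ is an extension of the latter by the former, so $X \in \Tcal$. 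Combining, $\Filt(\Fac M)$ is a torsion class containing $M$ and contained in every other such, hence it is the minimal one.

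\textbf{Main obstacle.} The only step requiring genuine care is closure of $\Filt(\Fac M)$ under quotients in Step 1 — specifically verifying that the induced filtration $Y_i = (X_i+K)/K$ has successive subquotients in $\Fac M$, which hinges on the fact that $\Fac M$ is already quotient-closed so that quotients of the original subquotients stay inside. Everything else (extension-closure, membership, the minimality induction) is formal bookkeeping with short exact sequences.
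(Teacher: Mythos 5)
Your proposal is correct and follows essentially the same route as the paper: prove $\Filt(\Fac M)$ is a torsion class via the induced filtration $(X_i+K)/K$ for quotients and concatenation of filtrations for extensions, then get minimality by observing that any torsion class containing $M$ contains $\Fac M$ and is closed under the iterated extensions defining $\Filt$. The only difference is that you spell out a few points the paper leaves implicit (quotient-closedness of $\Fac M$, why $\Fac M\subseteq\Tcal$, the induction on filtration length), which is harmless.
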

\begin{proof} Let us first show that $\Filt(\Fac M)$ is indeed a torsion class by showing it is closed under quotients and extensions. \\

Take $N \in \Filt (\Fac M)$ with composition series $0 = N_0 \subset N_1 \subset \dots \subset N_t = N$ such that $N_{i+1}/N_i \in \Fac M$. Consider a quotient of $N$ denoted by $N' \coloneqq N /L$. Then we obtain the following filtration of $N'$,
\[ 0 = N_0' \subset N_1' \subset \dots \subset N_r' = N',\]
where $N_i' \coloneqq (N_i + L)/L$. Thus 
\begin{align*}
N_{i+1}'/N_i' & = \left( ( N_{i+1} + L)/L \right) / \left( (N_i + L) / L \right) \\
& \cong (N_{i+1} +L) / (N_i + L),
\end{align*}
where we made use of the third isomorphism theorem to obtain the second line. Hence there is a well-defined homomorphism $N_{i+1} / N_i \to (N_{i+1} +L) / (N_i + L)$ given by $n + N_i \mapsto n + (N_i + L)$ which is surjective. Therefore $N_{i+1}'/N_i'$ is a quotient of $N_{i+1}/N_i$ and thus also an element of $\Fac M$. So $N' \in \Filt(\Fac M)$. \\

Now assume we have $N, N' \in \Filt(\Fac M)$, and consider the short exact sequence $0 \to N \to E \to N' \to 0$. Then $N' \cong E/N$ and submodules of $N'$ are in bijective correspondence with submodules of $E$ containing $N$. Let $N \subseteq L_i \subseteq E$ be the submodules corresponding to the filtration $0 = N_0' \subset N_1' \subset \dots \subset N_t' = N'$ of $N'$ such that $N_i' \cong L_i/N$. Then we can obtain a filtration for $E$ by taking
\[ 0 = N_0 \subset N_1 \subset \dots \subset N_s = N = L_0 \subset L_1 \subset \dots \subset L_t = E.\]
Clearly the composition factors of the filtration of $N$ are in $\Fac M$ by definition. And the components of the right half of the filtration satisfy
\begin{align*}
L_{i+1} / L_i  & \cong (L_{i+1} / N) / (L_i /N) \\
& \cong N_{i+1}'/N_i',
\end{align*}
which is in $\Fac M$ as it is a composition factor of the filtration of $N'$. Thus $E \in \Filt(\Fac M)$. This is the smallest torsion class because every element is an iterated extension of elements in $\Fac M$, which must be contained in every torsion class containing $M$.
\end{proof}

However, $\Fac(\Filt(\add T))$ need \textit{not} be a torsion class since it might not be closed under extensions. 

\subsection{Finiteness conditions on subcategories}

In this subsection we show a close connection between $\tau$-tilting theory and a particular type of torsion classes in $\mods A$. We begin by introducing two definitions.

\begin{defn}
    Let $\Xcal \subseteq \mods A$ be a full subcategory. Given a module $M \in \mods A$, a \textit{right $\Xcal$-approximation} of $M$ is a map $f_M: X_M \to M$ with $X_M \in \Xcal$ such that for any map $g: Y \to M$ with $Y \in \Xcal$, there is a map $g': Y \to X$ such that the following diagram commutes
    \[\begin{tikzcd} X_M \arrow[r, "f_M"] & M \\ 
    Y \arrow[ru, "\forall g", swap] \arrow[u, "\exists g'", dashed]
    \end{tikzcd}\]
    In other words, every map $g: Y \to M$ factors through the map $f_M: X_M \to M$. Dually, a \textit{left $\Xcal$-approximation} of $M$ is a map $g_M: M \to X_M$ with $X_M \in \Xcal$ such that for any map $h: M \to Y$ with $Y \in \Xcal$, there is a map $h': X_M \to Y$ such that the following diagram commutes
    \[\begin{tikzcd} M \arrow[dr, "\forall h"]\arrow[r, "g_M"] & X_M \arrow[d, "\exists h'", dashed]\\ 
    & Y 
    \end{tikzcd}\]
\end{defn}

\begin{defn}
    We say that a full subcategory $\Xcal \subseteq \mods A$ is \textit{contravariantly finite (in $\mods A$)} if every $M \in \mods A$ admits a right $\Xcal$-approximation. Dually $\Xcal$ is \textit{covariantly finite} if every $M \in \mods A$ admits a left $\Xcal$-approximation. The subcategory $\Xcal$ is called \textit{functorially finite} if it is both contravariantly and covariantly finite.
\end{defn}

It turns out that we have been studying a particular example of such subcategories.

\begin{prop}
    Every torsion class in $\mods A$ is contravariantly finite. 
\end{prop}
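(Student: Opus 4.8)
The plan is to show that for every $M \in \mods A$ the inclusion $tM \hookrightarrow M$ of the torsion object of $M$ is a right $\Tcal$-approximation, where $tM$ is taken with respect to the torsion pair $(\Tcal, \Tcal^\perp)$ supplied by \cref{prop:torsionIFF}. Once one has identified the candidate approximation, the verification is a single application of the universal property of kernels.

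First I would invoke \cref{prop:torsionIFF} to promote the torsion class $\Tcal$ to a genuine torsion pair $(\Tcal, \Fcal)$ with $\Fcal = \Tcal^\perp$. (Implicitly this is where finite-dimensionality of $A$ enters: $\mods A$ is a length category, so one may realise $tM$ as the largest submodule of $M$ lying in $\Tcal$ --- such a submodule exists by the ascending chain condition, and it is a genuine maximum because the submodules of $M$ belonging to $\Tcal$ are closed under sums, a sum $N_1+N_2$ being a quotient of $N_1 \oplus N_2 \in \Tcal$.) Fix $M \in \mods A$ and take its canonical short exact sequence $0 \to tM \xrightarrow{\iota} M \xrightarrow{\pi} fM \to 0$ with $tM \in \Tcal$ and $fM \in \Fcal$.

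Next I would verify the universal property. Let $g \colon Y \to M$ be an arbitrary morphism with $Y \in \Tcal$. The composite $\pi g \colon Y \to fM$ is a morphism from an object of $\Tcal$ to an object of $\Fcal = \Tcal^\perp$, hence $\pi g = 0$. Since $\iota$ is a kernel of $\pi$, the universal property of the kernel produces a (unique) morphism $g' \colon Y \to tM$ with $\iota g' = g$, which is exactly the factorisation required by the definition of a right $\Tcal$-approximation. Therefore $\iota \colon tM \to M$ is a right $\Tcal$-approximation of $M$, and $\Tcal$ is contravariantly finite.

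I do not expect a genuine obstacle here. The only point needing care is the existence of the torsion pair itself --- that $\Tcal$, a priori only closed under quotients and extensions, really is half of a torsion pair so that $tM$ and the canonical sequence are available --- and this is precisely \cref{prop:torsionIFF}, resting ultimately on $\mods A$ being a finite-length category. Everything after that is the one-line kernel argument above. (One should also note, although it is not part of this statement, that the dual assertion fails: torsion classes need not be covariantly finite, which is why functorial finiteness is the genuinely restrictive condition singled out in what follows.)
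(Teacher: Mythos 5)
Your proof is correct and is essentially the same as the paper's: both take the canonical short exact sequence $0 \to tM \to M \to fM \to 0$ of the torsion pair $(\Tcal, \Tcal^\perp)$ and observe that any map $Y \to M$ with $Y \in \Tcal$ composes to zero with $M \to fM$, hence factors through $tM = \ker(M \to fM)$, so $tM \to M$ is a right $\Tcal$-approximation. Your extra remarks on the existence of $tM$ are fine but not needed beyond citing the torsion-pair characterisation.
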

\begin{proof}
    Let $\Tcal$ be a torsion class such that $\Fcal = \Tcal^\perp$ is the torsion-free class. Take any module $M \in \mods A$. Then by the definition of torsion pairs, there exists a canonical short exact sequence
    \[ 0 \to tM \xrightarrow{f_M} M \xrightarrow{g_M} fM \to 0\]
    where $tM \in \Tcal$. For any $T \in \Tcal$ consider the morphism $h: T \to M$, then since $fM \in \Fcal$, $g_M \circ h = 0$. So, $\image h \in \ker {g_M} = tM$. Hence $h$ factors through $tM$ like desired.
\end{proof}

Dually, one can show that any torsion-free class in $\mods A$ is covariantly finite. The following theorem by Auslander and Smalø \cite{AuslanderSmalo1980} lets us characterise the modules $T \in \mods A$ such that $\Fac T$ is a (functorially finite) torsion class. 

\begin{thm} \cite[Theorem 5.10]{AuslanderSmalo1980} 
    Let $T \in \mods A$. Then $\Fac T$ is a torsion class if and only if $T$ is $\tau$-rigid. In this case, $\Fac T$ is functorially finite. Moreover, every functorially finite torsion class arises this way.
\end{thm}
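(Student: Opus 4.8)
The plan is to split the equivalence into its two implications and then deal separately with the two finiteness assertions, the recurring tool being the Auslander--Reiten formula $\Ext^1_A(T,X)\cong D\,\overline{\Hom}_A(X,\tau T)$, where $\overline{\Hom}_A(-,-)$ denotes homomorphisms modulo those factoring through an injective module. First I would record the harmless observation that $\Fac T$ is always closed under quotients, so by \cref{prop:torsionIFF} the statement ``$\Fac T$ is a torsion class'' is equivalent to ``$\Fac T$ is closed under extensions''; the task thus reduces to comparing extension-closure of $\Fac T$ with $\tau$-rigidity of $T$, and then, in the positive case, to identifying the finiteness conditions.

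For the implication ``$T$ $\tau$-rigid $\Rightarrow$ $\Fac T$ a torsion class'' I would argue in two steps. First, upgrade $\Hom_A(T,\tau T)=0$ to $\Ext^1_A(T,X)=0$ for every $X\in\Fac T$: picking an epimorphism $T^k\twoheadrightarrow X$, left exactness of $\Hom_A(-,\tau T)$ gives an inclusion $\Hom_A(X,\tau T)\hookrightarrow\Hom_A(T^k,\tau T)=\Hom_A(T,\tau T)^k=0$, whence $\overline{\Hom}_A(X,\tau T)=0$ and the Auslander--Reiten formula forces $\Ext^1_A(T,X)=0$. Second, take a short exact sequence $0\to X\to E\xrightarrow{\pi}Y\to 0$ with $X,Y\in\Fac T$ and an epimorphism $p\colon T^a\twoheadrightarrow Y$; since $\Ext^1_A(T^a,X)=0$, applying $\Hom_A(T^a,-)$ shows $p$ factors as $\pi\tilde p$ for some $\tilde p\colon T^a\to E$. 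Pairing $\tilde p$ with $j\circ q\colon T^b\to X\hookrightarrow E$ for an epimorphism $q\colon T^b\twoheadrightarrow X$, the map $(\tilde p,jq)\colon T^{a+b}\to E$ has image both surjecting onto $Y$ and containing $X=\ker\pi$, hence equal to $E$; so $E\in\Fac T$ and $\Fac T$ is closed under extensions, hence a torsion class.

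For the converse, assume $\Fac T$ is a torsion class, i.e.\ closed under extensions, and aim at $\Hom_A(T,\tau T)=0$; by the Auslander--Reiten formula it suffices to show $\Ext^1_A(T,M)=0$ for $M$ running through $\Fac T$ (plus a short argument passing from $\overline{\Hom}_A$ back to $\Hom_A$). The extension-closure of $\Fac T$ enters by forcing, for any $M\in\Fac T$, the middle term of an extension $0\to M\to E\to T\to 0$ back into $\Fac T$ and then, via a minimal epimorphism $T^n\twoheadrightarrow E$, controlling the sequence. I expect this to be the main obstacle: it is the technical heart of \cite[Theorem~5.10]{AuslanderSmalo1980}, and I would follow that reference for the bookkeeping rather than reprove it from scratch.

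Finally, the finiteness assertions. Once $\Fac T$ is known to be a torsion class it is contravariantly finite by the Proposition that every torsion class is contravariantly finite (the right $\Fac T$-approximation of $M$ being the inclusion $tM\hookrightarrow M$ from the canonical sequence of the torsion pair $(\Fac T,(\Fac T)^{\perp})$); covariant finiteness holds because $\add T$ has only finitely many indecomposable objects and is therefore trivially functorially finite, a property that transfers to $\Fac T=\Fac(\add T)$ by \cite{AuslanderSmalo1980}. For the last sentence, given an arbitrary functorially finite torsion class $\Tcal$, I would pick a left $\Tcal$-approximation $f\colon A\to T_{\Tcal}$ of the regular module $A$: then $\Fac T_{\Tcal}\subseteq\Tcal$ since $\Tcal$ is closed under quotients, while any $X\in\Tcal$ receives an epimorphism $A^m\twoheadrightarrow X$ that factors through $T_{\Tcal}^m$, so $X\in\Fac T_{\Tcal}$; hence $\Tcal=\Fac T_{\Tcal}$, and $T_{\Tcal}$ is $\tau$-rigid by the equivalence already established. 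This completes the plan.
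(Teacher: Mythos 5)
Your forward direction ($T$ $\tau$-rigid $\Rightarrow$ $\Fac T$ extension-closed, via the Auslander--Reiten formula and the lifting of an epimorphism $T^a\twoheadrightarrow Y$ through $E$) is correct and standard, as is the identification $\Tcal=\Fac T_{\Tcal}$ for a left $\Tcal$-approximation $T_{\Tcal}$ of $A$. The genuine gaps are exactly at the two points carrying the content of the theorem, and there your plan would not go through. For the converse you propose to deduce $\Hom_A(T,\tau T)=0$ by showing $\Ext^1_A(T,M)=0$ for all $M\in\Fac T$, using only extension-closure of $\Fac T$; but extension-closure does not force $T$ to be Ext-projective in $\Fac T$. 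Concretely, take $T=A\oplus M$ with $M$ any non-$\tau$-rigid module: then $\Fac T=\mods A$ is a (functorially finite) torsion class, yet $T$ is not $\tau$-rigid and $\Ext^1_A(T,-)$ does not vanish on $\Fac T$. So the implication in this literal form fails, your intermediate target is unreachable from the stated hypothesis, and deferring ``the technical heart'' to Auslander--Smal\o\ means this half is simply not proved; their Theorem 5.10 is the equivalence $\Hom_A(T,\tau T)=0\iff \Ext^1_A(T,\Fac T)=0$ (Ext-projectivity of $T$ in $\Fac T$), and the usable content of the present statement is the existence part: every functorially finite torsion class equals $\Fac T$ for \emph{some} $\tau$-rigid $T$. (The paper itself gives no proof here; it cites that reference.)

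This gap propagates into your last step: having shown $\Tcal=\Fac T_{\Tcal}$, you conclude that $T_{\Tcal}$ is $\tau$-rigid ``by the equivalence already established'' --- but that is precisely the unproved (and, as stated, false) direction, and with a non-minimal approximation it genuinely fails: $A\to A\oplus M$ as above is a left $\mods A$-approximation of $A$ with $A\oplus M$ not $\tau$-rigid. The correct argument takes the \emph{minimal} left $\Tcal$-approximation of $A$ and proves directly that it is Ext-projective in $\Tcal$, then invokes the equivalence with $\Hom_A(-,\tau-)=0$. Likewise, covariant finiteness of $\Fac T$ does not ``transfer'' from functorial finiteness of $\add T$; no such transfer principle exists in the sources cited, and this is the substance of Smal\o's theorem: one must construct left $\Fac T$-approximations (via universal extensions, using $\Ext^1_A(T,-)$ and extension-closure) or cite that result precisely. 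Your contravariant-finiteness argument via the canonical sequence of the torsion pair is fine.
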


The previous classification of functorially finite torsion classes was refined by Adachi, Iyama and Reiten in \cite{AIR2014} using the notion of $\tau$-tilting pair. This is considered as one of the more fundamental results in $\tau$-tilting theory.

\begin{thm} \cite[Theorem 2.7]{AIR2014} 
    There is a one-to-one correspondence
    \begin{align*}
         \{ \text{$\tau$-tilting pairs} \} &\longleftrightarrow \{ \text{functorially finite torsion classes} \}. \\
        (T,P) &\longmapsto\quad \Fac T 
    \end{align*}
\end{thm}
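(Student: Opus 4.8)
The plan is to exhibit the inverse map and check both compositions give the identity, leaning heavily on the two previous theorems (Auslander–Smalø's characterisation of functorially finite torsion classes as $\Fac T$ for $\tau$-rigid $T$, and Skowroński's Lemma). First I would set up the map in the other direction: given a functorially finite torsion class $\Tcal$, by the Auslander–Smalø theorem there is a $\tau$-rigid module $T$ with $\Tcal = \Fac T$; I may and do take $T$ to be basic and to be the direct sum of all indecomposable Ext-projective objects of $\Tcal$, i.e. the indecomposable objects $X \in \Tcal$ with $\Ext^1_A(X, \Tcal) = 0$ (equivalently, the $\add$-generator of $\Tcal$ with no superfluous summands). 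This makes $T$ canonically determined by $\Tcal$. The candidate inverse is $\Tcal \mapsto (T, P)$, where $P$ is the basic projective module with $\add P = \add\{P(i) : \Hom_A(P(i), \Tcal) = 0\}$, equivalently $P(i)$ is a summand of $P$ exactly when $S(i) \notin \Tcal$ (here one uses that $\Tcal$ is a torsion class, so closed under quotients, to see this is the right "support" condition).

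The first thing to verify is that $(T,P)$ produced this way is a $\tau$-tilting pair, not merely a $\tau$-rigid one. That $T$ is $\tau$-rigid is immediate from Auslander–Smalø. That $\Hom_A(P, T) = 0$: if $S(i) \notin \Tcal$ then no composition factor-top argument — rather, any nonzero map $P(i) \to T$ would have image a nonzero quotient of $P(i)$ lying in $\Tcal$ (since $\Tcal = \Fac T$ is closed under quotients and $T \in \Tcal$), whose top is $S(i)$, forcing $S(i)$ into $\Tcal$ as a quotient of that image — contradiction. So $(T,P)$ is $\tau$-rigid, hence by Skowroński's Lemma $|T| + |P| \le n$. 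The crux is the reverse inequality $|T| + |P| \ge n$, which is exactly the statement that $\Tcal$ has "enough" Ext-projectives together with "enough" vanishing simples; I would prove this by induction on $n$ or by a counting argument comparing, for a functorially finite torsion class, the number of indecomposable Ext-projectives of $\Tcal$ with the number of indecomposable Ext-projectives of the torsion-free class $\Fcal = \Tcal^\perp$ (which is covariantly finite, as remarked after the previous proposition), using that these two numbers sum to $n$. This bijectivity-of-counts statement is the genuine content of the theorem and is the main obstacle; the honest approach is to reduce to $B = A/\langle e\rangle$ where $\add Ae = \add P$, observe $\Tcal$ becomes a \emph{sincere} functorially finite torsion class in $\mods B$ (no simple $B$-module is absent), and show a sincere functorially finite torsion class in $\mods B$ has exactly $|B|$ indecomposable Ext-projectives — this last is the heart of \cite[Theorem 2.7]{AIR2014} and I would either cite the key lemma from there or reprove it via the exchange sequences that mutation provides.

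Finally I would check the two round trips. Starting from a $\tau$-tilting pair $(T,P)$: the associated torsion class is $\Fac T$, and I must recover $(T,P)$ from $\Fac T$. The Ext-projective generator of $\Fac T$ is $T$ itself — here one uses that for $\tau$-rigid $T$ the object $T$ is Ext-projective in $\Fac T$ (a consequence of $\Hom_A(T,\tau T) = 0$ via the Auslander–Reiten formula $\Ext^1_A(T, \Fac T)$ being controlled by $\Hom_A(\Fac T, \tau T)$ and $\Fac T$ being generated by $T$) and that a $\tau$-tilting $T$ has no "extra" Ext-projectives by the maximality $|T| = n - |P|$ forced against the count above. The module $P$ is recovered as the sum of the $P(i)$ with $S(i) \notin \Fac T$; one checks $S(i) \in \Fac T \iff \Hom_A(P(i), T) \neq 0$, again using closure of $\Fac T$ under quotients. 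Conversely, starting from a functorially finite $\Tcal$ we defined $(T,P)$ so that $\Fac T = \Tcal$ by construction, closing the loop. I would also remark that both maps are evidently compatible with the basic-module normalisation, so "one-to-one correspondence" is literal. The routine isomorphism-theorem bookkeeping I would leave implicit; the one step that cannot be finessed is the Ext-projective count, and I expect to spend most of the proof there.
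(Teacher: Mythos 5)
Your overall architecture --- inverse map via the $\Ext$-projective generator $P(\Tcal)$, Auslander--Smal\o{} for existence, Skowro\'nski's Lemma for $|T|+|P|\le n$ --- is consistent with how the paper presents the result (the paper gives no proof of this theorem: it cites \cite[Theorem 2.7]{AIR2014} and merely records the inverse $\Tcal\mapsto P(\Tcal)$ in the remark following the $\tau$-tilting finiteness theorem). However, there is a genuine error in your construction of the projective part $P$. The condition $\Hom_A(P(i),\Tcal)=0$ is \emph{not} equivalent to $S(i)\notin\Tcal$; it is equivalent to $S(i)$ not occurring as a composition factor of any module of $\Tcal$. The paper's own \cref{exmp:A2} is a counterexample: for $Q\colon 1\to 2$ and $\Tcal=\Fac P(1)=\add\{P(1)\oplus S(1)\}$ one has $S(2)\notin\Tcal$ but $\Hom_A(P(2),P(1))\neq 0$, and the correct pair is $(P(1)\oplus S(1),0)$, not $(P(1)\oplus S(1),P(2))$; the latter is not even a $\tau$-rigid pair and violates Skowro\'nski's Lemma since $|T|+|P|=3>2$. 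Your verification of $\Hom_A(P,T)=0$ breaks at exactly this point: the image of a nonzero map $P(i)\to T$ is a \emph{submodule} of $T$ and a quotient of $P(i)$, and torsion classes are closed under quotients, not submodules, so nothing places that image (or its top $S(i)$) in $\Tcal$. The same false equivalence reappears in your round trip (``$S(i)\in\Fac T\iff \Hom_A(P(i),T)\neq 0$''). The repair is to keep only your first formulation: set $\add P=\add\{P(i):\Hom_A(P(i),\Tcal)=0\}$, for which $\Hom_A(P,T)=0$ is immediate because $T\in\Tcal$, and recover $P$ from a $\tau$-tilting pair by the same support condition rather than by membership of simples in $\Fac T$.

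Beyond this, the step you yourself single out as the heart --- the reverse inequality $|T|+|P|\ge n$, i.e.\ that a sincere functorially finite torsion class has exactly $|B|$ indecomposable $\Ext$-projectives --- is not actually carried out: ``cite the key lemma from there'' amounts to citing the substance of the theorem being proved, and the alternative via exchange sequences is only gestured at (mutation, \cref{theorem:twocompletions}, sits downstream of this bijection in the standard development). You also take for granted that the basic $\Ext$-projective generator $P(\Tcal)$ is itself $\tau$-rigid with $\Fac P(\Tcal)=\Tcal$, whereas Auslander--Smal\o{} as stated only provides \emph{some} $\tau$-rigid $T'$ with $\Fac T'=\Tcal$; identifying $P(\Tcal)$ with (an additive generator containing) $T'$ needs an argument. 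So, as written, the proposal has its hard core deferred and its support bookkeeping wrong; the latter is easily fixed, the former is the actual content of the theorem.
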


A natural question in representation theory is to consider the algebras having a finite number of objects with a certain property. 
An algebra $A$ is said to be \textit{$\tau$-tilting finite} if there are finitely many basic $\tau$-tilting pairs in $\mods A$. 
The following characterisation of $\tau$-tilting finite algebras was given by Demonet, Iyama and Jasso in \cite{DIJ2019}.

\begin{thm} \cite[Theorem 3.8]{DIJ2019} 
Let $A$ be a finite dimensional algebra. The following are equivalent:
\begin{enumerate}
    \item $A$ is $\tau$-tilting finite.
    \item Every torsion class in $\mods A$ is functorially finite.
    \item Every torsion-free class in $\mods A$ is functorially finite.
\end{enumerate}
In this case, there are only finitely many torsion and torsion-free classes in $\mods A$.
\end{thm}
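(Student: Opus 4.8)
The plan is to prove the cycle of implications by reducing it to the single equivalence (1) $\Leftrightarrow$ (2), and then to recover (1) $\Leftrightarrow$ (3) by applying that equivalence to the opposite algebra $A^{\mathrm{op}}$. For the reduction, recall that the $K$-duality $D = \Hom_K(-,K)\colon \mods A \to \mods A^{\mathrm{op}}$ is an exact anti-equivalence: it sends a torsion pair $(\Tcal,\Fcal)$ of $\mods A$ to the torsion pair $(D\Fcal, D\Tcal)$ of $\mods A^{\mathrm{op}}$, and it interchanges left and right approximations, hence covariant and contravariant finiteness. Thus $D$ induces a bijection between torsion-free classes of $\mods A$ and torsion classes of $\mods A^{\mathrm{op}}$ preserving functorial finiteness; and since $\tau$-tilting finiteness is left--right symmetric (support $\tau$-tilting $A$-modules biject with support $\tau$-tilting $A^{\mathrm{op}}$-modules), statement (3) for $A$ is equivalent to statement (2) for $A^{\mathrm{op}}$, hence, granting (1) $\Leftrightarrow$ (2) for $A^{\mathrm{op}}$, to (1). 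I also note that by the Adachi--Iyama--Reiten bijection quoted above, (1) is equivalent to the assertion that $\mods A$ admits only finitely many functorially finite torsion classes.

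For the implication (2) $\Rightarrow$ (1) I would argue by contraposition: suppose $A$ is $\tau$-tilting infinite, so $\mods A$ has infinitely many functorially finite torsion classes and hence infinitely many torsion classes. The poset of torsion classes is a complete lattice (an arbitrary intersection of torsion classes is a torsion class) whose order-theoretic structure is controlled by bricks as in \cite{DIJ2019}; using this one extracts an infinite strictly increasing chain $\Tcal_1 \subsetneq \Tcal_2 \subsetneq \cdots$ of torsion classes (passing to $A^{\mathrm{op}}$ via $D$ if the chain first produced is decreasing). Its union $\Tcal_\infty \coloneqq \bigcup_k \Tcal_k$ is again a torsion class: it is plainly closed under quotients, and if $0 \to L \to M \to N \to 0$ is exact with $L, N \in \Tcal_\infty$, then $L$ and $N$ lie in a common $\Tcal_k$, so $M \in \Tcal_k \subseteq \Tcal_\infty$. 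If $\Tcal_\infty$ were functorially finite, the Auslander--Smal\o{} theorem would give $\Tcal_\infty = \Fac T$ for some $\tau$-rigid, in particular finitely generated, module $T$; its finitely many indecomposable summands then all lie in a single $\Tcal_{k_0}$, so $\Tcal_\infty = \Fac T \subseteq \Tcal_{k_0} \subsetneq \Tcal_\infty$, a contradiction. Hence $\Tcal_\infty$ is a torsion class that is not functorially finite, and (2) fails.

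The implication (1) $\Rightarrow$ (2) is the heart of the matter. Let $\Tcal$ be any torsion class; it is contravariantly finite by the proposition above. I would take $P(\Tcal)$ to be the direct sum of the indecomposable Ext-projective objects of $\Tcal$; by Auslander--Smal\o{} this module is $\tau$-rigid, so by Skowro\'nski's Lemma it has at most $n$ indecomposable summands, and $\Fac P(\Tcal)$ is a functorially finite torsion class with $\Fac P(\Tcal) \subseteq \Tcal$. It then remains to prove the reverse inclusion $\Tcal \subseteq \Fac P(\Tcal)$, equivalently that $P(\Tcal)$ (together with the idempotent cutting out its support) forms a support $\tau$-tilting pair. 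This is where the hypothesis is used: for every $M \in \Tcal$ the minimal torsion class $\Filt(\Fac M)$ containing $M$ lies in $\Tcal$ and, by $\tau$-tilting finiteness, is one of the finitely many functorially finite torsion classes; organising these finitely many pieces and iterating Bongartz completions and $\tau$-tilting reductions in the sense of \cite{AIR2014,DIJ2019}, one shows that their join equals $\Tcal$ and is functorially finite, whence $\Tcal = \Fac P(\Tcal)$. The final assertion of the theorem is then immediate: under (1) every torsion class is functorially finite, hence equals one of the finitely many $\Fac T$ attached to $\tau$-tilting pairs $(T,P)$, and dually (via $D$) there are only finitely many torsion-free classes.

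The main obstacle is precisely the implication (1) $\Rightarrow$ (2) --- upgrading ``contravariantly finite'' to ``functorially finite'' for an arbitrary torsion class once $A$ is $\tau$-tilting finite. Carried out in detail this reproves a good part of the machinery of \cite{AIR2014} (Bongartz completions and mutation of support $\tau$-tilting pairs) and of \cite{DIJ2019} (finiteness of the set of torsion classes and the brick labelling of its Hasse quiver), so in these notes I would isolate that input as a cited result. A lesser, purely order-theoretic point is the extraction of an infinite chain of torsion classes from an infinite supply of them used for (2) $\Rightarrow$ (1): this relies on the lattice of torsion classes being much better behaved than a general complete lattice --- it is semidistributive and weakly atomic --- so that infinitely many elements do force an infinite chain.
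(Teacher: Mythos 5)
The paper itself offers no proof of this theorem: it is quoted from \cite{DIJ2019}, so there is no internal argument to compare yours against, and the question is only whether your sketch would stand on its own. Several ingredients are fine: the duality $D=\Hom_K(-,K)$ does exchange torsion and torsion-free classes and left/right approximations, and $\tau$-tilting finiteness is left-right symmetric, so (3) for $A$ is (2) for $A^{\mathrm{op}}$; the module $P(\Tcal)$ of Ext-projectives of an arbitrary torsion class is indeed $\tau$-rigid (via Auslander--Smal\o{}) with $\Fac P(\Tcal)\subseteq\Tcal$; and the observation that the union of a strictly ascending chain of torsion classes cannot be functorially finite (else it would equal $\Fac T$ with $T$ in some member of the chain) is correct and is essentially the standard argument.

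However, there are two genuine gaps. First, the implication (1)$\Rightarrow$(2), which you yourself call the heart of the matter, is not proved: the sentence about ``organising these finitely many pieces and iterating Bongartz completions and $\tau$-tilting reductions'' is precisely the content of the key lemma of \cite{DIJ2019} (if $\mathcal{U}\subsetneq\Tcal$ with $\mathcal{U}$ a functorially finite torsion class, there is a functorially finite torsion class $\mathcal{U}'$ with $\mathcal{U}\subsetneq\mathcal{U}'\subseteq\Tcal$), and deferring it to a citation of the very theorem being proved makes the attempt circular rather than a proof. Second, in (2)$\Rightarrow$(1) you need, from $\tau$-tilting infiniteness, an infinite \emph{strictly ascending} chain of torsion classes in $\mods A$. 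Infinitely many torsion classes do not formally force an infinite chain (there could a priori be an infinite antichain with all chains finite); the semidistributivity/weak atomicity you invoke is itself a nontrivial unproved input. Moreover the fallback ``pass to $A^{\mathrm{op}}$ if the chain is decreasing'' does not repair this: a descending chain of torsion classes in $\mods A$ gives an ascending chain of torsion classes in $\mods A^{\mathrm{op}}$ and hence a non-functorially-finite \emph{torsion-free} class over $A$, i.e.\ the negation of (3), not of (2). With your chosen logical architecture ((1)$\Leftrightarrow$(2) first, then (3) by duality) this only yields $((2)\text{ and }(3))\Rightarrow(1)$, which is weaker than the theorem, so the chain-extraction step must be justified, or the structure of the argument rearranged as in \cite{DIJ2019}.
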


Recall, that for a torsion class $\Tcal$ we say $X \in \Tcal$ is \textit{$\Ext$-projective} if $\Ext_A^1(X, \Tcal)=0$. Denote by $P(\Tcal)$ the direct sum of one copy of each indecomposable $\Ext$-projective object in $\Tcal$ up to isomorphism. Then, the inverse bijection is given by sending a functorially finite torsion class $\Tcal$ to $P(\Tcal)$. Let us demonstrate this bijection using the simple case of $A_2$ which has exactly 5 $\tau$-tilting pairs.
\begin{exmp} \label{exmp:A2}
    Let $Q= \begin{tikzcd} 1 \arrow[r] & 2 \end{tikzcd}$. The Auslander-Reiten quiver given by
    \[ \begin{tikzcd}[ampersand replacement=\&,column sep = 1.5em, row sep=2em]
    \& {\tiny \begin{array}{cc} 1 \\2 \end{array}} \arrow[rd] \\
    {\tiny \begin{array}{c} 2 \end{array}} \arrow[ru] \& \& {\tiny \begin{array}{c} 1 \end{array}} \arrow[ll,dashed, dash]
    \end{tikzcd}\]
Let us calculate $\Fac M$ for all $M \in \mods A$ and relate them to $\tau$-tilting pairs.
\[ \Fac({\tiny \begin{array}{c} 1 \end{array}}) = \add \{ {\tiny \begin{array}{c} 1 \end{array}} \} \quad \longleftrightarrow \quad ({\tiny \begin{array}{c} 1 \end{array}},{\tiny \begin{array}{c} 2 \end{array}}) \]
\[ \Fac(\tiny{\begin{array}{cc} 1 \\2 \end{array}}) \normalsize = \add \{ \tiny{\begin{array}{cc} 1 \\2 \end{array}} \normalsize\oplus {\tiny \begin{array}{c} 1 \end{array}} \} = \Fac( \tiny{\begin{array}{cc} 1 \\2 \end{array}} \normalsize\oplus {\tiny \begin{array}{c} 1 \end{array}} ) \quad \longleftrightarrow \quad ( \tiny{\begin{array}{cc} 1 \\2 \end{array}} \normalsize\oplus {\tiny \begin{array}{c} 1 \end{array}} , {\tiny\begin{array}{cc} 0 \end{array}}) \]
\[ \Fac({\tiny \begin{array}{c} 2 \end{array}}) = \add\{{\tiny \begin{array}{c} 2 \end{array}} \} \quad \longleftrightarrow \quad ({\tiny \begin{array}{c} 2 \end{array}}, \tiny{\begin{array}{cc} 1 \\2 \end{array}})\]
\[ \Fac({\tiny \begin{array}{c} 2 \end{array}} \oplus \tiny{\begin{array}{cc} 1 \\2 \end{array}}) = \add\{{\tiny \begin{array}{c} 2 \end{array}} \oplus \tiny{\begin{array}{cc} 1 \\2 \end{array}} \oplus 1\} = \mods A \quad \longleftrightarrow \quad ({\tiny \begin{array}{c} 2 \end{array}} \oplus \tiny{\begin{array}{cc} 1 \\2 \end{array}},{\tiny\begin{array}{cc} 0 \end{array}})\]
\[ \Fac({\tiny\begin{array}{cc} 0 \end{array}}) = \add \{ {\tiny\begin{array}{cc} 0 \end{array}}\} \quad \longleftrightarrow \quad ({\tiny\begin{array}{cc} 0 \end{array}},{\tiny \begin{array}{c} 2 \end{array}} \oplus \tiny{\begin{array}{cc} 1 \\2 \end{array}}). \]
So far we have only considered torsion classes $\Fac T$ arising from a $\tau$-rigid object $T$, however the module ${\tiny \begin{array}{c} 1 \end{array}} \oplus {\tiny \begin{array}{c} 2 \end{array}}$ is not $\tau$-rigid and thus $\Fac({\tiny \begin{array}{c} 1 \end{array}} \oplus {\tiny \begin{array}{c} 2 \end{array}}) = \add \{{\tiny \begin{array}{c} 1 \end{array}} \oplus {\tiny \begin{array}{c} 2 \end{array}}\} $ is not a functorially finite torsion class. However
\[ \Filt(\Fac({\tiny \begin{array}{c} 1 \end{array}} \oplus {\tiny \begin{array}{c} 2 \end{array}})) = \add \{ {\tiny \begin{array}{c} 1 \end{array}} \oplus {\tiny \begin{array}{c} 2 \end{array}} \oplus \tiny{\begin{array}{cc} 1 \\2 \end{array}} \} = \Fac({\tiny \begin{array}{c} 2 \end{array}} \oplus \tiny{\begin{array}{cc} 1 \\2 \end{array}}).\]
\end{exmp}

In this example, we also showcased a $\tau$-rigid module that is not support $\tau$-tilting, namely $\tiny{\begin{array}{cc} 1 \\2 \end{array}}$. In general, the following two torsion pairs associated to a $\tau$-rigid pair $(T,P)$ are of particular interest:
\[ (\Fac T, T^\perp) \quad \text{and} \quad ({^\perp \tau T \cap P^\perp}, \Sub(\tau T \oplus \nu P)),\]

where $\nu$ is the Nakayama functor. Adachi Iyama and Reiten \cite{AIR2014} showed the following relation between these two torsion-classes.
\begin{prop} \cite[Corollary 2.13]{AIR2014}
    Let $(T,P)$ be $\tau$-rigid, then $\Fac T \subseteq {^\perp \tau T \cap P^\perp}$. Moreover the equality holds if $(T,P)$ is $\tau$-tilting.
\end{prop}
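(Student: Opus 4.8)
The plan is to prove the two statements separately: first the inclusion $\Fac T \subseteq {}^\perp\tau T \cap P^\perp$ for an arbitrary $\tau$-rigid pair $(T,P)$, and then the reverse inclusion under the additional hypothesis that $(T,P)$ is $\tau$-tilting. For the inclusion, I would start from an arbitrary $X \in \Fac T$, so there is an epimorphism $T^m \twoheadrightarrow X$. To show $X \in P^\perp$, i.e. $\Hom_A(P,X) = 0$: any map $P \to X$ lifts along the epimorphism $T^m \twoheadrightarrow X$ because $P$ is projective, giving a map $P \to T^m$; but $\Hom_A(P,T) = 0$ by the definition of a $\tau$-rigid pair, so the lift is zero and hence the original map is zero. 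To show $X \in {}^\perp\tau T$, i.e. $\Hom_A(X, \tau T) = 0$: apply $\Hom_A(-,\tau T)$ to the epimorphism; since $\mathrm{Hom}(-,\tau T)$ is left exact, $\Hom_A(X,\tau T)$ embeds into $\Hom_A(T^m,\tau T) = \Hom_A(T,\tau T)^m = 0$ using that $T$ is $\tau$-rigid. Hence $X \in {}^\perp\tau T \cap P^\perp$.

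For the reverse inclusion when $(T,P)$ is $\tau$-tilting, I would argue that both $\Fac T$ and ${}^\perp\tau T \cap P^\perp$ are torsion classes (the former by the Auslander--Smal\o{} theorem quoted earlier, since $T$ is $\tau$-rigid; the latter because it is one of the torsion classes associated to $(T,P)$ as recorded just before the statement), so it suffices to show that $\Fac T$ and ${}^\perp\tau T \cap P^\perp$ have the same $\mathrm{Ext}$-projective objects, or equivalently that any torsion class strictly containing $\Fac T$ must leave ${}^\perp\tau T \cap P^\perp$. Concretely, I would use that $T = P(\Fac T)$ when $(T,P)$ is $\tau$-tilting — this is exactly the content of the bijection $(T,P) \mapsto \Fac T$ together with its described inverse $\Tcal \mapsto P(\Tcal)$ — and then show that $\Fac T$ is a \emph{maximal} element among torsion classes contained in ${}^\perp\tau T \cap P^\perp$. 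Given $X \in {}^\perp\tau T \cap P^\perp$, take the canonical sequence $0 \to tX \to X \to fX \to 0$ with respect to the torsion pair $(\Fac T, T^\perp)$; I want $fX = 0$. One shows $fX \in {}^\perp\tau T \cap P^\perp$ as well (torsion classes are closed under quotients, and these two categories are quotient-closed), and $fX \in T^\perp$; combining $\Hom_A(T,fX) = 0$ with $fX \in {}^\perp\tau T$, a dimension count via $|T| + |P| = n$ forces $fX = 0$.

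The step I expect to be the main obstacle is this last cardinality argument — making precise why $\Hom_A(T,fX) = 0$, $\Hom_A(fX,\tau T) = 0$ and $\Hom_A(P,fX) = 0$ together force $fX = 0$. The cleanest route is presumably to invoke \cite[Lemma 2.1]{AIR2014} or the Bongartz-type completion machinery from \cite{AIR2014}: if $fX \neq 0$, then $fX$ would contribute a $\tau$-rigid pair properly extending $(T,P)$ — more precisely $T \oplus fX$ would be $\tau$-rigid relative to the algebra $A/\langle e\rangle$ where $\add Ae = \add P$, or one produces a $\tau$-rigid pair of rank exceeding $n$ — contradicting Skowro\'nski's Lemma (\cref{lem:rigidindecsummands}). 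So I would reduce to $B = A/\langle e \rangle$ via \cite[Lemma 2.1b]{AIR2014}, where $P^\perp$ becomes all of $\mods B$ and $T$ becomes a genuine $\tau_B$-tilting $B$-module, and then quote the known fact (in the $P=0$ case, \cite[Proposition 2.13]{AIR2014} or its proof) that for a $\tau$-tilting module $T$ one has $\Fac T = {}^\perp\tau T$. I should double-check that the passage to $B$ is compatible with $\Fac$, with $\tau$, and with the ambient torsion pairs, which is the bookkeeping I would be most careful about; everything else is the routine left-exactness and projectivity arguments sketched above.
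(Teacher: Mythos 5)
Your proof of the inclusion $\Fac T \subseteq {}^\perp \tau T \cap P^\perp$ is exactly the paper's argument: lift a map $P \to X$ through the epimorphism $T^m \twoheadrightarrow X$ using projectivity and $\Hom_A(P,T)=0$, and kill maps $X \to \tau T$ by precomposing with (equivalently, applying left exactness of $\Hom_A(-,\tau T)$ to) that epimorphism together with $\Hom_A(T,\tau T)=0$. Note that the paper stops there: for the ``moreover'' part it simply cites \cite{AIR2014}, so everything you write about the reverse inclusion goes beyond the text and has to be judged on its own.

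For that reverse inclusion your skeleton is the standard one (take the canonical sequence $0 \to tX \to X \to fX \to 0$ for the torsion pair $(\Fac T, T^\perp)$, observe $fX \in T^\perp \cap {}^\perp \tau T \cap P^\perp$ since ${}^\perp\tau T\cap P^\perp$ is a quotient-closed torsion class, and aim for $fX=0$), but the step you yourself flag is genuinely broken as written. From $fX \in T^\perp \cap {}^\perp \tau T \cap P^\perp$ you cannot conclude that $T \oplus fX$ is $\tau$-rigid over $A$ or over $B=A/\langle e\rangle$: that would need $\Hom_A(fX,\tau(fX))=0$ and $\Hom_A(T,\tau(fX))=0$, and neither follows from the three vanishing conditions you have, so no $\tau$-rigid pair with more than $n$ indecomposable summands is produced and Skowro\'nski's Lemma (\cref{lem:rigidindecsummands}) cannot be invoked; your fallback of passing to $B$ and quoting the $P=0$ case of the very statement is circular unless that case is taken as a black box. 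The clean repair, using only results already quoted in these notes, is \cref{thm:jassotauperp}: the category $T^\perp \cap {}^\perp \tau T \cap P^\perp$ is equivalent to $\mods B_{(T,P)}$ with $|B_{(T,P)}| = |A|-|T|-|P| = 0$ when $(T,P)$ is $\tau$-tilting, hence it is the zero category and $fX=0$ follows at once. (Alternatively one can run the argument of \cite{AIR2014} directly: ${}^\perp\tau T\cap P^\perp$ is a functorially finite torsion class in which $T$ is $\Ext$-projective, and counting indecomposable $\Ext$-projectives via $|T|+|P|=n$ forces $P({}^\perp\tau T\cap P^\perp)=T$, so this torsion class equals $\Fac T$; but that needs the Auslander--Smal\o{} criterion and the fact that $P(\Tcal)$ has exactly $n$ summands, which are not proved in these notes.)
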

\begin{proof}
We only show here the inclusion $\Fac T \subseteq {^\perp \tau T \cap P^\perp}$.
Let $M \in \Fac T$, then there exists an epimorphism $T^r \xrightarrow[]{p} M \to 0$ for some $r \in \N$. Assume there exists a nonzero $f \in \Hom(M, \tau T)$, then $fp: T \to \tau T$ is a nonzero map, contradicting the fact that $T$ is $\tau$-rigid. On the other hand, if $g \in \Hom(P,M)$ is nonzero, then the projectivity of $P$ implies the existence of a map $h: P \to T^r$ such that the following diagram commutes
\[ \begin{tikzcd} & P \arrow[ld, "\exists h", dashed, swap] \arrow[d,"g"] \\ T^r \arrow[r, "p"] & M \arrow[r] & 0. \end{tikzcd}\]
Again, since $(T,P)$ is $\tau$-rigid $h = 0$ and thus $g= 0$. In conclusion $M \in {}^\perp \tau T \cap P^\perp$. 
\end{proof}

Since $\Fac T$ is contained in ${^\perp \tau T \cap P^\perp}$ a natural question is to ask how many functorially finite torsion classes $\Tcal$ there are such that $\Fac T \subset \Tcal \subset {^\perp \tau T \cap P^\perp}$. In other words, how many completions are there of a  $\tau$-rigid $(T,P)$ into a $\tau$-tilting pair? We call a $\tau$-tilting pair \textit{almost $\tau$-tilting} if $|T|+|P| = |A|-1$. The following property of almost $\tau$-tilting pairs is one of the motivating factors that lead to the introduction of $\tau$-tilting theory. 

\begin{thm}\label{theorem:twocompletions} \cite[Theorem 2.18]{AIR2014}
    Let $(T,P)$ be an almost $\tau$-tilting pair. Then there are exactly two completions of $(T,P)$ into a $\tau$-tilting pair.
\end{thm}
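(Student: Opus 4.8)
The plan is to use the bijection between $\tau$-tilting pairs and functorially finite torsion classes (Theorem~\cite[2.7]{AIR2014}) to translate the statement into a purely order-theoretic claim about the poset of functorially finite torsion classes, and then to exploit the torsion pair $(\Fac T, {}^\perp\tau T \cap P^\perp)$ — or rather the interval it cuts out — to pin down exactly the completions. First I would record that a $\tau$-tilting pair $(T',P')$ completes the almost $\tau$-tilting pair $(T,P)$ precisely when $\add T \subseteq \add T'$ and $\add P \subseteq \add P'$; under the AIR bijection this says $\Fac T \subseteq \Fac T'$, and dually (using the second torsion pair associated to $(T,P)$, together with the previous Proposition~\cite[2.13]{AIR2014}) that $\Fac T' \subseteq {}^\perp\tau T \cap P^\perp$. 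So completions of $(T,P)$ correspond bijectively to functorially finite torsion classes $\Tcal$ lying in the interval $[\Fac T,\ {}^\perp\tau T\cap P^\perp]$ of the lattice of torsion classes.

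Next I would identify the endpoints of this interval as themselves giving $\tau$-tilting pairs: $\Fac T$ is functorially finite by the Auslander–Smalø theorem (since $T$ is $\tau$-rigid, hence so is any $\tau$-rigid pair supported on the relevant quotient algebra), and ${}^\perp\tau T\cap P^\perp$ is a functorially finite torsion class by the same circle of ideas applied to the Bongartz-type completion. The key numerical input is that $(T,P)$ is \emph{almost} $\tau$-tilting, i.e.\ $|T|+|P| = n-1$: by Skowroński's Lemma (\cref{lem:rigidindecsummands}) each completion $(T',P')$ has $|T'|+|P'| = n$, so it is obtained from $(T,P)$ by adding exactly one indecomposable summand, either to $T$ or to $P$. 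I would then argue that adding one summand to $T$ (the ``Bongartz completion'' direction) produces the torsion class ${}^\perp\tau T\cap P^\perp$, while adding one summand to $P$ produces $\Fac T$, so these two endpoints are genuinely distinct completions; this uses that $|T|+|P|<n$ strictly, so at least one new summand is forced and the two operations land in different places.

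The heart of the argument — and the main obstacle — is showing there are \emph{no other} completions: that the interval $[\Fac T, {}^\perp\tau T\cap P^\perp]$ contains exactly two functorially finite torsion classes, namely its endpoints. Here I would invoke the description of this interval via the wide subcategory / mutation machinery of AIR: restrict attention to the algebra $C$ obtained by "cutting out" the summands of $T$ that are $\Ext$-projective and quotienting by the ideal generated by $P$; the interval of torsion classes between $\Fac T$ and ${}^\perp\tau T\cap P^\perp$ is isomorphic to the lattice of torsion classes of a local-type algebra of rank one (equivalently, the ``Hom-orthogonal'' category $\Fac T\,{}^\perp \cap {}^{\perp}\tau T\cap P^\perp$ is generated by a single brick). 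A rank-one torsion lattice has exactly two elements, $0$ and everything, and these pull back to the two endpoints. Making the reduction to the rank-one situation rigorous — identifying the relevant quotient algebra and checking the interval really is its whole torsion lattice — is the delicate part; everything before it is bookkeeping with the AIR bijection and Skowroński's Lemma.
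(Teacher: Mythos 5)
The paper itself gives no proof of this statement --- it is quoted from \cite[Theorem 2.18]{AIR2014} --- so your proposal can only be judged on its own terms. Its skeleton is reasonable and in fact matches the machinery the paper introduces immediately afterwards: completions of $(T,P)$ should correspond to functorially finite torsion classes in the interval $[\Fac T,\ {}^\perp\tau T\cap P^\perp]$, and $\tau$-tilting reduction (\cref{thm:jassotauperp}) turns this into a question about an algebra $B_{(T,P)}$ with $|B_{(T,P)}|=|A|-|T|-|P|=1$; a basic algebra of rank one is local, its only torsion classes are $\{0\}$ and $\mods B$ (any nonzero module has the unique simple as a quotient of its top, and extension-closure then gives everything), both are functorially finite, so $B_{(T,P)}$ has exactly two $\tau$-tilting pairs and hence $(T,P)$ has exactly two completions. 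If you are content to cite Jasso's bijection, this closes the argument. Note, however, one caveat about logical order: Jasso's theorem is presented in the paper (and historically) as a \emph{generalisation} of \cref{theorem:twocompletions}, so if you use it you must check his proof of the bijection does not itself rely on the two-completions theorem (it does not, but this deserves a sentence).

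The genuine gaps are in the steps you treat as bookkeeping. First, ``$(T',P')$ completes $(T,P)$ iff $\Fac T\subseteq\Fac T'\subseteq{}^\perp\tau T\cap P^\perp$'' is not automatic from the AIR bijection: the nontrivial direction is that any functorially finite torsion class $\Tcal$ in the interval has $T$ as a direct summand of $P(\Tcal)$, which requires the Auslander--Smal{\o} criterion ($\Ext^1_A(T,\Fac X)=0$ iff $\Hom_A(X,\tau T)=0$) to see that $T$ is $\Ext$-projective in $\Tcal$, plus $P(\Tcal)\in\Tcal\subseteq P^\perp$. Second, your distinctness argument is wrong as stated: it is not true that the completion realising $\Fac T$ is obtained by enlarging $P$ --- it may add a non-projective indecomposable $X\in\Fac T$ to $T$ --- so ``the two operations land in different places'' proves nothing; distinctness should come either from the reduction (the rank-one algebra is nonzero, so its two torsion classes differ) or from the unproved converse of the paper's Proposition on $\Fac T\subseteq{}^\perp\tau T\cap P^\perp$ (equality iff $\tau$-tilting). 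Third, your description of the reduced algebra is incorrect: it is not $A$ with summands cut out and an ideal generated by $P$ quotiented away, but (as in \cref{thm:jassotauperp}) the endomorphism algebra of the Bongartz completion of $(T,P)$ modulo the idempotent corresponding to $T$, equivalently one works inside the $\tau$-perpendicular category $T^\perp\cap{}^\perp\tau T\cap P^\perp$. Since the step you defer (``checking the interval really is the whole torsion lattice of a rank-one algebra'') is exactly the content of Jasso's theorem, the proposal as written either cites that theorem --- in which case the above repairs make it complete --- or leaves the heart of the proof unproved.
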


The theorem above allows us to define \textit{mutation} of $\tau$-tilting pairs, where we delete one indecomposable direct summand of $(T,P)$ and complete it to the unique different $\tau$-tilting pair. The following definition formalises this and distinguishes between the two completions of an almost $\tau$-tilting pair.

\begin{defn} \label{defn:taumutation}
    Let $(T,P)$ be an almost $\tau$-tilting pair, and let $(T',P')$ and $(T'',P'')$ be the two completions of $(T,P)$ into  a $\tau$-tilting pair. Then we say that $(T',P')$ and $(T'',P'')$ are \textit{mutations} of each other. And we say that $(T'',P'')$ is the \textit{left mutation} of $(T',P')$ if $\Fac T'' = \Fac T$ and $\Fac T' = {^\perp \tau T \cap P^\perp}$.
\end{defn}

Jasso \cite{Jasso2015} generalised \cref{theorem:twocompletions} to any number of indecomposable direct summands in the following way. This process is usually called \textit{$\tau$-tilting reduction}.

\begin{thm} \cite[Theorem 1.1]{Jasso2015}
For every $\tau$-rigid pair $(T,P)$ there exists an algebra $B_{(T,P)}$ (the $\tau$-tilting reduction of $A$ by $(T,P)$) and a bijection
\[ \{ \tau\text{-tilting pairs in $\mods B_{(T,P)}$} \} \longleftrightarrow \{ \text{completions of $(T,P)$ to a $\tau$-tilting pair}\}.\]
\end{thm}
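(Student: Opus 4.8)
\emph{Overview and reduction to $P=0$.} The plan is to reinterpret ``completions of $(T,P)$'' as functorially finite torsion classes lying in a fixed interval of the lattice $\mathrm{tors}(A)$ of torsion classes of $A$, and then to realise that interval as $\mathrm{tors}(B_{(T,P)})$ for a smaller algebra. First I would reduce to $P=0$: choose an idempotent $e\in A$ with $\add Ae=\add P$ and set $\overline A=A/\langle e\rangle$. Since $\Hom_A(P,T)=0$ the module $T$ lies in $\mods\overline A$, and it is $\tau_{\overline A}$-rigid by \cite[Lemma 2.1b]{AIR2014}. A $\tau$-tilting pair $(U,Q)$ of $A$ completes $(T,P)$ precisely when $T\in\add U$ and $P\in\add Q$; the condition $P\in\add Q$ is equivalent to $\Hom_A(P,U)=0$, i.e. to $U\in\mods\overline A$, and under this identification the $\tau$-tilting pairs of $A$ having $P$ in their projective part correspond bijectively to the $\tau$-tilting pairs of $\overline A$. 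Hence completions of $(T,P)$ over $A$ biject with completions of $(T,0)$ over $\overline A$, and we may assume $P=0$ (relabelling $\overline A$ as $A$ and putting $n=|A|$).

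\emph{Completions as an interval of torsion classes.} Let $T$ be $\tau$-rigid. By the Auslander--Reiten formula, $\Ext^1_A(T,Y)=0$ whenever $\Hom_A(Y,\tau T)=0$, so $T$ is $\Ext$-projective in the torsion class ${}^{\perp}\tau T$, and therefore in every torsion class $\Tcal$ with $\Fac T\subseteq\Tcal\subseteq{}^{\perp}\tau T$. Using this together with \cite[Theorem 2.7]{AIR2014} and \cite[Corollary 2.13]{AIR2014}, a $\tau$-tilting pair $(U,Q)$ of $A$ satisfies $T\in\add U$ if and only if the (functorially finite) torsion class $\Fac U$ lies in the interval $I\coloneqq[\Fac T,\ {}^{\perp}\tau T]$ of $\mathrm{tors}(A)$: indeed $T\in\add U$ forces $\Fac T\subseteq\Fac U\subseteq{}^{\perp}\tau U\subseteq{}^{\perp}\tau T$, while if $\Fac U\in I$ then $T$ is $\Ext$-projective in $\Fac U$ and, as the $\Ext$-projective objects of $\Fac U$ are exactly the direct summands of $U$, we get $T\in\add U$. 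The Bongartz complement of $T$ is a $\tau$-tilting module whose torsion class is the top element ${}^{\perp}\tau T$ of $I$ (functorially finite by \cite[Theorem 5.10]{AuslanderSmalo1980}), so $I$ has both a greatest and a least element and completions of $(T,0)$ correspond bijectively to functorially finite torsion classes in $I$.

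\emph{The reduced algebra and the order isomorphism.} Introduce the $\tau$-perpendicular category $\Ccal\coloneqq{}^{\perp}\tau T\cap T^{\perp}$. The technical core is to prove that $\Ccal$ is functorially finite in $\mods A$, that it carries a natural abelian structure admitting a relative progenerator, and hence that there exists a finite-dimensional algebra $B_{(T,P)}$ with $|B_{(T,P)}|=n-|T|$ (in general $n-|T|-|P|$) and an equivalence $\Ccal\simeq\mods B_{(T,P)}$. One then shows that $\Tcal\mapsto\Tcal\cap\Ccal$ defines an isomorphism of posets $I\xrightarrow{\ \sim\ }\mathrm{tors}(\Ccal)\cong\mathrm{tors}(\mods B_{(T,P)})$, with an inverse that reconstructs a torsion class of $A$ out of a torsion class $\mathcal{S}$ of $\Ccal$ together with $\Fac T$ by an extension-closure (``$\Filt$'') procedure, and that this isomorphism restricts to a bijection between the functorially finite torsion classes on the two sides.

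\emph{Conclusion and the main obstacle.} Composing the bijections of the three steps with the $\tau$-tilting correspondence \cite[Theorem 2.7]{AIR2014} for $B_{(T,P)}$ gives
\[
\{\tau\text{-tilting pairs in }\mods B_{(T,P)}\}\ \longleftrightarrow\ \{\text{completions of }(T,P)\text{ to a }\tau\text{-tilting pair}\},
\]
as desired. The first two steps are essentially bookkeeping; the genuine difficulty is the third — establishing that $\Ccal$ is functorially finite and honestly equivalent to a module category, and that intersection with $\Ccal$ transports torsion classes and functorial finiteness faithfully in both directions. This is the substance of Jasso's $\tau$-tilting reduction, the delicate points being the control of $\Ccal$-approximations inside $\mods A$ and the verification that the extension-closure procedure really inverts restriction to $\Ccal$.
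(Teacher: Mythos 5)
The paper does not actually prove this theorem: it is quoted verbatim from \cite[Theorem 1.1]{Jasso2015} (together with \cite[Theorem 1.4]{Jasso2015} for the description of $\mods B_{(T,P)}$), so there is no in-paper argument to measure you against; the relevant comparison is with Jasso's original proof, and your outline reconstructs exactly that route. Your first two steps are correctly and essentially completely argued: the reduction to $P=0$ via $\overline A=A/\langle e\rangle$ and \cite[Lemma 2.1]{AIR2014} is sound (for a $\tau$-tilting pair $(U,Q)$ the projective part $Q$ is the maximal projective with $\Hom_A(Q,U)=0$, so $P\in\add Q\Leftrightarrow\Hom_A(P,U)=0\Leftrightarrow U\in\mods\overline A$), and the identification of completions of $(T,0)$ with functorially finite torsion classes in the interval $[\Fac T,{}^{\perp}\tau T]$ is proved correctly, using the Auslander--Reiten formula to get $\Ext$-projectivity of $T$ in any $\Tcal$ with $\Fac T\subseteq\Tcal\subseteq{}^{\perp}\tau T$, the fact that the $\Ext$-projectives of $\Fac U$ are $\add U$, and the Bongartz completion for the top of the interval.

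However, as a proof of the stated theorem your proposal has a genuine gap: the entire content of the result is concentrated in your third step, and there you only announce what must be shown. That the $\tau$-perpendicular category $\Ccal={}^{\perp}\tau T\cap T^{\perp}$ (or $T^\perp\cap{}^{\perp}\tau T\cap P^\perp$ in general) is functorially finite in $\mods A$, that it is equivalent to $\mods B_{(T,P)}$ for a finite-dimensional algebra $B_{(T,P)}$ with $|B_{(T,P)}|=|A|-|T|-|P|$, and that $\Tcal\mapsto\Tcal\cap\Ccal$ is an order isomorphism from the interval onto the torsion classes of $\Ccal$ which, together with its $\Filt$-type inverse, preserves functorial finiteness in both directions -- none of this is established or even reduced to cited results; it is precisely Jasso's Theorems 3.8, 3.12 and 3.13 (in the numbering of \cite{Jasso2015}), and without it the chain of bijections in your conclusion does not exist. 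So what you have is an accurate and well-organised roadmap of Jasso's proof, with the easy bookkeeping done honestly, but the delicate approximation-theoretic core that the theorem actually rests on is assumed rather than proved.
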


The module category of this $\tau$-tilting reduction may be expressed in the following way.

\begin{thm} \cite[Theorem 1.4]{Jasso2015} \label{thm:jassotauperp}
    The category $\mods B_{(T,P)}$ is equivalent to the $\tau$-perpendicular category ${T^\perp\cap{}^\perp \tau T \cap P^\perp}$ of $(T,P)$ and
    \[ | B_{(T,P)} | = |A|-|T|-|P|.\]
\end{thm}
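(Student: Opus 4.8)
The plan is to realise the $\tau$-perpendicular category $\mathcal{C} := T^\perp \cap {}^\perp\tau T \cap P^\perp$ as a functorially finite \emph{wide} subcategory of $\mods A$ that has a projective generator $G$; the standard structure theory of such subcategories then yields an equivalence $\mathcal{C} \simeq \mods\End_A(G)$, one defines $B_{(T,P)} := \End_A(G)$, and the dimension count reduces to computing $|G|$.

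First I would reduce to the case $P = 0$. Since $(T,P)$ is $\tau$-rigid we have $\Hom_A(P,T) = 0$; writing $\add P = \add eA$ this forces $T$ to lie over $A' := A/\langle e\rangle$, while $P^\perp$ is the Serre subcategory $\mods A'$ and $\Hom_A(T,X) = \Hom_{A'}(T,X)$ for every $X \in \mods A'$. Together with the compatibility of Auslander--Reiten translates under quotients by idempotents \cite[Lemma~2.1]{AIR2014}, this identifies $\mathcal{C}$ with $T^\perp \cap {}^\perp\tau_{A'}T$ computed inside $\mods A'$. Hence $|A|$ is replaced by $|A'| = |A| - |P|$ and the target equality becomes $|B_{(T,P)}| = |A'| - |T|$, so from now on I assume $P = 0$.

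Next I would gather the torsion-theoretic input and deduce wideness. Let $\bar T = T \oplus C$ be the Bongartz completion of the $\tau$-rigid module $T$ to a $\tau$-tilting module, so $|C| = |A| - |T|$; by \cite{AIR2014} the class $\mathcal{U} := {}^\perp\tau T$ equals $\Fac\bar T$, is functorially finite, and has $\add\bar T$ as its subcategory of $\Ext$-projectives (in particular $T \in \mathcal{U}$, in keeping with \cite[Corollary~2.13]{AIR2014}). By Auslander--Reiten duality $\Ext^1_A(T,X) = 0$ whenever $X \in \mathcal{U}$ (since $\Hom_A(X,\tau T)=0$), so $\Hom_A(T,-)$ is exact on $\mathcal{U}$; likewise $\Ext^1_A(Y,\tau T) = 0$ whenever $Y \in T^\perp$ (since $\Hom_A(T,Y)=0$). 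Granting these, wideness of $\mathcal{C} = T^\perp \cap \mathcal{U}$ is a routine diagram chase: for $f\colon X \to Y$ in $\mathcal{C}$, the kernel and image lie in $T^\perp$ (closed under subobjects) and the image and cokernel lie in $\mathcal{U}$ (closed under quotients); that $\Hom_A(T,\coker f)=0$ comes from applying $\Hom_A(T,-)$ to $0 \to \image f \to Y \to \coker f \to 0$, and that $\Hom_A(\ker f,\tau T)=0$ from applying $\Hom_A(-,\tau T)$ to $0 \to \ker f \to X \to \image f \to 0$, using the two vanishings above; closure under extensions is inherited from $T^\perp$ and $\mathcal{U}$. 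So $\mathcal{C}$ is a wide subcategory of $\mods A$, hence an abelian category with exact inclusion.

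The remaining work is the technical heart of the argument, and is twofold. First, one must establish that $\mathcal{C}$ is functorially finite; this is not formal from the functorial finiteness of $\mathcal{U}$ and of $T^\perp$ (which holds because $\Fac T$ is functorially finite), since intersections of functorially finite subcategories need not be functorially finite, and I expect to obtain it via the machinery of wide subcategories attached to intervals of torsion classes, the relevant interval being $[\Fac T,{}^\perp\tau T]$. Second, granting functorial finiteness, $\mathcal{C}$ has enough projectives, and one must identify a basic projective generator; the natural candidate is $G := \bigoplus_i fC_i$, the torsion-free truncations for the torsion pair $(\Fac T, T^\perp)$ of the indecomposable summands $C_i$ of the Bongartz complement $C$ (the summands of $T$ contribute nothing, as $fT_j=0$), and the crux of the count is to show that each $fC_i$ is nonzero and that the $fC_i$ are pairwise non-isomorphic, so that $|G| = |C| = |A| - |T|$. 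Granting both, $\Hom_A(G,-)$ is an equivalence $\mathcal{C} \xrightarrow{\ \sim\ } \mods\End_A(G) =: \mods B_{(T,P)}$, and undoing the reduction of the second paragraph gives $|B_{(T,P)}| = |A| - |T| - |P|$. (An alternative for the count would be to compute $\rank K_0(\mathcal{C})$ directly from the mutation combinatorics of the interval $[\Fac T,{}^\perp\tau T]$, but the projective-generator route delivers the equivalence and the count at once.)
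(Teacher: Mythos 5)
First, note that the paper does not prove this statement at all: \cref{thm:jassotauperp} is imported verbatim from Jasso's paper as a black box, so there is no proof in the text to compare yours against; the relevant benchmark is Jasso's original argument, and your outline is essentially a reconstruction of that route (reduce to $P=0$, pass to the Bongartz completion $\overline{T}=T\oplus C$, use $\Fac \overline{T}={}^{\perp}\tau T$, prove wideness via Auslander--Reiten duality, and extract a projective generator of the wide subcategory). The parts you actually carry out are correct: the reduction to the support algebra is the standard use of \cite[Lemma 2.1]{AIR2014} (the same compatibility the paper invokes in its proof of Skowro\'nski's Lemma), and the wideness argument via the two vanishings $\Ext^1_A(T,X)=0$ for $X\in{}^{\perp}\tau T$ and $\Ext^1_A(Y,\tau T)=0$ for $Y\in T^{\perp}$ is complete and is exactly how this is done in the literature.

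However, as a proof the proposal has genuine gaps precisely where the substance of the theorem lies, and you flag them yourself with ``granting both''. (i) Functorial finiteness (equivalently, the existence of enough projectives) of $T^{\perp}\cap{}^{\perp}\tau T\cap P^{\perp}$ is only promised via unspecified ``machinery of wide subcategories attached to intervals of torsion classes''; nothing is actually proved. (ii) The identification of $G=\bigoplus_i fC_i$ as a projective generator and the count are incomplete even as stated: besides showing each $fC_i\neq 0$ and the $fC_i$ pairwise non-isomorphic, you need each $fC_i$ to be \emph{indecomposable} (or, equivalently, that $\Hom_A(\overline{T},fC_i)$ is an indecomposable projective over the reduced endomorphism algebra); otherwise $|G|$ could exceed $|C|$ and the equality $|B_{(T,P)}|=|A|-|T|-|P|$ would not follow. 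One also needs the generator property (every object of the perpendicular category is a quotient of $(fC)^m$, which does follow from Ext-projectivity of $\overline{T}$ in $\Fac\overline{T}$ together with the factorisation of maps through the torsion-free quotient, but this is not spelled out) and compatibility of your definition $B_{(T,P)}:=\End_A(G)$ with the algebra appearing in the reduction bijection of the preceding theorem. In short: the strategy is the right one and matches Jasso's, but the two steps you defer are not routine bookkeeping --- they are the theorem.
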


In the following example we illustrate how to use the fact that there are two completions of any almost $\tau$-tilting pair into a $\tau$-tilting pair to obtain all $\tau$-tilting pairs of the algebra. In other words we use mutation starting at the $\tau$-tilting pair $(\bigoplus_{i=1}^n P(i),0)$ to obtain all other $\tau$-tilting pairs. We note that this is not possible for algebras with infinitely many $\tau$-tilting pairs.

\begin{exmp} \label{exmp:as3tiltingpairs}
    Consider the path algebra $A$ over $Q = \begin{tikzcd} 1 \arrow[r] & 2 \arrow[r] & 3 \arrow[ll,bend right] \end{tikzcd}$ modulo the square of the arrow ideal. To find all $\tau$-tilting pairs we start off at the $\tau$-tilting pair where the $\tau$-rigid part is the direct sum of the projectives and then mutate at each of the indecomposable direct summands which means deleting that summand and completing the remaining $\tau$-rigid pair to a $\tau$-tilting pair in the unique way different to the original pair according to \cref{theorem:twocompletions}. We get the following 14 $\tau$-tilting pairs in \cref{fig:mutationdiagram}.
    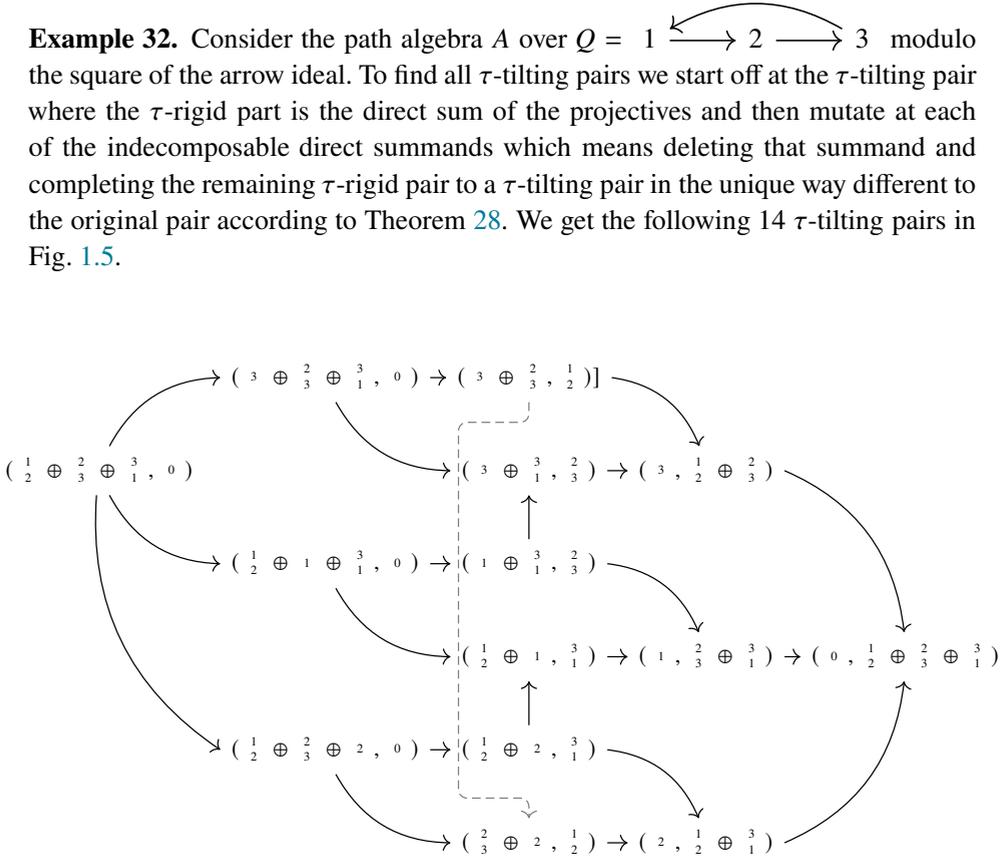
\begin{figure}[ht!] 
    \[ 
    \adjustbox{scale=0.8,center}{\begin{tikzcd}[ampersand replacement=\&,column sep = 0.75em, row sep=2em]
    \& ({\tiny \begin{array}{c} 3 \end{array}} \oplus {\tiny\begin{array}{cc} 2 \\ 3 \end{array}} \oplus {\tiny\begin{array}{cc} 3 \\1 \end{array}},{\tiny\begin{array}{cc} 0 \end{array}}) \arrow[r,start anchor = east, end anchor=west] \arrow[rd, bend right, end anchor=west] \&  ({\tiny \begin{array}{c} 3 \end{array}} \oplus {\tiny\begin{array}{cc} 2\\3 \end{array}}, {\tiny\begin{array}{cc} 1 \\ 2 \end{array}}) 
    \arrow[rd, bend left,start anchor = east] \arrow[ddddd, dashed, gray,
    rounded corners, to path={ -- ([yshift=-2ex]\tikztostart.south) -- ([yshift=-2ex,xshift=-7ex]\tikztostart.south)
    -- ([yshift=2ex,xshift=-7ex]\tikztotarget.north) -- ([yshift=2ex]\tikztotarget.north)
    -- (\tikztotarget)}] ] \\
    ({\tiny\begin{array}{cc} 1 \\ 2 \end{array}} \oplus {\tiny\begin{array}{cc} 2 \\ 3 \end{array}} \oplus {\tiny\begin{array}{cc} 3 \\1 \end{array}},{\tiny\begin{array}{cc} 0 \end{array}}) \arrow[ru,bend left, end anchor=west]\arrow[rd,bend right,end anchor=west] \arrow[rddd,bend right,, end anchor=west]\& \& ({\tiny \begin{array}{c} 3 \end{array}} \oplus {\tiny\begin{array}{cc} 3 \\1 \end{array}}, {\tiny\begin{array}{cc} 2 \\ 3 \end{array}}) \arrow[r, crossing over, start anchor = east, end anchor=west] \& ({\tiny \begin{array}{c} 3 \end{array}}, {\tiny\begin{array}{cc} 1 \\ 2 \end{array}}  \oplus {\tiny\begin{array}{cc} 2 \\ 3 \end{array}}) \arrow[rdd, bend left,start anchor = east] \\
     \&({\tiny\begin{array}{cc} 1 \\ 2 \end{array}} \oplus {\tiny \begin{array}{c} 1 \end{array}} \oplus {\tiny\begin{array}{cc} 3 \\1 \end{array}},{\tiny\begin{array}{cc} 0 \end{array}}) \arrow[r,start anchor = east, end anchor=west] \arrow[rd, bend right, end anchor=west] \& ({\tiny \begin{array}{c} 1 \end{array}} \oplus {\tiny\begin{array}{cc} 3 \\ 1 \end{array}}, {\tiny\begin{array}{cc} 2 \\ 3 \end{array}}) \arrow[rd, bend left, crossing over,start anchor = east] \arrow[u,end anchor = south, start anchor=north]\\
    \& \& ({\tiny\begin{array}{cc} 1 \\ 2 \end{array}} \oplus {\tiny \begin{array}{c} 1 \end{array}}, {\tiny\begin{array}{cc} 3 \\ 1 \end{array}}) \arrow[r, crossing over,start anchor = east, end anchor=west] \& ({\tiny\begin{array}{cc} 1 \end{array}},{\tiny\begin{array}{cc} 2 \\ 3 \end{array}} \oplus {\tiny\begin{array}{cc} 3 \\1 \end{array}}) \arrow[r, start anchor = east, end anchor=west] \& ({\tiny\begin{array}{cc} 0 \end{array}},{\tiny\begin{array}{cc} 1 \\ 2 \end{array}} \oplus {\tiny\begin{array}{cc} 2 \\ 3 \end{array}} \oplus {\tiny\begin{array}{cc} 3 \\1 \end{array}})\\
    \& ({\tiny\begin{array}{cc} 1 \\ 2 \end{array}} \oplus {\tiny\begin{array}{cc} 2 \\ 3 \end{array}} \oplus {\tiny \begin{array}{c} 2 \end{array}}, {\tiny\begin{array}{cc} 0 \end{array}}) \arrow[r,start anchor = east, end anchor=west] \arrow[rd, bend right, end anchor=west] \& ({\tiny\begin{array}{cc} 1 \\ 2 \end{array}} \oplus {\tiny \begin{array}{c} 2 \end{array}}, {\tiny\begin{array}{cc} 3 \\ 1 \end{array}}) \arrow[rd, bend left,crossing over,start anchor = east]  \arrow[u, end anchor = south, start anchor=north] \\
    \& \& ({\tiny\begin{array}{cc} 2 \\3 \end{array}} \oplus {\tiny \begin{array}{c} 2 \end{array}}, {\tiny\begin{array}{cc} 1 \\ 2 \end{array}}) \arrow[r] \& ({\tiny \begin{array}{c} 2 \end{array}}, {\tiny\begin{array}{cc} 1 \\ 2 \end{array}} \oplus {\tiny\begin{array}{cc} 3 \\ 1 \end{array}}) \arrow[ruu, bend right,swap, start anchor = east]
    \end{tikzcd}}
    \]
    \caption{Mutation of $\tau$-tilting pairs} \label{fig:mutationdiagram}
    \label{fig:A3mutation}
    \end{figure}
\end{exmp}

In fact, we may define a partial ordering on $\tau$-tilting pairs corresponding to \cref{defn:taumutation}, where we say that $(T,P) < (T',P')$ if $(T,P)$ a left mutation of $(T',P')$ in other words, if $\Fac T \subset \Fac T'$. The arrows in the mutation graph, \cref{fig:mutationdiagram}, are such that there is an arrow from $(T',P')$ to $(T,P)$ when $(T,P) < (T',P')$.

\begin{exmp}\label{exmp:A2Hasse}
Let us consider again the path algebra $A$ over the quiver $Q: \begin{tikzcd} 1 \arrow[r] & 2 \end{tikzcd}$, we demonstrated the bijection between $\tau$-tilting pairs and torsion classes in \cref{exmp:A2}. We have found the following two chains of inclusions
\[ \add \{ {\tiny\begin{array}{cc} 0 \end{array}} \} \subset \add\{ {\tiny\begin{array}{cc}  2 \end{array}} \} \subset A,\]
\[ \add \{{\tiny\begin{array}{cc} 0 \end{array}}\} \subset \add \{ {\tiny\begin{array}{cc} 1 \end{array}} \} \subset \add \{ 1 \oplus {\tiny\begin{array}{cc} 1 \\ 2 \end{array}} \} \subset A. \]
The induced partial order on the corresponding $\tau$-tilting pairs can be visualised in a Hasse quiver as follows:
\[
\begin{tikzcd}[ampersand replacement=\&,column sep = 1em, row sep=1.5em]
\& ( {\tiny\begin{array}{cc} 2 \end{array}} \oplus {\tiny\begin{array}{cc} 1\\2 \end{array}}, {\tiny\begin{array}{cc} 0 \end{array}}) \arrow[ldd] \arrow[rd] \\
\& \& ({\tiny\begin{array}{cc} 1 \end{array}} \oplus {\tiny\begin{array}{cc} 1\\2 \end{array}}, {\tiny\begin{array}{cc} 0 \end{array}}) \arrow[dd] \\
({\tiny\begin{array}{cc} 2 \end{array}}, {\tiny\begin{array}{cc} 1 \\2 \end{array}}) \arrow[rdd]  \\
\& \& ({\tiny\begin{array}{cc} 1 \end{array}}, {\tiny\begin{array}{cc} 2 \end{array}}) \arrow[ld] \\
\& ({\tiny\begin{array}{cc} 0 \end{array}}, {\tiny\begin{array}{cc} 2 \end{array}} \oplus {\tiny\begin{array}{cc} 1 \\ 2 \end{array}}) 
\end{tikzcd}
\]
\end{exmp}

\subsection{g-vectors}

Beside the dimension vector $\bdim M$ of an $A$-module $M$, we can also associate another integer vector to the module, called the \textit{$g$-vector} of $M$. The $g$-vector was first introduced by Fomin and Zelevinsky \cite{FominZelevinsky2007} in the context of cluster algebras. Later, it was shown that the $g$-vector is encoded in the projective presentation of $\tau$-rigid modules. It turns out that $g$-vectors of $\tau$-rigid pairs have a lot of underlying structure to them.

\begin{defn}
    Let $M \in \mods A$ be an $A$-module, and $P_{-1} \to P_0 \to M \to 0$ be a minimal projective presentation of $M$ where $P_0 = \bigoplus_{i=1}^n P(i)^{a_i}$ and $P_{-1} = \bigoplus_{i=1}^n P(i)^{b_i}$, then the \textit{$g$-vector} of $M$ is
    \[ g^M \coloneqq (a_1 - b_1, a_2-b_2, \dots, a_n-b_n).\]
\end{defn}

We call $g^T - g^P$ the $g$-vector of the $\tau$-rigid pair $(T,P)$. The following result is fundamental in the study of $g$-vectors and its idea can already be found in \cite{AR1985}, but was first shown by Dehy and Keller \cite{DehyKeller2010} by using a geometric approach in the context of 2-Calabi-Yau categories over an algebraically closed field. Later, Adachi, Iyama and Reiten \cite{AIR2014} adapted it to the language of $\tau$-tilting theory and Demonet, Iyama and Jasso \cite{DIJ2019} extended it to an arbitrary field. 

\begin{thm}\cite[Theorem 2.3]{DehyKeller2010} \cite[Theorem 6.5]{DIJ2019} \label{thm:isogvecs}
    Let $(T,P)$ and $(T',P')$ be $\tau$-rigid pairs then $g^T - g^P = g^{T'} - g^{P'}$ if and only if $T \cong T'$ and $P \cong P'$.
\end{thm}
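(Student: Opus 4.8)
The plan is to prove the statement by showing that the map $(T,P) \mapsto g^T - g^P$ is well-defined and injective; since the "if" direction (that isomorphic pairs have equal $g$-vectors) is immediate from uniqueness of minimal projective presentations, the content is the "only if" direction. First I would reduce to the case $P = P' = 0$: given $\tau$-rigid pairs $(T,P)$ and $(T',P')$ with $g^T - g^P = g^{T'} - g^{P'}$, one may pass to a suitable quotient algebra or, more directly, absorb the projective parts into $P_{-1}$-terms. Concretely, if $(T,P)$ is a $\tau$-rigid pair, choose a minimal projective presentation $P_{-1}^{T} \to P_0^{T} \to T \to 0$; then the pair $(T,P)$ has the same $g$-vector data as the complex $P_0^T \oplus 0 \to P_{-1}^T \oplus P \to 0$ sitting in the homotopy category of projectives. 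So the real claim is: a $\tau$-rigid pair is determined, up to isomorphism, by the class in $K_0(\mathrm{proj}\,A)$ of (the target minus the source of) its two-term minimal projective presentation.

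The key step is a linear-independence argument. For a basic $\tau$-tilting pair $(T,P)$ with $T = \bigoplus T_i$ and $P = \bigoplus P_j$, I would show that the $g$-vectors $\{g^{T_i}\} \cup \{-g^{P_j}\}$ form a $\mathbb{Z}$-basis of $\mathbb{Z}^n$. This follows the Dehy--Keller idea: rigidity forces a certain Euler-form pairing between $g$-vectors and dimension vectors to be "upper triangular" with respect to a mutation ordering, hence the Gram-type matrix is invertible. More precisely, I would use the bilinear form $\langle g^M, \bdim N \rangle$ defined via projective presentations (the Euler pairing $\langle [P_0] - [P_{-1}], [N]\rangle = \dim\Hom(P_0,N) - \dim\Hom(P_{-1},N)$), and exploit that $\Hom_A(T_i, \tau T_j) = 0$ together with the Auslander--Reiten formula $\overline{\Hom}(X, \tau Y) \cong D\,\Ext^1(Y,X)$ and a projective-presentation computation to control these pairings. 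Once the $g$-vectors of a $\tau$-tilting pair are known to be a basis, the general $\tau$-rigid case follows by completing $(T,P)$ and $(T',P')$ to $\tau$-tilting pairs (or by $\tau$-tilting reduction, \cref{thm:jassotauperp}) and comparing.

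From injectivity on the level of "which indecomposables appear," I would then recover the pair itself: if $g^T - g^P = g^{T'} - g^{P'}$ and both are $\tau$-rigid, complete each to a $\tau$-tilting pair; the $g$-vector lattices must then agree, and matching basis vectors forces $T_i \cong T'_{\sigma(i)}$ and $P_j \cong P'_{\tau(j)}$ via \cref{thm:isogvecs}'s "if" direction applied summand-by-summand (an indecomposable $\tau$-rigid module, resp. indecomposable projective, is determined by its $g$-vector because two indecomposables with the same minimal projective presentation are isomorphic). The main obstacle I anticipate is establishing the linear independence of $g$-vectors of a $\tau$-rigid pair in a field-independent way: the clean geometric proof of Dehy--Keller uses a $2$-Calabi--Yau category over an algebraically closed field, so one must instead argue directly in $\mods A$, replacing the CY-pairing by the Euler form on the two-term homotopy category $K^{[-1,0]}(\mathrm{proj}\,A)$ and carefully checking that $\tau$-rigidity (equivalently, that the corresponding presilting complexes have no negative self-extensions) yields the nondegeneracy needed — this is exactly the $\Hom_A(T,\tau T)=0 \Rightarrow$ "$g$-vectors independent" implication that must be extracted, e.g., via Skowroński's Lemma (\cref{lem:rigidindecsummands}) to get $|T|+|P|\le n$ and then a rank argument showing the $g$-vectors span a sublattice of full rank exactly when $|T|+|P|=n$.
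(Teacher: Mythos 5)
The paper itself gives no proof of \cref{thm:isogvecs} (it is quoted from Dehy--Keller and Demonet--Iyama--Jasso), so your proposal has to stand on its own, and as written it has a genuine gap at its central step. You reduce the problem to the basis statement \cref{thm:gvecbasis} (linear independence of the $g$-vectors of the indecomposable summands of a \emph{single} $\tau$-tilting pair) and then claim the general case follows by ``completing $(T,P)$ and $(T',P')$ to $\tau$-tilting pairs; the $g$-vector lattices must then agree, and matching basis vectors forces $T_i\cong T'_{\sigma(i)}$.'' This does not follow: every $\tau$-tilting pair yields a basis of $\Z^n$, so the lattices generated trivially coincide, but the two completions are unrelated and no identification of individual basis vectors is forced. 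Moreover the hypothesis is only an equality of the single vector $g^T-g^P=g^{T'}-g^{P'}$; to pass from it to a summand-by-summand matching you need both (i) that an indecomposable $\tau$-rigid pair is determined by its $g$-vector, and (ii) that the decomposition of the total $g$-vector of a $\tau$-rigid pair into $g$-vectors of its indecomposable summands is unique. Point (ii) is essentially the fan property \cref{lem:coneintersect}, which in the literature (and in this paper's exposition) is deduced \emph{from} \cref{thm:isogvecs}, so invoking it here is circular; and point (i) is not justified by your parenthetical remark that ``two indecomposables with the same minimal projective presentation are isomorphic'': equal $g$-vectors only give equal classes $[P_0]-[P_{-1}]$ in $K_0(\mathrm{proj}\,A)$, not equal presentations, and showing that for $\tau$-rigid objects this class determines the presentation (including the map) is precisely the content of the theorem.

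In short, the within-one-pair linear independence of \cref{thm:gvecbasis} is strictly weaker than the injectivity asserted in \cref{thm:isogvecs}, and the cited proofs use a different and harder mechanism which your sketch does not supply: Dehy--Keller argue by induction using exchange triangles to control indices of rigid objects in a $2$-Calabi--Yau category, and Demonet--Iyama--Jasso work with two-term presilting complexes over an arbitrary field, showing directly that equality of classes in $K_0(\mathrm{proj}\,A)$ forces an isomorphism of such complexes. Some such argument (an approximation/exchange-triangle induction, or the Derksen--Fei style cancellation for presentations) is the missing idea; your Euler-form ``upper triangularity'' step would at best reprove \cref{thm:gvecbasis}, which the paper in any case obtains later by the $c$-vector argument of \cref{thm:cvecsigncoh}.
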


It follows trivially that if two $\tau$-rigid modules $T$ and $T'$ share a $g$-vector $g^T = g^{T'}$, then $T \cong T'$. On top of that, the $g$-vectors of $\tau$-tilting pairs exhibit many desirable properties. The first of which is the following. 
\begin{thm} \cite[Theorem 5.1]{AIR2014} \label{thm:gvecbasis}
    If $(T,P)$ is a $\tau$-tilting pair then $\{ g^{T_i},-g^{P_i}\}$ is a basis of $\Z^n$.
\end{thm}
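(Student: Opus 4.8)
The statement asserts that the $n$ integer vectors $g^{T_1},\dots,g^{T_{|T|}},-g^{P_1},\dots,-g^{P_{|P|}}$, where $T=\bigoplus_i T_i$ and $P=\bigoplus_j P_j$ are the decompositions into (pairwise non-isomorphic) indecomposables and $|T|+|P|=n$, form a $\Z$-basis of $\Z^n$. I would split this into (a) showing these vectors are $\Z$-linearly independent, and (b) showing that the sublattice they span is all of $\Z^n$. Since there are exactly $n$ of them, (a) already makes the span a sublattice of finite index, so for (b) it suffices to show the index is $1$; conversely, $n$ vectors spanning $\Z^n$ automatically form a basis, since a surjective endomorphism of $\Z^n$ is injective. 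The plan is to obtain (a) from \cref{thm:isogvecs}, and to obtain (b) by reducing, through mutation, to the pair $(A,0)=(\bigoplus_{i=1}^n P(i),0)$, whose associated vectors are $g^{P(i)}=\mathbf{e}_i$ --- a projective module has minimal projective presentation $0\to P(i)\xrightarrow{\id}P(i)\to 0$ --- i.e.\ the standard basis.

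For (a), suppose $\sum_i c_i g^{T_i}-\sum_j d_j g^{P_j}=0$ with $c_i,d_j\in\Z$. Write each coefficient as the difference $c_i=c_i^+-c_i^-$, $d_j=d_j^+-d_j^-$ of its (disjointly supported) positive and negative parts, move the negative parts to the other side, and use that $g$ is additive on direct sums; the relation becomes
\[ g^{U}-g^{Q}=g^{U'}-g^{Q'}, \]
where $U=\bigoplus_i T_i^{\,c_i^+}$, $Q=\bigoplus_j P_j^{\,d_j^+}$, $U'=\bigoplus_i T_i^{\,c_i^-}$, $Q'=\bigoplus_j P_j^{\,d_j^-}$. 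Both $(U,Q)$ and $(U',Q')$ are $\tau$-rigid pairs: their module parts lie in $\add T$ and are $\tau$-rigid because direct summands of $\tau$-rigid modules are $\tau$-rigid, their other parts lie in $\add P$, and $\Hom_A(Q,U)\subseteq\Hom_A(P,T)=0$ (and likewise for the second pair). By \cref{thm:isogvecs} the equality of $g$-vectors of pairs forces $U\cong U'$ and $Q\cong Q'$; matching up indecomposable summands gives $c_i^+=c_i^-$ and $d_j^+=d_j^-$ for all $i,j$, and since these parts have disjoint support every $c_i$ and $d_j$ is zero. So the vectors are linearly independent.

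For (b), I would show that $\bigl|\det[\,g^{T_1}\mid\cdots\mid g^{T_{|T|}}\mid -g^{P_1}\mid\cdots\mid -g^{P_{|P|}}\,]\bigr|$ is unchanged under a mutation of $\tau$-tilting pairs. By \cref{theorem:twocompletions} a single mutation of $(T,P)$ replaces exactly one indecomposable summand $X$ by another indecomposable $Y$, keeping the remaining $n-1$ summands; the exchange sequence attached to the mutation --- a non-split extension relating $X$ and $Y$ whose middle term lies in the additive closure of the retained summands --- expresses $g^{Y}$ as $\varepsilon\,g^{X}$ plus an integer combination of the $g$-vectors of the retained summands, for some $\varepsilon\in\{\pm1\}$. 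Hence the two $g$-matrices differ by an integral column operation of determinant $\pm1$, so the displayed quantity is a mutation invariant. As it equals $1$ for $(A,0)$, and as every $\tau$-tilting pair is linked to $(A,0)$ by a finite chain of mutations \cite{AIR2014}, it always equals $1$; combined with (a), the $n$ vectors form a $\Z$-basis.

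I expect the genuinely delicate point to be (b): pinning down the effect of a single mutation on the $g$-vectors --- the sign $\varepsilon$, and the case distinction according to whether the exchanged summand is a module summand or a projective-shift summand --- and supplying the connectedness input both rest on the exchange machinery of \cite{AIR2014}. Two variants avoid the connectedness hypothesis. One identifies $(T,P)$ with a basic two-term silting complex, whose indecomposable summands have $K_0$-classes exactly the above $n$ vectors; these span $K_0\cong\Z^n$ because a silting complex generates the bounded homotopy category of projectives as a thick subcategory, and $n$ spanning vectors in $\Z^n$ are a basis. Alternatively one invokes the duality between $g$-vectors and $c$-vectors (the latter being the suitably signed dimension vectors of the bricks labelling the Hasse arrows at $\Fac T$ in the lattice of torsion classes), which likewise pins the index down to $1$.
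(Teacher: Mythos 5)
Your step (a) is sound: the possibly non-basic pairs $(U,Q)$ and $(U',Q')$ you construct are genuine $\tau$-rigid pairs, so \cref{thm:isogvecs} applies and gives $\Z$-linear independence. The genuine gap is in (b): your reduction to $(A,0)$ needs every $\tau$-tilting pair to be reachable from $(A,0)$ by a finite chain of mutations, and this is not a theorem of \cite{AIR2014}, nor is it known for a general algebra. \cref{theorem:twocompletions} only guarantees that each single mutation can be performed; connectedness of the exchange graph is only available under finiteness hypotheses (in \cite{AIR2014} one shows that a finite connected component must be the whole graph, so connectedness holds for $\tau$-tilting finite algebras), and the paper itself warns, right after \cref{fig:mutationdiagram}, that producing all $\tau$-tilting pairs by mutation from $(A,0)$ ``is not possible for algebras with infinitely many $\tau$-tilting pairs.'' Since independence of $n$ integer vectors only yields a finite-index sublattice, the unimodularity statement is exactly what your main argument leaves unproven; the local step is fine (an exchange triangle in the two-term picture gives $[Y]=-[X]+[E]$ with $E$ additively generated by the retained summands, an integral column operation of determinant $\pm1$), it is the global connectivity that is missing.

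Of your two fallback variants, the first (two-term silting) is essentially the original proof of \cite[Theorem 5.1]{AIR2014}, which quotes the Aihara--Iyama theorem that the indecomposable summands of a silting object give a basis of $K_0(K^b(\mathrm{proj}A))$; but as you phrase it, ``generates the thick subcategory, hence the classes span'' is too quick, because the subgroup generated by the $[S_i]$ is closed under shifts and cones but not obviously under passing to direct summands, so spanning needs the filtration of any two-term complex by $\add S$ and $\add S[1]$ (or the cited theorem) rather than thickness alone. The second variant is precisely the route this paper takes: \cref{thm:cvecsigncoh} proves the statement by pairing the $g$-matrix with the dimension vectors of the bricks attached to the facets $\Ccal_{(T,P)_\ell}$, the key input being \cite[Lemma 3.3]{Treffinger2019} that $G_{(T,P)}^T\cdot(\bdim B_1|\cdots|\bdim B_n)$ is diagonal with entries $\pm1$; invoking that duality is therefore not an independent argument but an appeal to the paper's own proof.
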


The following theorem was proven by Auslander and Reiten in the 1980s, but with the development of $\tau$-tilting theory has suddenly become a key result in this field.
Here recall from \cref{sec:prelim}
that \(\langle v, w \rangle = v^T D_A w\).

\begin{thm} \cite[Theorem 1.4]{AR1985} \label{thm:gvectorsplit}
    Let $M,N \in \mods A$, then
    \[ \langle g^M, \bdim N \rangle = \dim_K \Hom_A(M,N) - \dim_K \Hom_A(N, \tau M).\]
\end{thm}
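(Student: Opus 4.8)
The plan is to reduce both sides of the identity to $K$-dimension counts coming from a minimal projective presentation $P_{-1} \xrightarrow{f} P_0 \to M \to 0$ of $M$, and then to identify the discrepancy term as the $K$-dual of a tensor product with the transpose of $M$. For the left-hand side, note that since $P(i) = e_iA$, evaluation at $e_i$ gives a natural isomorphism $\Hom_A(P(i), N) \cong Ne_i$; computing $\dim_K Ne_i$ along a composition series of $N$ and using that $\Hom_A(P(i), S(j)) = 0$ for $i \neq j$ while $\Hom_A(P(i), S(i)) \cong \End_A(S(i))$, one obtains $\dim_K \Hom_A(P(i), N) = (\bdim N)_i \cdot \dim_K \End_A(S(i))$. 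Summing over the indecomposable summands of $P_0 = \bigoplus_i P(i)^{a_i}$ and $P_{-1} = \bigoplus_i P(i)^{b_i}$ and comparing with the definition $\langle v, w \rangle = v^T D_A w$ gives
\[ \langle g^M, \bdim N \rangle = \dim_K \Hom_A(P_0, N) - \dim_K \Hom_A(P_{-1}, N). \]

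Next I would apply the left-exact contravariant functor $\Hom_A(-, N)$ to the presentation, obtaining the exact sequence
\[ 0 \to \Hom_A(M, N) \to \Hom_A(P_0, N) \xrightarrow{\Hom_A(f,N)} \Hom_A(P_{-1}, N) \to \coker \Hom_A(f, N) \to 0, \]
so that $\dim_K \Hom_A(P_0, N) - \dim_K \Hom_A(P_{-1}, N) = \dim_K \Hom_A(M, N) - \dim_K \coker \Hom_A(f, N)$. Combining this with the previous display, it remains only to prove $\dim_K \coker \Hom_A(f, N) = \dim_K \Hom_A(N, \tau M)$.

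For this I would pass through the transpose. Write $(-)^\ast = \Hom_A(-, A)$, so that $\operatorname{Tr} M = \coker\big(\Hom_A(f, A)\colon P_0^\ast \to P_{-1}^\ast\big)$ is a left $A$-module and $\tau M = D \operatorname{Tr} M$ with $D = \Hom_K(-, K)$. The chain of natural isomorphisms $\Hom_A(e_iA, N) \cong Ne_i \cong N \otimes_A Ae_i \cong N \otimes_A (e_iA)^\ast$ is compatible with morphisms between the $P(i)$, so applying the right-exact functor $N \otimes_A -$ to $P_0^\ast \to P_{-1}^\ast \to \operatorname{Tr} M \to 0$ identifies $\coker \Hom_A(f, N) \cong N \otimes_A \operatorname{Tr} M$. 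Finally, the $K$-linear tensor-hom adjunction gives $D(N \otimes_A \operatorname{Tr} M) \cong \Hom_A(N, D\operatorname{Tr} M) = \Hom_A(N, \tau M)$, and since $D$ preserves $K$-dimension this supplies the missing equality and hence the formula.

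The main obstacle is bookkeeping rather than any deep idea: one must track carefully which modules are left and which are right $A$-modules, since $(-)^\ast$, $N \otimes_A -$ and $D$ each flip the side, and one must check that $\Hom_A(e_iA, -) \cong (-) \otimes_A Ae_i$ is natural enough to pass to cokernels of the chosen presentation. It is also worth noting that non-minimality of the presentation would only add isomorphic direct summands to $\Hom_A(f, A)$ and to $\Hom_A(f, N)$, changing neither $g^M$ nor $\coker \Hom_A(f, N)$, so the argument is insensitive to that choice.
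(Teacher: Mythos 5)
Your argument is correct, and it in fact does more than the proof printed in the paper. The first step --- computing $\dim_K\Hom_A(P(i),N)=\dim_K\End_A(S(i))\,(\bdim N)_i$ and concluding $\langle g^M,\bdim N\rangle=\dim_K\Hom_A(P_0,N)-\dim_K\Hom_A(P_{-1},N)$ --- is exactly what the paper's proof establishes, and the paper essentially stops there, verifying the formula at the level of the projective presentation and leaving the homological half to the original reference. You supply that missing half: applying $\Hom_A(-,N)$ to the minimal presentation, identifying $\coker\Hom_A(f,N)\cong N\otimes_A\operatorname{Tr}M$ via the natural isomorphism $\Hom_A(P,N)\cong N\otimes_A\Hom_A(P,A)$ for finitely generated projectives, and then using $D(N\otimes_A\operatorname{Tr}M)\cong\Hom_A(N,D\operatorname{Tr}M)=\Hom_A(N,\tau M)$; this is the classical Auslander--Reiten argument and it is sound, including the side-switching bookkeeping you flag. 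One small caveat: your closing remark that non-minimality of the presentation is harmless is not quite accurate, because a non-minimal presentation may also contain summands of the form $Q'\to 0$ in degree $-1$, and these change both the vector read off from the presentation and $\coker\Hom_A(f,N)$ (only summands of the form $Q\xrightarrow{\id}Q$ are harmless). This does not affect your proof, since $g^M$ and $\operatorname{Tr}M$ (hence $\tau M$) are by definition computed from the minimal presentation, which is the one you work with; minimality is genuinely used in identifying the cokernel with $N\otimes_A\operatorname{Tr}M$.
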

\begin{proof}
    Let $P_{-1} \to P_0 \to M$ be a minimal projective presentation of $M$. 
    Let us write $P_0 = \bigoplus_{i=1}^n P(i)^{a_i}$ and $P_{-1} = \bigoplus_{i=1}^n P(i)^{b_i}$. 
    Since \(\langle -, - \rangle\) is bilinear we have that
     \[ \langle g^M, \bdim N \rangle= \sum_{i=1}^n (a_i - b_i) \dim_K \Hom_A (P(i),N).\]
    We know that $\dim_K \Hom_A(P(i),N) = \dim_K \End_A(S(i)) \dim_K N_i$, where $N_i$ is the vector space at vertex $i$ of the representation $N$. Furthermore $g^{P(i)} = \boldsymbol{e}_i$, where $\boldsymbol{e}_i$ has nonzero entry equal to 1 only in the $i$-th position. Therefore,
    \[ \langle g^{P(i)}, \bdim N \rangle= \langle \boldsymbol{e}_i, \bdim N \rangle = \dim_K \End_A(S(i)) \dim_K N_i = \dim_K \Hom_A (P(i),N).\]
\end{proof}

Let us now study the arrangement of $g$-vectors in $\R^n$. In particular, we consider the cones spanned by $g$-vectors of $\tau$-rigid objects in the following manner.

\begin{defn}
    Let $(T,P)$ be a $\tau$-rigid pair so $T= \bigoplus_{i=1}^k T_i$ and $P = \bigoplus_{i=k+1}^t P_i$ for some $t \leq n$, then we define the polyhedral cone $\Ccal_{(T,P)}$ to be given by
    \[ \Ccal_{(T,P)} = \left\{ \sum_{i=1}^k \alpha_i g^{T_i} -  \sum_{j=k+1}^t \alpha_j g^{P_j}: \alpha_i \geq 0 \text{ for all } 1 \leq i \leq t \right\} \subseteq \R^n.\]
    The interior cone $\Ccal_{(T,P)}^o$ is defined as
    \[ \Ccal_{(T,P)}^o = \left\{ \sum_{i=1}^k \alpha_i g^{T_i} -  \sum_{j=k+1}^t \alpha_j g^{P_j}: \alpha_i >0  \text{ for all } 1 \leq i \leq t \right\} \subseteq \R^n.\]
\end{defn}

Demonet, Iyama and Jasso \cite{DIJ2019} used \cref{thm:isogvecs} to show that cones of $g$-vectors of $\tau$-rigid pairs form what is called a \textit{polyhedral fan}. 

\begin{thm} \label{lem:coneintersect}\cite[Corollary 6.7]{DIJ2019} Let $(T_1, P_1), (T_2,P_2)$ be two $\tau$-rigid pairs. Let $(T,P)$ be the maximal common direct summand of $(T_1,P_1)$ and $(T_2,P_2)$, then $\Ccal_{(T_1,P_1)} \cap \Ccal_{(T_2,P_2)} = \Ccal_{(T,P)}$.
\end{thm}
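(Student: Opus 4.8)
The plan is to prove the two inclusions separately. The inclusion $\Ccal_{(T,P)} \subseteq \Ccal_{(T_1,P_1)} \cap \Ccal_{(T_2,P_2)}$ is immediate: since $(T,P)$ is a direct summand of each $(T_i,P_i)$, the generating rays of $\Ccal_{(T,P)}$ form a subset of the generating rays of $\Ccal_{(T_i,P_i)}$, so the nonnegative span $\Ccal_{(T,P)}$ sits inside each $\Ccal_{(T_i,P_i)}$. All the work goes into the reverse inclusion.

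I would first record two structural facts. \emph{(i)} For any $\tau$-rigid pair $(S,Q)$ the cone $\Ccal_{(S,Q)}$ is \emph{simplicial}: completing $(S,Q)$ to a $\tau$-tilting pair and applying \cref{thm:gvecbasis}, the vectors $\{g^{S_i}\}\cup\{-g^{Q_j}\}$ extend to a $\Z$-basis of $\Z^n$; in particular they are linearly independent, and they span a saturated sublattice of $\Z^n$. Hence the faces of $\Ccal_{(S,Q)}$ are precisely the cones $\Ccal_{(S',Q')}$ with $(S',Q')$ a direct summand of $(S,Q)$, and each point of $\Ccal_{(S,Q)}$ lies in the relative interior $\Ccal_{(S',Q')}^o$ of exactly one of these faces. \emph{(ii)} (Disjointness of relative interiors.) If $(S_1,Q_1)$ and $(S_2,Q_2)$ are $\tau$-rigid pairs with $\Ccal_{(S_1,Q_1)}^o \cap \Ccal_{(S_2,Q_2)}^o \neq \emptyset$, then $(S_1,Q_1)\cong(S_2,Q_2)$. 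Statement (ii) is the heart of the matter and, I expect, the main obstacle.

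For (ii), choose a point in the intersection of the two relative interiors. This intersection is a nonempty set which is relatively open in the rational subspace $\mathrm{span}_\R\Ccal_{(S_1,Q_1)}\cap\mathrm{span}_\R\Ccal_{(S_2,Q_2)}$, so it contains a rational point; rescaling by a positive integer (which preserves membership in both cones and their relative interiors) we obtain a point $x\in\Z^n$ lying in both relative interiors. By the saturation statement of (i), $x$ is an \emph{integral} combination $x=\sum_i a_i g^{S_{1,i}}-\sum_j b_j g^{Q_{1,j}}$, and since $x$ lies in the relative interior all $a_i,b_j$ are $\geq 1$. Then $S_1^{(a)}\coloneqq\bigoplus_i S_{1,i}^{a_i}$ and $Q_1^{(b)}\coloneqq\bigoplus_j Q_{1,j}^{b_j}$ form a $\tau$-rigid pair (passing to direct sums of copies preserves $\tau$-rigidity and the vanishing $\Hom(Q_1,S_1)=0$, and $Q_1^{(b)}$ is still projective) with $g$-vector $g^{S_1^{(a)}}-g^{Q_1^{(b)}}=x$. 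Doing the same with $(S_2,Q_2)$ and applying \cref{thm:isogvecs} yields $S_1^{(a)}\cong S_2^{(c)}$ and $Q_1^{(b)}\cong Q_2^{(d)}$; comparing indecomposable summands (Krull--Schmidt) gives $\add S_1=\add S_2$ and $\add Q_1=\add Q_2$, and since all pairs are basic, $(S_1,Q_1)\cong(S_2,Q_2)$.

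It remains to assemble the reverse inclusion. Let $x\in\Ccal_{(T_1,P_1)}\cap\Ccal_{(T_2,P_2)}$. By (i), $x$ lies in $\Ccal_{(T_1',P_1')}^o$ for a unique direct summand $(T_1',P_1')$ of $(T_1,P_1)$ and in $\Ccal_{(T_2',P_2')}^o$ for a unique direct summand $(T_2',P_2')$ of $(T_2,P_2)$. By (ii), $(T_1',P_1')\cong(T_2',P_2')$; being (up to isomorphism) a common direct summand of $(T_1,P_1)$ and $(T_2,P_2)$, it is a direct summand of their maximal common direct summand $(T,P)$ by uniqueness of Krull--Schmidt decompositions. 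Hence $x\in\Ccal_{(T_1',P_1')}\subseteq\Ccal_{(T,P)}$, which finishes the proof. The only subtle point is (ii): one must take care to extract an \emph{integral} point from the intersection of the relative interiors and to know that the $g$-vectors of a $\tau$-rigid pair cut out a saturated sublattice --- both consequences of \cref{thm:gvecbasis} --- after which \cref{thm:isogvecs} does the rest.
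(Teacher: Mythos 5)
The notes never prove this theorem themselves: it is quoted from [DIJ2019, Corollary 6.7], with only the surrounding remark that Demonet--Iyama--Jasso deduce it from \cref{thm:isogvecs}; so there is no in-text proof to compare against. Your reconstruction is correct, and it follows precisely the route the notes attribute to the source: simpliciality and saturation of the cones (completion to a $\tau$-tilting pair plus \cref{thm:gvecbasis}) reduce everything to the injectivity of $g$-vectors on $\tau$-rigid pairs, i.e.\ \cref{thm:isogvecs}, applied after clearing denominators. Three ingredients you use silently are worth naming, though none is a gap: (a) every $\tau$-rigid pair can be completed to a $\tau$-tilting pair (Bongartz-type completion; within these notes this can be extracted from Jasso's reduction theorem, since $\mods B_{(T,P)}$ always contains a $\tau$-tilting pair); (b) $g$-vectors are additive on direct sums, because minimal projective presentations are, which is what gives $g^{S_1^{(a)}}-g^{Q_1^{(b)}}=x$; (c) \cref{thm:isogvecs} is applied to the non-basic pairs $\bigl(S_1^{(a)},Q_1^{(b)}\bigr)$ and $\bigl(S_2^{(c)},Q_2^{(d)}\bigr)$ --- the statement as formulated in the notes (and in the original sources) imposes no basicness, so this is legitimate, and it is exactly where the integrality of $x$ is needed. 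Your treatment of the genuinely delicate points is sound: the intersection of the two relative interiors is open in the rational subspace $\mathrm{span}_\R\Ccal_{(S_1,Q_1)}\cap\mathrm{span}_\R\Ccal_{(S_2,Q_2)}$, hence contains a rational point, and positive rescaling preserves both relative interiors; since the generating $g$-vectors extend to a $\Z$-basis, the unique coefficients of an integral point are integers, so strict positivity forces them to be at least $1$; and the final assembly through the unique face of a simplicial cone whose relative interior contains a given point, followed by Krull--Schmidt to land inside the maximal common direct summand, works, including the degenerate case $x=0$.
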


As Brüstle, Smith and Treffinger \cite{BST2019} have shown, one can express the category of $v$-semistable $A$-modules locally in terms of $\tau$-rigid pairs in the following way.
\begin{thm} \cite[Proposition 3.13]{BST2019} \label{thm:ssmodsequiv} Let $(T,P)$ be a $\tau$-rigid pair, then 
    \[\mods_v^{ss} A = T^\perp \cap {^\perp \tau T \cap P^\perp} \text{ for all } v \in \Ccal_{(T,P)}^o.\]
\end{thm}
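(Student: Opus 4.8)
The plan is to work directly from the definition of $v$-semistability, converting every inner product $\langle v, \bdim(-)\rangle$ into Hom-space dimensions via the Auslander--Reiten formula \cref{thm:gvectorsplit}. Write $T = \bigoplus_{i=1}^k T_i$ and $P = \bigoplus_{j=k+1}^t P_j$, so that $v \in \Ccal^o_{(T,P)}$ means $v = \sum_{i=1}^k \alpha_i g^{T_i} - \sum_{j=k+1}^t \alpha_j g^{P_j}$ with every coefficient $\alpha_\ell > 0$. Since each $P_j$ is projective, $\tau P_j = 0$, so \cref{thm:gvectorsplit} together with bilinearity of $\langle-,-\rangle$ gives, for every $X \in \mods A$,
\[
\langle v, \bdim X\rangle = \sum_{i=1}^k \alpha_i \dim_K\Hom_A(T_i, X) - \sum_{i=1}^k \alpha_i \dim_K\Hom_A(X, \tau T_i) - \sum_{j=k+1}^t \alpha_j \dim_K\Hom_A(P_j, X).
\]
This identity, call it $(\star)$, drives both inclusions.

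For the inclusion $T^\perp \cap {}^\perp\tau T \cap P^\perp \subseteq \mods^{ss}_v A$, take $N$ in the right-hand category, say $N \neq 0$. Applying $(\star)$ to $X = N$: the first and third sums vanish since $N \in T^\perp \cap P^\perp$, and the middle sum vanishes since $N \in {}^\perp\tau T$, so $\langle v, \bdim N\rangle = 0$. For a nonzero proper submodule $L \subseteq N$, left exactness of $\Hom_A(T_i,-)$ and $\Hom_A(P_j,-)$ embeds $\Hom_A(T_i,L)$ and $\Hom_A(P_j,L)$ into $\Hom_A(T_i,N)=0$ and $\Hom_A(P_j,N)=0$, so those terms drop out of $(\star)$ for $X=L$, leaving $\langle v, \bdim L\rangle = -\sum_i \alpha_i\dim_K\Hom_A(L,\tau T_i) \le 0$. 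Hence $N$ is $v$-semistable.

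For the reverse inclusion, let $N$ be $v$-semistable with $N \neq 0$; the key step is to show $N \in T^\perp$. Consider the canonical sequence $0 \to tN \to N \to fN \to 0$ for the torsion pair $(\Fac T, T^\perp)$, which is a torsion pair because $T$ is $\tau$-rigid (by \cite[Theorem 5.10]{AuslanderSmalo1980}, with the torsion-free class identified as $(\Fac T)^\perp = T^\perp$ via \cref{prop:torsionIFF}). Since $\Fac T \subseteq {}^\perp\tau T \cap P^\perp$ by \cite[Corollary 2.13]{AIR2014}, we have $\Hom_A(tN,\tau T_i)=0$ and $\Hom_A(P_j,tN)=0$, so $(\star)$ for $X = tN$ collapses to $\langle v, \bdim tN\rangle = \sum_i \alpha_i\dim_K\Hom_A(T_i,tN) \ge 0$; but $tN$ is a submodule of the $v$-semistable $N$, so $\langle v, \bdim tN\rangle \le 0$. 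Thus the quantity is $0$, and positivity of the $\alpha_i$ forces $\Hom_A(T_i,tN)=0$ for all $i$, i.e. $\Hom_A(T,tN)=0$; an epimorphism $T^{\oplus r}\twoheadrightarrow tN$ witnessing $tN \in \Fac T$ is then the zero map, so $tN = 0$ and $N \in T^\perp$. Now $(\star)$ for $X = N$ reads $\langle v, \bdim N\rangle = -\sum_i \alpha_i \dim_K\Hom_A(N,\tau T_i) - \sum_j \alpha_j\dim_K\Hom_A(P_j,N)$, and $v$-semistability forces the left side to be $0$; since each $\alpha_\ell > 0$ and each dimension is non-negative, every term vanishes, giving $N \in {}^\perp\tau T \cap P^\perp$, hence $N \in T^\perp\cap{}^\perp\tau T\cap P^\perp$.

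The one genuinely delicate point is the step in the second inclusion establishing $N \in T^\perp$; after that, everything is bookkeeping with $(\star)$ and sign considerations. Its essential inputs are that $(\Fac T, T^\perp)$ is a torsion pair for $\tau$-rigid $T$ and that $\Fac T \subseteq {}^\perp\tau T\cap P^\perp$, which together trap $\langle v, \bdim tN\rangle$ between a non-negative expression (from $(\star)$) and a non-positive one (from $tN$ being a submodule of a semistable module). One could alternatively route both inclusions through \cref{prop:TFalternatedef}, by identifying $\overline{\Tcal}_v$ with ${}^\perp\tau T\cap P^\perp$ and $\overline{\Fcal}_v$ with $T^\perp$ for $v\in\Ccal^o_{(T,P)}$, but the direct argument above sidesteps having to set up those identifications.
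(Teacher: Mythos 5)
Your proposal is correct and follows essentially the same route as the paper: both convert $\langle v,\bdim(-)\rangle$ into Hom-dimensions via \cref{thm:gvectorsplit}, prove $N\in T^\perp$ by trapping $\langle v,\bdim tN\rangle$ between the non-negative expression coming from the torsion pair $(\Fac T, T^\perp)$ together with $\Fac T\subseteq{}^\perp\tau T\cap P^\perp$ and the non-positive one from semistability, and then use positivity of the $\alpha_\ell$ to force the remaining Hom-spaces to vanish. The only cosmetic difference is that in the easy inclusion you also annihilate the $\Hom(P_j,L)$ terms via left exactness, where the paper simply keeps them as non-positive.
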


\begin{proof} Let $v \in \Ccal_{(T,P)}^o$ and $(T,P)$ be a $\tau$-rigid pair. We split the proof into showing that one side is contained in the other and vice versa. Throughout we make use of the identity
\begin{align*}
\langle v, \bdim M \rangle &= \sum_{i=1}^k \alpha_i \langle g^{T_i}, \bdim M \rangle - \sum_{k}^t \alpha_j \langle g^{P_j}, \bdim M \rangle \\
&= \sum_{i=1}^k \alpha_i \Hom(T_i,M) - \sum_{i=1}^k \alpha_i \Hom(M,\tau T_i) - \sum_{j=k+1}^t \Hom (P_j,M)
\end{align*}
derived from the definition of the cone corresponding to the $\tau$-rigid pair $(T,P)$ and \cref{thm:gvectorsplit}.
\begin{enumerate}
    \item Let $M \in \mods_v^{ss} A$. Then we know $(\Fac T, T^\perp)$ is a torsion pair i.e. there exists a short exact sequence
    \[ 0 \to \underbrace{tM}_{\in \Fac T} \to M \to \underbrace{fM}_{\in T^\perp} \to 0.\]
    Since $tM \to M$ is an injection and $M$ is $v$-semistable we know that $\langle V, \bdim tM \rangle \leq 0$ and therefore
    \begin{align*}
        0 &\geq \langle v, \bdim tM \rangle \\
        & = \sum_{i=1}^k \alpha_i \dim \Hom (T_i, tM) - \underbrace{\sum_{i=1}^k \alpha_i \dim \Hom (tM,\tau T_i)}_{=\;0} \\
        &\phantom{= \sum_{i=1}^k \alpha_i \dim \Hom (T_i, tM)} - \underbrace{\sum_{j=k+1}^t \alpha_j \dim \Hom( P_j, tM)}_{=\;0} \\
        &= \sum_{i=1}^k \alpha_i \dim \Hom(T_i, tM)
    \end{align*}
    where the last two expressions equal zero because $(T,P)$ is $\tau$-rigid. In particular $\Hom(T,\tau T)=0$ and $\Hom(P,T)=0$. Furthermore, since $\alpha_i > 0$ and $\bdim \Hom_A{(T_i,tM)}\geq 0$ this implies $\Hom(T,tM)=0$. Therefore $tM \cong 0$ and thus $\Hom(T,M)=0$ which means $M \in T^{\perp}$.  \\

    From this it immediately follows that the semistable object $M$ satisfies
    \[ 0 = \langle v, \bdim M \rangle = - \sum_{i=1}^k \alpha_i \dim \Hom(M, \tau T_i) - \sum_{j=k+1}^t \alpha_j \dim \Hom (P_j,M). \]
    So that again, since $\alpha_i > 0$ for all $1 \leq i \leq t$ we get $\Hom(M,\tau T) = 0$ and $\Hom(P, M)=0$. In other words $M \in {^\perp} \tau T \cap P^\perp$ and thus combining with the we obtain 
    \[M \in {T^\perp \cap {^\perp \tau T} \cap P^\perp}. \]
    \item Let $M \in {T^\perp \cap {^\perp \tau T} \cap P^\perp}$, then we immediately get
    \begin{align*}
    \langle v, \bdim M \rangle & =  \sum_{i=1}^k \alpha_i \dim \Hom (T_i, tM) \\
    &- \sum_{i=1}^k \alpha_i \dim \Hom (M,\tau T_i) \\
    &- \sum_{j=k+1}^t \alpha_j \dim \Hom( P_j, M) =0 
    \end{align*}
    since each of the individual terms lie in the respective $\Hom$-perpendicular classes. Consider now $L \hookrightarrow M \in {T^\perp \cap {^\perp \tau T} \cap P^\perp} \subset T^\perp$. Since $L$ is a submodule of $M$ it follows that $L \in T^\perp$. Therefore
    \begin{align*}
        \langle v, \bdim L \rangle & =  \underbrace{\sum_{i=1}^k \alpha_i \dim \Hom (T_i, L)}_{=0} \\
        &- \sum_{i=1}^k \alpha_i \dim \Hom (L,\tau T_i) \\
        &- \sum_{j=k+1}^t \alpha_j \dim \Hom( P_j, L) \leq 0 
    \end{align*}
    as required. Thus $M \in \mods_{v}^{ss}A$. 
\end{enumerate}
Combining the two halves we obtain $\mods_v^{ss} A = {T^\perp \cap {^\perp \tau T} \cap P^\perp}$. 
\end{proof}

\begin{exmp} \label{exmp:A2gvecs}
    Take $Q= \begin{tikzcd} 1 \arrow[r] & 2 \end{tikzcd}$ then the Auslander-Reiten quiver is the given above in \cref{exmp:A2}. The projective presentations of the three modules ${\tiny \begin{array}{c} 2 \end{array}} = P(2), {\tiny{\begin{array}{cc} 1 \\2 \end{array}}} = P(1), {\tiny \begin{array}{c} 1 \end{array}} = I(1)$ are as follows
    \[ {\tiny\begin{array}{cc} 0 \end{array}} \to \underbrace{P(1)^0 \oplus P(2)^1}_{=\;2} \to {\tiny \begin{array}{c} 2 \end{array}} \to {\tiny\begin{array}{cc} 0 \end{array}} \quad \Longrightarrow \quad g^{2} = (0-0,1-0)=(0,1),\]
    \[ {\tiny\begin{array}{cc} 0 \end{array}} \to \underbrace{P(1)^1 \oplus P(2)^0}_{=\tiny{\begin{array}{cc} 1 \\2 \end{array}}} \to {\tiny{\begin{array}{cc} 1 \\2 \end{array}}} \to {\tiny\begin{array}{cc} 0 \end{array}} \quad \Longrightarrow \quad g^{\tiny{\begin{array}{cc} 1 \\2 \end{array}}} = (1-0,0-0)=(1,0),\]
    \[ P(1)^0 \oplus P(2)^1 \to P(1)^1 \oplus P(2)^0 \to {\tiny \begin{array}{c} 1 \end{array}} \to {\tiny\begin{array}{cc} 0 \end{array}} \quad \Longrightarrow \quad g^1 = (1-0,0-1)=(1,-1).\]
\end{exmp}
The set of $g$-vectors of the indecomposable $\tau$-rigid pairs is therefore given by
\[
\begin{tikzpicture}
  \draw[->] (0, 0) -- (1.5, 0) node[right] {$g^{\tiny{\begin{array}{cc} 1 \\2 \end{array}}}$};
  \draw[->] (0, 0) -- (0, 1.5) node[above] {$g^2$};
  \draw[->] (0, 0) -- (-1.5, 0) node[left] {$-g^{\tiny{\begin{array}{cc} 1 \\2 \end{array}}}$};
  \draw[->] (0, 0) -- (0, -1.5) node[below] {$-g^2$};
  \draw[->] (0,0) -- (1.5,-1.5) node[right] {$g^1$};
\end{tikzpicture}
\]

\section{From $\tau$-tilting theory to wall-and-chamber structures}
Recall, that $K_0(A)$ is the Grothendieck group of an algebra $A$, the free abelian group isomorphic to $\Z^n$ having as basis the set $\{ [S(1)], [S(2)], \dots, [S(n)] \}$ of isomorphism classes of simple right $A$-modules. Recall from \cref{thm:ssmodsequiv} that if $(T,P)$ is a $\tau$-rigid pair then we have an equivalence of categories
\[ \mods_v^{ss} A = T^\perp \cap {^\perp \tau T \cap P^\perp}\]
for every $v \in \Ccal_{(T,P)}^o$. The following result tells us how many $v$-stable modules there are when $v$ is in the interior cone of a $\tau$-rigid pair $(T,P)$.

\begin{prop} \cite[Theorem 3.14]{BST2019} \label{thm:numbervssmods} Let $(T,P)$ be a $\tau$-rigid pair and let $v \in \Ccal_{(T,P)}^o$. Then there are exactly
\[\rank \left(K_0 \left(T^\perp \cap {^\perp \tau T} \cap P^\perp \right) \right) = |A| - |T| - |P|\]
$v$-stable modules. In particular, there are no nonzero $v$-stable modules when $(T,P)$ is $\tau$-tilting.
\end{prop}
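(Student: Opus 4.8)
The plan is to reduce the count of $v$-stable modules to the computation of the rank of a Grothendieck group, and then identify that group via the two structural results already quoted, namely \cref{thm:ssmodsequiv} and Jasso's \cref{thm:jassotauperp}. First I would recall that by \cite[Proposition 2.20]{BSTStability} the subcategory $\mods_v^{ss}A$ is wide, hence an abelian category with exact structure inherited from $\mods A$. By Rudakov's theorem every object of $\mods_v^{ss}A$ admits a filtration with $v$-stable subquotients whose multiset is an invariant of the object, and by the lemma identifying the (relatively) simple objects of $\mods_v^{ss}A$ with the $v$-stable modules, this says precisely that $\mods_v^{ss}A$ is a length abelian category whose simple objects are exactly the $v$-stable modules. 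Consequently $K_0(\mods_v^{ss}A)$ is free abelian on the set of isomorphism classes of $v$-stable modules, so the number of $v$-stable modules is exactly $\rank K_0(\mods_v^{ss}A)$.

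Next, since $v \in \Ccal_{(T,P)}^o$, \cref{thm:ssmodsequiv} identifies $\mods_v^{ss}A$ with the $\tau$-perpendicular category $T^\perp \cap {}^\perp\tau T \cap P^\perp$ as abelian subcategories of $\mods A$, which gives the first equality $\rank K_0(\mods_v^{ss}A) = \rank K_0\bigl(T^\perp \cap {}^\perp\tau T \cap P^\perp\bigr)$. For the second equality I would invoke \cref{thm:jassotauperp}: there is an equivalence of categories $\mods B_{(T,P)} \simeq T^\perp \cap {}^\perp\tau T \cap P^\perp$, and any equivalence between abelian categories is exact and sends simple objects to simple objects, so it induces an isomorphism on Grothendieck groups. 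Since $B_{(T,P)}$ is a finite-dimensional algebra, $\mods B_{(T,P)}$ is a length category with $\rank K_0(\mods B_{(T,P)})$ equal to the number of simple $B_{(T,P)}$-modules, which equals the number of indecomposable projectives $|B_{(T,P)}|$. Combining with $|B_{(T,P)}| = |A| - |T| - |P|$ from \cref{thm:jassotauperp} finishes the count. For the last assertion, when $(T,P)$ is $\tau$-tilting we have $|T| + |P| = |A|$, so the count is $0$; moreover by \cref{thm:gvecbasis} the cone $\Ccal_{(T,P)}$ is full-dimensional, so $\Ccal_{(T,P)}^o \neq \emptyset$ and the statement genuinely applies (to every $v$ in that interior cone).

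I expect the only genuinely delicate points to be the bookkeeping in the first two steps: making sure that ``relatively simple in $\mods_v^{ss}A$'' is exactly ``simple object of the abelian category $\mods_v^{ss}A$'' and that this category is a length category, so that its $K_0$ really is free on its simple objects; and that Jasso's equivalence is an equivalence of abelian categories, so that it matches up simple objects --- equivalently, $v$-stable modules --- on the two sides, and hence identifies the ranks of the Grothendieck groups. Neither point is deep, but both need to be stated explicitly; everything else is a direct assembly of the already-established theorems.
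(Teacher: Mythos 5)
Your proposal is correct and follows exactly the route the paper takes: the paper's proof is literally the one-line statement that the result is a combination of \cref{thm:jassotauperp} and \cref{thm:ssmodsequiv}, and your argument is a careful elaboration of that combination (identifying $v$-stables with the simples of the wide subcategory $\mods_v^{ss}A$ via Rudakov's filtration, then transporting the count through Jasso's equivalence to $\mods B_{(T,P)}$). The extra bookkeeping you flag --- that $\mods_v^{ss}A$ is a length category whose simples are the $v$-stables, and that the equivalence matches simples --- is exactly the detail the paper leaves implicit, so nothing is missing and nothing diverges.
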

\begin{proof}
This is a combination of \cref{thm:jassotauperp} and \cref{thm:ssmodsequiv}.
\end{proof}

The following result establishes a bijection between chambers and $\tau$-tilting pairs and was first shown by Brüstle, Smith and Treffinger \cite{BST2019} who proved that every $\tau$-tilting pair gives rise to a unique chamber. Later, Asai \cite{Asai2019} showed the converse.

\begin{thm} \cite[Proposition 3.15]{BST2019}\cite[Theorem 3.17]{Asai2019}
\label{thm:ttilt-chamber-bij}
    Let $(T,P)$ be a $\tau$-tilting pair. Then $\Ccal_{(T,P)}^o$ is a chamber, that is, a connected open component of $\Rfrak \coloneqq \R^n \setminus \overline{\bigcup_{\substack{M \in \mods A\\0 \neq M}} \Dcal(M)} $. Moreover every chamber arises this way.
\end{thm}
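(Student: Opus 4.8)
The plan is to prove the two directions separately. For the first direction, assume $(T,P)$ is a $\tau$-tilting pair and show $\Ccal_{(T,P)}^o$ is a chamber. By \cref{thm:numbervssmods} (applied to the $\tau$-tilting pair $(T,P)$), there are no nonzero $v$-stable modules for any $v\in\Ccal_{(T,P)}^o$, and by \cref{thm:ssmodsequiv} the category $\mods_v^{ss}A = T^\perp\cap{}^\perp\tau T\cap P^\perp$ is constant on $\Ccal_{(T,P)}^o$; combined with Rudakov's Jordan--Hölder filtration this forces $\mods_v^{ss}A = 0$, hence every $v\in\Ccal_{(T,P)}^o$ avoids $\bigcup_{0\neq M}\Dcal(M)$, so $\Ccal_{(T,P)}^o\subseteq\Rfrak$. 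Since $\Ccal_{(T,P)}^o$ is the interior of a full-dimensional cone by \cref{thm:gvecbasis} (the $g$-vectors $\{g^{T_i},-g^{P_j}\}$ form a basis of $\Z^n$, so the cone is simplicial of dimension $n$), it is open and connected. It remains to see that $\Ccal_{(T,P)}^o$ is a maximal connected subset of $\Rfrak$, i.e.\ a full connected component: this follows because any $v$ on the topological boundary $\partial\Ccal_{(T,P)}^o$ lies on a facet spanned by $n-1$ of the basis vectors, hence by \cref{lem:coneintersect} lies in $\Ccal_{(T',P')}$ for a $\tau$-rigid but non-$\tau$-tilting pair $(T',P')$, and for such $v$ \cref{thm:numbervssmods} gives a nonzero $v$-stable module $B$, so $v\in\Dcal(B)\subseteq\overline{\bigcup\Dcal(M)}$; thus $\Ccal_{(T,P)}^o$ is both open and closed in $\Rfrak$, hence a connected component.

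For the converse, let $\mathfrak{c}$ be an arbitrary chamber. The key input is that the cones $\Ccal_{(T',P')}$ over \emph{all} $\tau$-rigid pairs form a polyhedral fan (\cref{lem:coneintersect}), and that the union of the interior cones $\Ccal_{(T,P)}^o$ over $\tau$-\emph{tilting} pairs is precisely the complement of the union of the lower-dimensional cones — because every maximal cone of this fan is of the form $\Ccal_{(T,P)}$ for a $\tau$-tilting pair (a $\tau$-rigid pair with $|T|+|P|<n$ spans a cone of dimension $<n$ by \cref{thm:gvecbasis}, so is not maximal, while adding summands strictly enlarges the cone by \cref{thm:isogvecs} until one reaches a $\tau$-tilting pair). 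I would then argue that every wall $\Dcal(B)$ with $B$ a brick that is $v$-stable for some $v$ is covered by the boundaries of these maximal cones: indeed if $v\in\Dcal(B)$ then $v$ lies in some cone $\Ccal_{(T',P')}^o$ of the fan, and by \cref{thm:numbervssmods} the existence of a nonzero $v$-stable module forces $(T',P')$ to be non-$\tau$-tilting, so $v$ is not in the interior of any maximal cone. Hence $\Rfrak = \bigcup_{(T,P)\ \tau\text{-tilt}}\Ccal_{(T,P)}^o$, and since each $\Ccal_{(T,P)}^o$ is open, connected, and (by the first part) a connected component, the chamber $\mathfrak{c}$ must coincide with exactly one of them.

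The main obstacle I anticipate is the converse direction, specifically establishing that the walls of the wall-and-chamber structure are \emph{exactly} covered by the codimension-$\geq 1$ skeleton of the $g$-vector fan — i.e.\ that there is no wall $\Dcal(B)$ passing through the interior of a $\tau$-tilting cone, and conversely no facet of a $\tau$-tilting cone that fails to be a wall. The first is handled by \cref{thm:numbervssmods} as above (a $v$-stable module would be a nonzero $v$-stable object, contradicting $|A|-|T|-|P|=0$). The second — that each codimension-one face genuinely is contained in $\overline{\bigcup\Dcal(M)}$ — again uses \cref{thm:numbervssmods}: a facet lies in $\Ccal_{(T',P')}^o$ for an almost-complete $\tau$-rigid pair with $|A|-|T'|-|P'|=1$, yielding exactly one nonzero $v$-stable module $B$, and then $\Dcal(B)$ contains that facet. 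One should also take care that $\Dcal(B)$ for such $B$ actually has codimension one (so the faces are genuine walls, not just contained in the closure of a union of higher-codimension stability spaces); this can be extracted from the fact that $B$ is $v$-stable on a relatively open subset of an $(n-1)$-dimensional affine subspace. I would also need the standard fact that the fan is locally finite / the union $\overline{\bigcup\Dcal(M)}$ equals the union of the non-maximal cones of the fan, which may require a brief argument that only finitely many walls meet any compact region, or an appeal to the structure theory already cited.
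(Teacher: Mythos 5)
Your first direction is correct and is essentially the argument the paper gives: use \cref{thm:numbervssmods} to see that $\mods_v^{ss}A=\{0\}$ for $v$ in the open cone, so the (open, connected) cone $\Ccal_{(T,P)}^o$ lies in $\Rfrak$ and sits inside one chamber, and then observe that every point of $\Ccal_{(T,P)}\setminus\Ccal_{(T,P)}^o$ lies in $\Ccal_{(T',P')}^o$ for a proper (non-$\tau$-tilting) direct summand pair, hence by \cref{thm:numbervssmods} in the stability space of some nonzero module; your ``clopen in $\Rfrak$'' phrasing is in fact a slightly more careful version of the paper's connectivity argument.

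The converse, however, contains a genuine gap. The paper does \emph{not} prove it: it cites Asai's theorem, and your sketch implicitly assumes precisely the hard content of that theorem. Concretely, your step ``if $v\in\Dcal(B)$ then $v$ lies in some cone $\Ccal_{(T',P')}^o$ of the fan'' presupposes that the cones of $\tau$-rigid pairs cover all of $\R^n$, and likewise your claim that $\Rfrak=\bigcup_{(T,P)\ \tau\text{-tilting}}\Ccal_{(T,P)}^o$ is exactly the statement to be proved. The $g$-vector fan is complete only for $\tau$-tilting finite algebras; in general (already for wild algebras, and in a milder form for the Kronecker quiver, where the fan misses a ray) its support is a proper subset of $\R^n$, and the essential difficulty is to show that no chamber can live outside the support of the fan, i.e.\ that every point not in any $\Ccal_{(T',P')}^o$ still lies in $\overline{\bigcup_{M\neq 0}\Dcal(M)}$. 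Nothing in \cref{thm:numbervssmods}, \cref{thm:ssmodsequiv}, \cref{thm:gvecbasis}, \cref{thm:isogvecs} or \cref{lem:coneintersect} gives this; it requires the analysis of Asai's paper (the reference cited for the ``moreover'' part), so as written your argument for the converse is circular rather than a proof. The points you flag as ``anticipated obstacles'' (codimension of $\Dcal(B)$, local finiteness of walls) are secondary to this; the completeness issue is the one that cannot be repaired with the tools quoted in these notes.
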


\begin{proof} Take $v_1 \in \Ccal_{(T,P)}^o$. Since $(T,P)$ is a $\tau$-tilting pair, we know $\mods_{v_1}^{ss} A = \{0\}$ from \cref{thm:numbervssmods}. In other words, $v_1$ cannot be in the stability space of any module, thus $v_1$ belongs to a chamber $\Cfrak$. Moreover, every vector of $\Ccal_{(T,P)}^o$ belongs to the same chamber, since $\Ccal_{(T,P)}^o$ is connected. Hence $\Ccal_{(T,P)}^o \subset \Cfrak$.\\

On the other hand, take a vector $v_2$ in the boundary i.e.
\[v_2 \in \Ccal_{(T,P)} \setminus \Ccal_{(T,P)}^o = \left\{ \sum_{i=1}^k \beta_i g^{T_i} -  \sum_{j=k+1}^t \beta_j g^{P_j}: \beta_i 
= 0  \text{ for some } 1 \leq i \leq t \right\}. \]
By definition there exists at least one index $i \in \{1,\dots,n\}$ such that $\beta_i = 0$. Therefore there is smaller-dimensional cone corresponding to a $\tau$-rigid pair $(T',P')$. From \cref{thm:numbervssmods} it follows that
\[ \mods_{v_2}^{ss}A = (T')^\perp \cap {^\perp \tau T'} \cap (P')^\perp \neq \{ 0 \}. \]
In other words, $v_2$ is in the stability space of some module, and thus is in a wall, hence no vectors outside of $\Ccal_{(T,P)}^o$ are in the chamber since it is connected and therefore $\Ccal_{(T,P)}^o = \Cfrak$. The proof of the converse can be found in \cite{Asai2019}.
\end{proof}

\underline{Remark.} Let $(T,P)$ be an almost $\tau$-tilting pair and let $(T',P')$ and $(T'',P'')$ be the two completions of $(T,P)$ into a $\tau$-tilting pair.
\begin{enumerate} 
    \item Then $\codim (\Ccal_{(T,P)}^o) = |A| - |T] -|P|=1$ i.e. the cone corresponding to the almost $\tau$-tilting pair is a wall.
    \item Like in \cref{lem:coneintersect} we have $\Ccal_{(T',P')} \cap \Ccal_{(T'',P'')} = \Ccal_{(T,P)}$. In other words the intersection of the chambers corresponding to the two completions is the wall separating them, given by the cone of the original $\tau$-rigid pair.
\end{enumerate}

The last remark illustrates that in the wall-and-chamber structure a mutation of a $\tau$-tilting pair corresponds to crossing a wall from the chamber corresponding to the pair to the chamber corresponding to the mutation of it. \\

In other words, \cref{thm:numbervssmods} says that each wall in the wall-and-chamber structure corresponds to a unique $v$-stable module where $v$ is in the wall. So if mutation corresponds to crossing a wall we can associate to any mutation of a $\tau$-tilting pair, the brick $B \in \mods_v^{ss} A$ whose stability space is the wall we cross. This is called the ``brick labelling for functorially finite torsion classes'' which we demonstrate below.

\begin{exmp}
    Consider the wall-and-chamber structure of the path algebra $A$ over the quiver $Q= \begin{tikzcd} 1 \arrow[r] & 2 \end{tikzcd}$ calculated in \cref{exmp:A2wac} and given by
        
    \[
    \begin{tikzpicture}
      \draw[-] (-2, 0) -- (2, 0) node[right] {$\Dcal({\tiny \begin{array}{c} 2 \end{array}})$};
      \draw[-] (0, -2) -- (0, 2) node[above] {$\Dcal({\tiny \begin{array}{c} 1 \end{array}})$};
      \draw[-] (0,0) -- (1.5,-1.5) node[right] {$\Dcal(\tiny{\begin{array}{cc} 1 \\2 \end{array}}\normalsize)$};
    \end{tikzpicture}
    \]
    We have found all the $\tau$-tilting pairs in \cref{exmp:A2Hasse}. In \cref{exmp:A2gvecs} we have found the following g-vectors for the indecomposable modules of $A$
    \[  g^2 =(0,1), \qquad g^{\tiny{\begin{array}{cc} 1 \\2 \end{array}}} = (1,0), \qquad g^1 = (1,-1).\]
    A quick calculation gives us the 5 chambers
    \[ \Ccal_{({\tiny \begin{array}{c} 2 \end{array}} \oplus {\tiny{\begin{array}{cc} 1 \\2 \end{array}}}, {\tiny\begin{array}{cc} 0 \end{array}})}^o = \left\{ \begin{pmatrix} \alpha_1 \\ \alpha_2 \end{pmatrix} : \alpha_i > 0 \right\}, \quad  \Ccal_{({\tiny \begin{array}{c} 1 \end{array}} \oplus {\tiny{\begin{array}{cc} 1 \\2 \end{array}}}, {\tiny\begin{array}{cc} 0 \end{array}})}^o = \left\{ \begin{pmatrix} \alpha_1 + \alpha_2\\ -\alpha_1 \end{pmatrix} : \alpha_i > 0 \right\},\]
    \[ \Ccal_{({\tiny \begin{array}{c} 2 \end{array}}, {\tiny{\begin{array}{cc} 1 \\2 \end{array}}})}^o = \left\{ \begin{pmatrix} -\alpha_2 \\ \alpha_1 \end{pmatrix} : \alpha_i > 0 \right\}, \quad  \Ccal_{({\tiny \begin{array}{c} 1 \end{array}},{\tiny \begin{array}{c} 2 \end{array}})}^o = \left\{ \begin{pmatrix} -\alpha_2 \\ -\alpha_1 \end{pmatrix} : \alpha_i > 0 \right\},\]
    \[ \Ccal_{({\tiny\begin{array}{cc} 0 \end{array}} ,{\tiny \begin{array}{c} 2 \end{array}} \oplus {\tiny{\begin{array}{cc} 1 \\2 \end{array}}})}^o = \left\{ \begin{pmatrix} \alpha_1 \\ -\alpha_1- \alpha_2 \end{pmatrix} : \alpha_i > 0 \right\}\]
    which correspond to the following picture
    \[ \begin{tikzpicture}
      \draw[-] (-3, 0) -- (3, 0) node[right] {$\Dcal({\tiny \begin{array}{c} 2 \end{array}})$};
      \draw[-] (0, -3) -- (0, 3) node[above] {$\Dcal({\tiny \begin{array}{c} 1 \end{array}})$};
      \draw[-] (0,0) -- (2.5,-2.5) node[right] {$\Dcal(\tiny{\begin{array}{cc} {\tiny \begin{array}{c} 1 \end{array}} \\ {\tiny \begin{array}{c} 2 \end{array}} \end{array}}\normalsize)$};
      \filldraw[black] (1.6,1) circle (0pt) node[above]{$\Ccal_{({\tiny \begin{array}{c} 2 \end{array}} \oplus {\tiny{\begin{array}{cc} 1 \\2 \end{array}}}, {\tiny\begin{array}{cc} 0 \end{array}})}$};
      \filldraw[black] (-1.6,1) circle (0pt) node[above]{$\Ccal_{({\tiny \begin{array}{c} 2 \end{array}} , {\tiny{\begin{array}{cc} 1 \\2 \end{array}}})}$};
      \filldraw[black] (-1.6,-1) circle (0pt) node[below]{$\Ccal_{({\tiny\begin{array}{cc} 0 \end{array}} ,{\tiny \begin{array}{c} 2 \end{array}} \oplus {\tiny{\begin{array}{cc} 1 \\2 \end{array}}})}$};
      \filldraw[black] (0.5,-2.5) circle (0pt) node[right]{$\Ccal_{({\tiny \begin{array}{c} 1 \end{array}},{\tiny \begin{array}{c} 2 \end{array}})}$};
      \filldraw[black] (2.5,-0.5) circle (0pt) node[below]{$\Ccal_{({\tiny \begin{array}{c} 1 \end{array}} \oplus {\tiny{\begin{array}{cc} 1 \\2 \end{array}}}, {\tiny\begin{array}{cc} 0 \end{array}})}$};
    \end{tikzpicture} \]
    This way we obtain the a labelling for arrows in the following mutation diagram such that each arrow indicates the wall we cross when mutating.
    \[
    \begin{tikzcd}[ampersand replacement=\&,column sep = 1.5em, row sep=2em]
        ({\tiny \begin{array}{c} 2 \end{array}} \oplus {\tiny{\begin{array}{cc} 1 \\2 \end{array}}}, {\tiny\begin{array}{cc} 0 \end{array}}) \arrow[r,"\tiny{\begin{array}{cc} 2 \end{array}}"] \arrow[rd, bend right, "\tiny{\begin{array}{cc} 1 \end{array}}"] \& ({\tiny \begin{array}{c} 1 \end{array}} \oplus {\tiny{\begin{array}{cc} 1 \\2 \end{array}}}, {\tiny\begin{array}{cc} 0 \end{array}}) \arrow[r,"{\tiny{\begin{array}{cc} 1 \\2 \end{array}}}"] \& ({\tiny \begin{array}{c} 1 \end{array}},{\tiny \begin{array}{c} 2 \end{array}}) \arrow[d,"\tiny{\begin{array}{cc} 1 \end{array}}"] \\
        \& ({\tiny \begin{array}{c} 2 \end{array}}, {\tiny{\begin{array}{cc} 1 \\2 \end{array}}} )\arrow[r,"\tiny{\begin{array}{cc} 2 \end{array}}"] \& ( {\tiny\begin{array}{cc} 0 \end{array}} ,{\tiny{\begin{array}{cc} 1 \\2 \end{array}}} \oplus {\tiny \begin{array}{c} 2 \end{array}})
    \end{tikzcd}
    \]
\end{exmp}

\subsection{$c$-vectors}

In this subsection, to avoid technicalities, we assume that $K$ is algebraically closed. The notion of $c$-vectors was introduced by Fu \cite{Fu2017} and was motivated by the \textit{tropical duality} of cluster algebras. Let us begin by giving the following constructive proof of \cref{thm:gvecbasis} by Treffinger \cite{Treffinger2019}.

\begin{thm}\cite[Theorem 3.5]{Treffinger2019}\label{thm:cvecsigncoh} Let $(T,P)$ be a $\tau$-tilting pair such that $T \cong \bigoplus_{i=1}^k T_i$ and $P \cong \bigoplus_{j=k+1}^t P_j$, then the set of $g$-vectors $\{ g^{T_1}, \dots, g^{T_k}, -g^{P_{k+1}}, \dots , -g^{P_n}\}$ is a basis of $\Z^n$.
\end{thm}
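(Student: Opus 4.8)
The plan is to prove that the $n\times n$ integer matrix $G$ whose columns are $g^{T_1},\dots,g^{T_k},-g^{P_{k+1}},\dots,-g^{P_n}$ lies in $\mathrm{GL}_n(\Z)$; note there are exactly $n$ columns since $|T|+|P|=n$. First, these vectors are $\R$-linearly independent: by \cref{thm:ttilt-chamber-bij} the cone $\Ccal_{(T,P)}$ is the closure of a chamber, hence full-dimensional in $\R^n$, yet it is generated by exactly these $n$ vectors, so they form an $\R$-basis. Thus it remains only to show $\det G=\pm1$.

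For this I would construct a family dual to the generators $g_1,\dots,g_n$ of $\Ccal_{(T,P)}$, using the walls of the chamber. Fix $i$ and let $(X^{(i)},Q^{(i)})$ be the almost complete $\tau$-rigid pair obtained by deleting the $i$-th indecomposable summand of $(T,P)$. By \cref{lem:coneintersect} the facet of $\Ccal_{(T,P)}$ spanned by $\{g_j:j\neq i\}$ equals $\Ccal_{(X^{(i)},Q^{(i)})}$; and by \cref{thm:ssmodsequiv} and \cref{thm:numbervssmods}, for $v$ in the relative interior of this facet the wide subcategory $\mods_v^{ss}A=(X^{(i)})^\perp\cap{}^\perp\tau X^{(i)}\cap(Q^{(i)})^\perp$ has Grothendieck group of rank $1$, hence a unique simple object $B_i$. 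Being simple in an abelian category, $B_i$ is a brick; and since $B_i\in\mods_v^{ss}A$ it is $v$-semistable, i.e. $v\in\Dcal(B_i)$. As $\Dcal(B_i)$ is closed, this yields $\Ccal_{(X^{(i)},Q^{(i)})}\subseteq\Dcal(B_i)$, that is, $\langle g_j,\bdim B_i\rangle=0$ for every $j\neq i$.

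Now let $C$ be the matrix with columns $\bdim B_1,\dots,\bdim B_n$. The previous step shows $C^{T}D_A G$ is the diagonal matrix $\mathrm{diag}(d_1,\dots,d_n)$ with $d_i=\langle g_i,\bdim B_i\rangle$. The crucial remaining point is that $d_i=\pm1$. By \cref{thm:gvectorsplit}, $d_i$ equals $\pm\big(\dim_K\Hom_A(X_i,B_i)-\dim_K\Hom_A(B_i,\tau X_i)\big)$, where $X_i$ is the $i$-th indecomposable summand of $(T,P)$; and by \cref{thm:jassotauperp} the category $(X^{(i)})^\perp\cap{}^\perp\tau X^{(i)}\cap(Q^{(i)})^\perp$ is $\mods$ of the $\tau$-tilting reduction of $A$ by $(X^{(i)},Q^{(i)})$, an algebra of rank one inside which $B_i$ is the unique brick. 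Transporting the computation of $d_i$ to this rank-one setting — where the relevant $g$-vector lives in $\Z^1$, $\bdim B_i=1$, and $K$ being algebraically closed makes endomorphism rings of bricks one-dimensional — forces $|d_i|=1$. Granting this, take determinants: $\det C\cdot\det D_A\cdot\det G=\prod_i d_i=\pm1$, and since all three factors are integers, each is $\pm1$; in particular $\det G=\pm1$, so the columns of $G$ form a $\Z$-basis. (Since $K$ is algebraically closed one actually has $D_A=I$, so already $C^{T}G=\mathrm{diag}(d_i)$.)

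The genuine obstacle is the claim $|d_i|=1$ — that the brick labelling the $i$-th wall of the chamber pairs to $\pm1$ with the $i$-th $g$-vector; the rest is bookkeeping inside the fan already built. One could instead argue by induction along $\tau$-tilting mutation, starting from $(\bigoplus_i P(i),0)$, whose $g$-matrix is the identity because $g^{P(i)}=\boldsymbol{e}_i$: one mutation replaces a generator $g_i$ by $-g_i$ plus a non-negative combination of the remaining $g_j$ (read off from the exchange sequence underlying \cref{theorem:twocompletions}), so the transition matrix is elementary of determinant $-1$ and $\det G$ stays $\pm1$. But this only reaches the $\tau$-tilting pairs lying in the component of the mutation quiver containing the projectives, and that quiver need not be connected, so I would keep the dual-basis argument as the main line and treat mutation only as a consistency check.
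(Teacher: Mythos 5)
Your overall strategy coincides with the paper's: form the matrix $G$ of $g$-vectors, pair it against the matrix of dimension vectors of the bricks $B_i$ living on the facets of $\Ccal_{(T,P)}$, observe that the product is diagonal, and conclude $\det G=\pm1$ from the diagonal entries being $\pm1$. Your derivation of the diagonality is fine (and is essentially how one proves it): $B_i$ is the unique simple of the rank-one wide subcategory attached to the almost complete pair by \cref{thm:ssmodsequiv}, \cref{thm:numbervssmods} and \cref{lem:coneintersect}, and closedness of $\Dcal(B_i)$ gives $\langle g_j,\bdim B_i\rangle=0$ for $j\neq i$.

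The genuine gap is exactly the step you flag yourself: $|d_i|=|\langle g_i,\bdim B_i\rangle|=1$. The paper does not prove this either — it quotes it as \cite[Lemma 3.3]{Treffinger2019} — but your substitute argument does not work as stated. \cref{thm:jassotauperp} gives an equivalence of the $\tau$-perpendicular category with $\mods B_{(X^{(i)},Q^{(i)})}$; it says nothing about how the pairing $\langle g^{X_i},-\rangle$ of \cref{thm:gvectorsplit} interacts with that equivalence. The number $d_i$ is $\pm\bigl(\dim_K\Hom_A(X_i,B_i)-\dim_K\Hom_A(B_i,\tau X_i)\bigr)$, computed entirely in $\mods A$, and $X_i$ is not even an object of the perpendicular category, so ``the relevant $g$-vector lives in $\Z^1$ and $\bdim B_i=1$'' has no meaning without a nontrivial compatibility statement for $g$-vectors under $\tau$-tilting reduction, which is neither in the paper nor supplied by you. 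What is actually needed is a direct computation of the two Hom-spaces: e.g.\ for a module summand $X_i=T_i$ one shows $\Hom_A(B_i,\tau T_i)=0$ (since $B_i\in\Fac T\subseteq{}^\perp\tau T$) and $\dim_K\Hom_A(T_i,B_i)=\dim_K\End_A(B_i)=1$, the last equality using algebraic closedness; the dual computation is needed when the deleted summand is projective. This is the substantive content of Treffinger's Lemma 3.3, and without it (or a citation of it) your argument only yields that $C^TD_AG$ is an invertible integer diagonal matrix, i.e.\ an $\R$-basis, not a $\Z$-basis. Your fallback mutation-induction is, as you note, also incomplete, since it only reaches the component of the exchange graph containing $(A,0)$.
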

\begin{proof} Let $G_{(T,P)} \coloneqq (g^{T_1} |\dots |g^{T_k}|-g^{P_{k+1}}| \dots | -g^{P_n})$ be the $n \times n$ matrix whose columns are the $g$-vectors of the indecomposable direct summands of $(T,P)$. Set $(T,P)_\ell = \left( \bigoplus_{i \neq \ell} T_i, \bigoplus_{i \neq \ell} P_i\right)$ for $1 \leq \ell \leq n$. From \cref{thm:ssmodsequiv} it follows that for all $\ell \in \{ 1, \dots, n \}$ there exists a unique brick $B_\ell$ which is $v$-semistable for all $v \in \Ccal_{(T,P)_\ell}$ given by
\[ B_\ell \in (\bigoplus_{i \neq \ell} T_i) ^\perp \cap {^\perp( \bigoplus_{i \neq \ell} \tau T_i )} \cap (\bigoplus_{i \neq l} P_i)^\perp.\]

Treffinger showed in \cite[Lemma 3.3]{Treffinger2019} that the collection of these bricks $B_\ell$ is such that
\[ G_{(T,P)}^T \cdot (\bdim B_1 | \bdim B_2 | \dots |\bdim B_n ) = \begin{pmatrix} \delta_1 & 0 & \dots &0 \\ 0 & \delta_2 & & \vdots \\ \vdots & & \ddots & 0 \\ 0 & \dots & 0 & \delta_n \end{pmatrix}\]
where $\delta_i \in \{-1,1\}$ for all $1 \leq i \leq n$. In other words, the product of the $g$-matrix and the matrix of bricks is an invertible diagonal matrix in $\text{Mat}_n(\Z)$. Thus the $g$-vectors form a basis of $\Z^n$.
\end{proof}

\underline{Remark.} The only place in the previous proof where the hypothesis of $K$ being algebraically closed is used, is to show that $\delta_i \in \{-1,1\}$. However, the fact that the multiplication of these two matrices is diagonal with nonzero determinant is true for every finite dimensional algebra. See for instance \cite[Theorem 1.3]{ST2020}. \\

We may now define the $c$-vectors explicitly in the following way.

\begin{defn} \cite{Fu2017}
    Let $(T,P)$ be a $\tau$-tilting pair. We define the $C$\textit{-matrix} of $(T,P)$ to be
    \[ C_{(T,P)} \coloneqq (G_{(T,P)}^{T})^{-1}. \]
    We call the columns of $C_{(T,P)}$ the $c$\textit{-vectors} of $A$. In particular we call column $i$ the $i$-th $c$-vector and denote it by $\bc_i$. The construction by Treffinger \cite{Treffinger2019} above shows that every $c$-vector is of the form $c= \pm \bdim B$ for some brick $B$, which means the $c$-vectors are sign-coherent.
\end{defn}

Let us now define a class of algebras for which we can obtain all $\tau$-tilting pairs by mutation from the $\tau$-tilting pair $(A,0)$.

\begin{defn}
    An algebra is said to be \textit{$\tau$-tilting finite} (also called \text{$g$-finite}) if the number of $\tau$-tilting pairs is finite.
\end{defn}

There are many equivalent conditions for an algebra to be $\tau$-tilting finite, let us state the following two.

\begin{thm} \cite[Theorem 1.4]{DIJ2019}
    An algebra is $\tau$-tilting finite if and only if the number of bricks in $\mods A$ is finite.
\end{thm}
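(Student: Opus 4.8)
\emph{Proof idea.} The plan is to prove both implications through the wall-and-chamber structure, using the bijection between $\tau$-tilting pairs and chambers from \cref{thm:ttilt-chamber-bij}. So it suffices to show that $\mods A$ has finitely many bricks if and only if the wall-and-chamber structure of $A$ has finitely many chambers.

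\textbf{Finitely many bricks $\Rightarrow$ $\tau$-tilting finite.} Let $B_1,\dots,B_m$ be the bricks of $\mods A$ up to isomorphism and set $H_i\coloneqq\{v\in\R^n:\langle v,\bdim B_i\rangle=0\}$; since each $B_i$ is nonzero and $D_A$ is invertible, $H_i$ is a hyperplane. The first step is to show $\bigcup_{0\neq M}\Dcal(M)\subseteq\bigcup_{i=1}^m H_i$: if $v\in\Dcal(M)$ with $M\neq 0$, then $M$ is a nonzero $v$-semistable module, so by Rudakov's filtration theorem it has a nonzero $v$-stable submodule $B$; by the proposition that $v$-stable modules are bricks, $B\cong B_i$ for some $i$, and $\langle v,\bdim B\rangle=0$ gives $v\in H_i$. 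As $\bigcup_i H_i$ is closed, $\overline{\bigcup_{0\neq M}\Dcal(M)}\subseteq\bigcup_{i=1}^m H_i$, hence $\R^n\setminus\bigcup_i H_i$ is contained in the chamber region $\R^n\setminus\overline{\bigcup_{0\neq M}\Dcal(M)}$. The complement of a finite hyperplane arrangement has finitely many connected components, each of which, being connected, lies in a single chamber; and each chamber, being open, is not contained in the nowhere-dense set $\bigcup_i H_i$, so it contains at least one such component. Since distinct chambers are disjoint, there are finitely many chambers, and therefore, by \cref{thm:ttilt-chamber-bij}, finitely many $\tau$-tilting pairs.

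\textbf{$\tau$-tilting finite $\Rightarrow$ finitely many bricks.} Now $A$ has finitely many $\tau$-tilting pairs; since every $\tau$-rigid pair is a direct summand of some $\tau$-tilting pair (by completion, \cite{AIR2014}) and a $\tau$-tilting pair has exactly $n$ indecomposable summands, $A$ has finitely many $\tau$-rigid pairs. I would then invoke two further facts: every brick $B$ is $v$-stable for some $v\in\R^n$, and in the $\tau$-tilting finite case the cones $\Ccal_{(T,P)}$ over $\tau$-rigid pairs cover $\R^n$ (completeness of the $g$-vector fan, \cite{DIJ2019}). Then such a $v$ lies in the interior cone $\Ccal_{(T,P)}^{o}$ of some $\tau$-rigid pair $(T,P)$, and by \cref{thm:numbervssmods} there are at most $|A|$ many $v$-stable modules there, all determined by $(T,P)$ via \cref{thm:ssmodsequiv}. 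Hence every brick occurs among the finitely many stable modules attached to the finitely many $\tau$-rigid pairs, so $\mods A$ has finitely many bricks.

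\textbf{Main obstacle.} The first implication is entirely formal given the earlier results; the difficulty is concentrated in the converse, where the inputs ``every brick is $v$-stable for some $v$'' and ``the $g$-vector fan is complete when $A$ is $\tau$-tilting finite'' are not formal consequences of the excerpt and would need to be proved or imported. An alternative for the converse that sidesteps fan-completeness uses $c$-vectors: finitely many $\tau$-tilting pairs yield finitely many $C$-matrices and hence finitely many $c$-vectors, and one shows that in the $\tau$-tilting finite case every brick arises, up to sign, as a $c$-vector, so \cref{thm:cvecsigncoh} together with the construction of \cite{Fu2017} bounds the number of bricks.
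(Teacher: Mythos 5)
Your first implication is sound relative to what the notes provide, and in fact the notes themselves give no proof of this theorem (it is only cited from \cite{DIJ2019}), so your argument has to stand on its own. For ``finitely many bricks $\Rightarrow$ $\tau$-tilting finite'' the chain is complete: Rudakov's filtration theorem produces a $v$-stable submodule of any nonzero $v$-semistable module, $v$-stable modules are bricks, so every $\Dcal(M)$ lies in one of the finitely many hyperplanes $\{v:\langle v,\bdim B_i\rangle=0\}$; the complement of a finite hyperplane arrangement has finitely many components, each chamber contains one of them, and \cref{thm:ttilt-chamber-bij} (with \cref{thm:isogvecs} for injectivity) turns ``finitely many chambers'' into ``finitely many $\tau$-tilting pairs''. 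This half I would accept as written.

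The converse, however, has a genuine gap, which you flag but do not close, and it sits exactly where the content of the theorem lies. The input ``every brick is $v$-stable for some $v$'' is not a general fact about finite-dimensional algebras: over the quiver with two vertices and three arrows $1\to 2$, the representation $M=(K^2,K^2;\,\id,\,E_{12},\,E_{22})$ (with $E_{12},E_{22}$ the matrix units) is a brick, since an endomorphism must commute with the identity, with $E_{12}$ and with $E_{22}$, forcing it to be scalar; yet it contains a subrepresentation of dimension vector $(1,1)=\tfrac{1}{2}\bdim M$, so no $v$ can satisfy both $\langle v,\bdim M\rangle=0$ and $\langle v,(1,1)\rangle<0$, i.e.\ $M$ is $v$-stable for no $v$. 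So the claim you need can only be true \emph{because} $A$ is $\tau$-tilting finite, and establishing it under that hypothesis is essentially the brick-labelling theory of Demonet--Iyama--Jasso and Asai --- that is, the hard half of the very statement being proved. The same applies to your other imports: completeness of the $g$-vector fan for $\tau$-tilting finite algebras, and ``every brick is $\pm$ a $c$-vector when $A$ is $\tau$-tilting finite''; neither follows from anything in the notes, and each is of comparable depth to the target. The cited proof avoids stability altogether: for a brick $B$ the smallest torsion class $\Filt(\Fac B)$ is functorially finite when $A$ is $\tau$-tilting finite (the quoted \cite[Theorem 3.8]{DIJ2019}), and one then bounds the bricks by attaching each of them to one of the finitely many functorially finite torsion classes, with boundedly many bricks per class; some argument of this kind (or a genuine proof of your two imported facts) is what is missing from your converse direction.
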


\begin{thm} \cite[Theorem 1.1]{ST2020}
    An algebra is $\tau$-tilting finite if and only if there exists a number $d \in \N$ such that $\dim_K B \leq d$ for every brick $B \in \mods A$.
\end{thm}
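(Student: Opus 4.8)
The plan is to establish the two directions separately. The implication that $\tau$-tilting finiteness forces a uniform bound on brick dimensions is immediate from the previous theorem: by \cite[Theorem 1.4]{DIJ2019} a $\tau$-tilting finite algebra has only finitely many bricks up to isomorphism, so one may simply take $d$ to be the maximum of their $K$-dimensions. All the content lies in the converse, and the plan there is to use the $g$/$c$-vector duality of the preceding subsection: a uniform bound on $\dim_K B$ should collapse the set of all $c$-vectors of $A$ to a \emph{finite} set, which in turn bounds the number of $\tau$-tilting pairs.

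Here is how I would carry that out. Assume $\dim_K B \leq d$ for every brick $B \in \mods A$, and let $(T,P)$ range over the $\tau$-tilting pairs. Every $c$-vector $\bc$ of $A$ is a column of some $C_{(T,P)}$, and by the construction underlying \cref{thm:cvecsigncoh} there is a brick $B$ and a nonzero integer $\delta$ with $\bdim B = \delta\,\bc$ (with $\delta = \pm 1$ when $K$ is algebraically closed, so that $\bc = \pm\bdim B$). Since the entries of $\bdim B$ are non-negative integers and $\dim_K S(i) \geq 1$, we get $\|\bc\|_1 \leq \|\bdim B\|_1 = \sum_i (\bdim B)_i \leq \sum_i (\bdim B)_i \dim_K S(i) = \dim_K B \leq d$. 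As there are only finitely many integer vectors of $\ell^1$-norm at most $d$, the algebra $A$ has only finitely many $c$-vectors; hence there are only finitely many matrices $C_{(T,P)}$, each being an $n\times n$ matrix with columns drawn from this finite set. Finally, $C_{(T,P)}$ recovers $G_{(T,P)} = (C_{(T,P)}^{T})^{-1}$, whose columns sum to $g^{T}-g^{P}$, and by \cref{thm:isogvecs} this vector determines $(T,P)$; so $(T,P)\mapsto C_{(T,P)}$ is injective and there are only finitely many $\tau$-tilting pairs, i.e.\ $A$ is $\tau$-tilting finite.

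The place where the real idea enters, and what I expect to be the main obstacle to phrase correctly, is the inequality $\|\bc\|_1 \leq \dim_K B$ --- that is, the fact that $\tau$-tilting pairs cannot proliferate without the associated bricks growing in dimension. A priori $\tau$-tilting infiniteness yields only infinitely many bricks (via \cite[Theorem 1.4]{DIJ2019}), and these could in principle all be small: over the Kronecker algebra there are already infinitely many bricks of dimension vector $(1,1)$. What rules this out in general is the integrality and invertibility of the matrices $C_{(T,P)}$ (\cref{thm:gvecbasis}) together with the matrix identity linking $g$-vectors and brick dimension vectors: there is no room for infinitely many such integer matrices unless their columns, hence the brick dimension vectors, escape every bounded region. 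A secondary technical point is that the $c$-vector formalism of this subsection is developed under the standing assumption that $K$ is algebraically closed; for a general finite-dimensional algebra one uses instead the general form of the identity $G_{(T,P)}^{T}(\bdim B_1 \mid \dots \mid \bdim B_n) = \mathrm{diag}(\delta_1,\dots,\delta_n)$ with nonzero integers $\delta_i$ recorded in the remark after \cref{thm:cvecsigncoh}, which is all that the argument above actually requires.
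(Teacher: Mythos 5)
The notes do not actually prove this theorem --- it is only quoted from \cite{ST2020} --- so there is no in-paper argument to compare with; judged on its own, your proof is correct and uses precisely the ingredients assembled in this subsection. The forward direction is indeed immediate from \cite[Theorem 1.4]{DIJ2019}. For the converse, your chain works: by the construction behind \cref{thm:cvecsigncoh} (and the remark following it) every column $\bc$ of a matrix $C_{(T,P)}$ satisfies $\bdim B = \delta\,\bc$ for a brick $B$ and a nonzero integer $\delta$, whence $\|\bc\|_1 \leq \|\bdim B\|_1 \leq \dim_K B \leq d$; the passage from boundedness to finiteness of the set of possible columns additionally needs these columns to be \emph{integer} vectors, which is exactly what \cref{thm:gvecbasis} provides ($G_{(T,P)} \in GL_n(\Z)$, so $C_{(T,P)} = (G_{(T,P)}^{T})^{-1}$ is an integer matrix). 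You do invoke this, but note that your closing claim that the identity $G_{(T,P)}^{T}(\bdim B_1 \mid \dots \mid \bdim B_n) = \mathrm{diag}(\delta_1,\dots,\delta_n)$ is ``all that the argument requires'' over a general field is slightly too quick on its own: that identity only makes the $c$-vectors bounded rational vectors, and integrality must still come from \cref{thm:gvecbasis}. The final injectivity step, recovering $G_{(T,P)}$ from $C_{(T,P)}$ and then $(T,P)$ from $g^{T}-g^{P}$ via \cref{thm:isogvecs}, is fine, so only finitely many $\tau$-tilting pairs can occur. This is essentially the argument of the cited source, which establishes the hard direction through the same $g$-vector/$c$-vector duality (there phrased contrapositively: $\tau$-tilting infiniteness forces bricks of unbounded dimension).
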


Let us conclude by giving some more insight into $c$-vectors and their associated bricks. Asai \cite{Asai2020} introduced semibricks, defined as follows.

\begin{defn}\cite[Definition 1.1]{Asai2020}
A set $\{ B_i: i \in I\}$ is a \textit{semibrick} if $B_i$ is a brick for each $i \in I$ and $\Hom_A(B_i, B_j) = 0$ for $i \neq j$.
\end{defn}

Treffinger \cite{Treffinger2019} showed that in fact the collection of positive (resp. negative) $c$-vectors form a semibrick. The following statement is in the setting of \cref{thm:cvecsigncoh}.

\begin{thm} \cite[Lemma 3.10]{Treffinger2019}
    Let $(T,P)$ be a $\tau$-tilting pair and let $\{ B_i: i= 1, \dots,n \}$ be the bricks associated to the almost $\tau$-rigid pairs $(T,P)_i$. Define
    \[ C_{(T,P)}^+ \coloneqq \{ B_i: c_i= \bdim B_i \}, \quad C_{(T,P)}^- \coloneqq \{ B_i: c_i = - \bdim B_i \}. \]
    Then $C_{(T,P)}^+$ and $C_{(T,P)}^-$ are semibricks. Moreover, $\Filt(\Fac(C_{(T,P)}^+)) = \Fac T$ and $\Filt(\textnormal{Sub}(C_{(T,P)}^-)) = T^\perp$. 
\end{thm}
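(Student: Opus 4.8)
## Proof proposal

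The plan is to assemble the statement from the pieces already developed rather than proving everything from scratch. First I would recall the setup: $(T,P)$ is a $\tau$-tilting pair, $(T,P)_i = \left( \bigoplus_{j \neq i} T_j, \bigoplus_{j \neq i} P_j \right)$ is the almost $\tau$-rigid pair obtained by deleting the $i$-th summand, and $B_i$ is the unique brick that is $v$-semistable for all $v$ in the wall $\Ccal_{(T,P)_i}$, as produced by \cref{thm:ssmodsequiv} together with \cref{thm:numbervssmods} (the rank of the relevant $\tau$-perpendicular category is $|A| - |T| - |P| = 1$ for an almost $\tau$-rigid pair, so there is exactly one such brick up to isomorphism). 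By \cref{thm:cvecsigncoh} and the discussion following the definition of the $C$-matrix, each $c$-vector satisfies $\bc_i = \pm \bdim B_i$, so the partition of $\{B_1, \dots, B_n\}$ into $C^+_{(T,P)}$ and $C^-_{(T,P)}$ is well-defined.

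The first substantive step is to show $C^+_{(T,P)}$ is a semibrick, i.e. $\Hom_A(B_i, B_j) = 0$ for $i \neq j$ with both $B_i, B_j \in C^+_{(T,P)}$. Here I would invoke the characterisation $\bc_i = \bdim B_i$ as meaning precisely that $B_i$ is the minimal ($\Ext$-projective) brick witnessing a cover relation in the Hasse quiver of torsion classes going \emph{up} from $\Fac T$; concretely the $\tau$-tilting pair obtained by the corresponding mutation is larger, i.e. $\Fac T \subsetneq \Fac T^{(i)}$. Each such $B_i$ then lies in $\Fac T^{(i)} \cap T^\perp$ — it is a brick labelling an arrow out of the vertex $\Fac T$ — and one shows using the factorisation of any $f \colon B_i \to B_j$ through $\image f$, together with the torsion/torsion-free orthogonality $\Hom_A(\Fac T, T^\perp) = 0$ and the fact (from \cref{thm:ssmodsequiv}) that these bricks are relatively simple in suitable wide subcategories, that no nonzero map between distinct $B_i, B_j$ can exist. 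The dual argument with $\Sub(\tau T \oplus \nu P)$ in place of $\Fac T$, and the torsion pair $({}^\perp \tau T \cap P^\perp, \Sub(\tau T \oplus \nu P))$, gives that $C^-_{(T,P)}$ is a semibrick.

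For the two filtration identities, the inclusion $\Filt(\Fac(C^+_{(T,P)})) \subseteq \Fac T$ is the easy direction: every $B_i \in C^+_{(T,P)}$ lies in $\Fac T$ (being $\Ext$-projective in a torsion class containing $\Fac T$ as a wide subcategory labelled by $B_i$, or more directly because its $g$-vector expansion has nonnegative coefficients on the $g^{T_\ell}$), and $\Fac T$ is a torsion class, hence closed under quotients and extensions, so it contains $\Filt(\Fac(C^+_{(T,P)}))$. For the reverse inclusion I would argue that $\Filt(\Fac(C^+_{(T,P)}))$ is itself a torsion class (this is the general fact, proved earlier in the excerpt for $\Filt(\Fac M)$, that such a subcategory is closed under quotients and extensions, extended to a finite collection), and then show it contains the $\Ext$-projective generator $P(\Fac T) = T$ by an induction on the Hasse-quiver distance: starting from the maximal pair $(A,0)$ with $\Fac A = \mods A$ one peels off bricks labelling downward cover relations, and at each step $\Fac$ of the smaller torsion class is generated by filtrations of its $\Ext$-projectives and the previous-level bricks, which are exactly the positive $c$-vectors of that pair. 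The identity $\Filt(\Sub(C^-_{(T,P)})) = T^\perp$ then follows by the dual argument applied to the torsion-free class $T^\perp = {}^\perp(\text{its torsion class})$, noting $T^\perp$ is the torsion-free class of the pair $(\Fac T, T^\perp)$.

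The main obstacle I expect is the reverse inclusion $\Fac T \subseteq \Filt(\Fac(C^+_{(T,P)}))$: it is not purely formal, because one must genuinely use the $\tau$-tilting-theoretic input that the indecomposable $\Ext$-projectives $T_1, \dots, T_k$ of $\Fac T$ can each be reached by a chain of wall-crossings from $A$, and that the bricks picked up along the way are positive $c$-vectors at every intermediate pair — reconciling the "local" $c$-vectors of intermediate pairs with the bricks $B_i$ attached to $(T,P)$ itself requires the $\tau$-tilting reduction machinery (\cref{thm:jassotauperp}) or a careful Hasse-quiver induction. I would lean on Treffinger's \cite[Lemma 3.10]{Treffinger2019} for the precise bookkeeping here rather than reproving it in full.
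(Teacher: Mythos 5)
There is a genuine gap, and it sits at the first substantive step: your identification of the sign of a $c$-vector with the direction of mutation is backwards. You read $\bc_i = \bdim B_i$ as saying that the mutation at the $i$-th summand goes \emph{up}, and conclude $B_i \in \Fac T^{(i)} \cap T^\perp$. This cannot be correct: it contradicts the very statement you are proving, since $B_i \in \Fac(C_{(T,P)}^+) \subseteq \Filt(\Fac(C_{(T,P)}^+)) = \Fac T$ while $\Fac T \cap T^\perp = \{0\}$; and it already fails for $(T,P)=(A,0)$, where $C_{(A,0)}$ is the identity matrix, so all $c$-vectors are positive, yet there are no upward cover relations above $\Fac A = \mods A$ (the bricks there are the simples, which lie in $\Fac A$). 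The correct picture, which the paper's proof uses, is that the bricks with positive $c$-vector lie in $\Fac T$ (this is \cite[Proposition 3.2]{Treffinger2019}) and those with negative $c$-vector lie in $\Sub(\tau T)$. Your semibrick argument inherits this error, and even after flipping the sign it is not yet an argument: Hom-orthogonality of the torsion pair together with ``relative simplicity in suitable wide subcategories'' does not give $\Hom_A(B_s,B_t)=0$, because $B_s$ and $B_t$ are relatively simple in \emph{different} wide subcategories $\mods^{ss}_v A$, for $v$ on different walls. The paper's argument is concrete: since $B_t \in \Fac T$ and $B_t \in \bigl(\bigoplus_{i \neq t} T_i\bigr)^\perp$, any epimorphism from $\add T$ onto $B_t$ must come from $T_t$ alone, giving an epimorphism $p_t \colon T_t \to B_t$; composing a nonzero $f \colon B_t \to B_s$ with $p_t$ yields a nonzero map $T_t \to B_s$, contradicting $B_s \in \bigl(\bigoplus_{i \neq s} T_i\bigr)^\perp$ since $t \neq s$. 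The dual argument, using a monomorphism $B_s \hookrightarrow \tau T_s$, handles $C_{(T,P)}^-$.

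Two further points. Your proposal is internally inconsistent: in the filtration paragraph you place the positive bricks in $\Fac T$, contradicting the semibrick paragraph where you placed them in $T^\perp$ (and the side justification ``nonnegative coefficients in the $g$-vector expansion'' is not a valid membership criterion for $\Fac T$). Moreover, your proposed induction for $\Fac T \subseteq \Filt(\Fac(C_{(T,P)}^+))$ along mutation chains starting at $(A,0)$ cannot work in general: as remarked in the text before \cref{exmp:as3tiltingpairs}, when $A$ is $\tau$-tilting infinite not every $\tau$-tilting pair is reachable from $(A,0)$ by mutation. For the ``moreover'' part the paper itself only cites \cite[Lemma 3.13]{Treffinger2019}, so deferring there, as you partially do, is acceptable; the part you were expected to prove is the semibrick statement, and that is where your approach breaks down.
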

\begin{proof}
 Let us begin by showing that $C_{(T,P)}^+$ is a semibrick. Take $B_s, B_t \in C_{(T,P)}^+$ such that $s \neq t$. By construction the bricks $B_s$ and $B_t$ are $v$-semistable for $v \in \Ccal_{(T,P)_s}^o$ and $v \in \Ccal_{(T,P)_t}^o$, respectively, and satisfy 
\[ B_s \in \left(\bigoplus_{i \neq s} T_i\right){}^\perp \cap {}^\perp \left(\bigoplus_{i \neq s} \tau T_i\right), \qquad  B_t \in \left(\bigoplus_{i \neq t} T_i\right){}^\perp \cap {}^\perp \left(\bigoplus_{i \neq t} \tau T_i\right). \]
Then \cite[Proposition 3.2]{Treffinger2019} implies that $B_s, B_t \in \Fac T$. Combining this with the above, it follows that there exists an epimorphism $p_t: T_t \to B_t$. Thus every morphism $f \in \Hom(B_t,B_s)$ can be composed with $p_t$ to get a nonzero map $f p_t: T_t \to B_s$. But since $B_s \in (\bigoplus_{i \neq s} M_i)^\perp$ and $s \neq t$ this is a contradiction. Therefore $f = 0$ and $\Hom(B_t, B_s) = 0$ for all $B_t, B_s \in C_{(T,P)}^+$. \\

A dual argument shows that elements $B_s, B_t \in C_{(T,P)}^-$ such that $s \neq t$ satisfy $B_s, B_t \in \Sub(\tau T)$. In particular there is a monomorphism $\iota_s: B_s \to \tau T_s$ Thus any morphism $f \in \Hom(B_t, B_s)$ can be composed with $\iota_s$ to obtain a map $\iota_s f: B_t \to \tau T_s$ which is a contradiction and $f = 0$. For the moreover part, see \cite[Lemma 3.13]{Treffinger2019}.
\end{proof}

\section{A detailed example}
We conclude these notes by illustrating the connection between wall-and-chamber structures and $\tau$-tilting theory on our running example. Consider the path algebra $A$ over $Q= \begin{tikzcd} 1 \arrow[r] & 2 \arrow[r] & 3 \arrow[ll, bend right] \end{tikzcd}$ modulo the square of the arrow ideal. In \cref{exmp:as3tiltingpairs} we found 14 $\tau$-tilting pair and in \cref{exmp:A3loop} we found 14 chambers. This agrees with the prediction of \cref{thm:ttilt-chamber-bij}, that there is a one-to-one correspondence between chambers and $\tau$-tilting pairs. In order to compute which chamber correspond to the cone of which $\tau$-tilting pair, we must first compute all the $g$-vectors. The simple modules have projective resolutions given by
    \[ {\tiny \begin{array}{cc} 2 \\ 3 \end{array}} \to {\tiny \begin{array}{cc} 1 \\2 \end{array}} \to {\tiny \begin{array}{c} 1 \end{array}} \quad \Longrightarrow \quad g^1 = (1,-1,0),\]
    \[ {\tiny \begin{array}{cc} 3 \\ 1 \end{array}} \to {\tiny \begin{array}{cc} 2 \\3 \end{array}} \to {\tiny \begin{array}{c} 2 \end{array}} \quad \Longrightarrow \quad g^2 = (0,1,-1),\]
    \[ {\tiny \begin{array}{cc} 1 \\ 2 \end{array}} \to {\tiny \begin{array}{cc} 3 \\1 \end{array}} \to {\tiny \begin{array}{c} 3 \end{array}}  \quad \Longrightarrow \quad g^3= (-1,0,1).\]
    Whereas the projective modules have trivial projective resolutions. Their $g$-vectors are the following:
    \[ g^{\tiny\begin{array}{cc} 1 \\2 \end{array}} = (1,0,0), \quad g^{\tiny\begin{array}{cc} 2 \\3 \end{array}} = (0,1,0), \quad g^{\tiny\begin{array}{cc} 3 \\1 \end{array}} = (0,0,1), \quad \]
    Since the cone associated to a $\tau$-tilting pair $(T,P)$ is just the space given by $\sum \alpha_i g^{T_i} - \sum \alpha_j g^{P_j}$, for $\alpha_i, \alpha_j > 0$ it follows that the cone $\Ccal_{(T,P)}$ corresponding to $(T,P)$ is the chamber whose ``vertices'' are the $g$-vectors $g^{T_i}$ and $-g^{P_i}$. We obtain \cref{fig:bigwacexmp} which we can see as a dual to the mutation graph of \cref{fig:A3mutation} in a way that mutation corresponds to crossing a wall. The corresponding torsion pairs to each chamber may be found in \cref{table:A3table}
    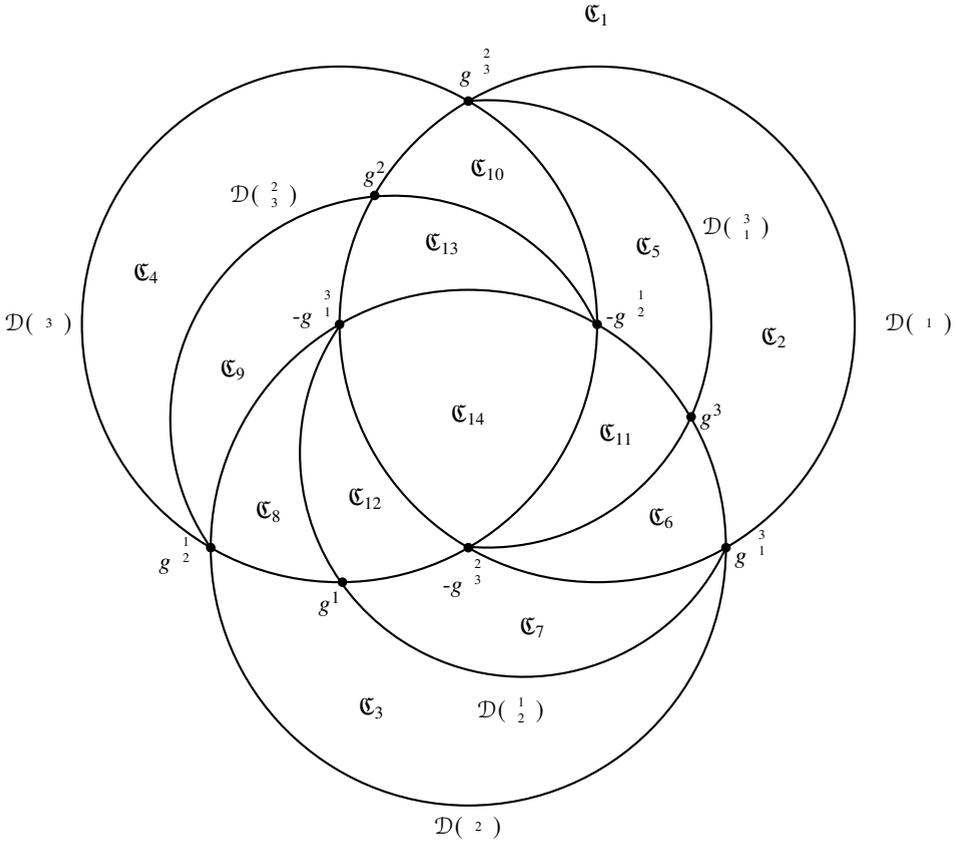
\begin{figure}[ht!]
    \centering
    \begin{tikzpicture}[thick,scale=0.85, every node/.style={transform shape}]
        \draw [black,thick] (-4,-3.4641) arc [
            start angle=-145,
            end angle=-335,
            x radius=3.47cm,
            y radius=3.47cm
        ];
        \draw [black,thick] (4,-3.4641) arc [
            start angle=-25,
            end angle=-215,
            x radius=3.47cm,
            y radius=3.47cm
        ];
        \draw [black,thick] (0,3.4641) arc [
            start angle=-265,
            end angle=-455,
            x radius=3.47cm,
            y radius =3.47cm
        ];
        \draw[color=black,thick] (0,-3.46410161514) circle [radius=4];
        \draw[color=black,thick] (2,0) circle [radius=4];
        \draw[color=black,thick] (-2,0) circle [radius=4];
        \filldraw[black] (-6,0) node[left]{$\Dcal({\tiny \begin{array}{c} 3 \end{array}})$};
        \filldraw[black] (0,-7.5) node[below]{$\Dcal({\tiny \begin{array}{c} 2 \end{array}})$};
        \filldraw[black] (7,0) node[black]{$\Dcal({\tiny \begin{array}{c} 1 \end{array}})$};
        \filldraw[black] (0,-6) node[right] {$\Dcal({\tiny \begin{array}{cc} 1 \\2 \end{array}})$};
        \filldraw[black] (-2.5,2) node[left] {$\Dcal({\tiny \begin{array}{cc} 2 \\3 \end{array}})$};
        \filldraw[black] (3.5,1.5) node[right] {$\Dcal({\tiny \begin{array}{cc} 3 \\1 \end{array}})$};
        \filldraw[black] (4,-3.4641) node[right] {$g^{\tiny \begin{array}{cc} 3 \\1 \end{array}}$};
        \node at (4,-3.4641)[circle,fill,inner sep=1.5pt]{};
        \filldraw[black] (0.2,3.55) node[above] {$g^{\tiny \begin{array}{cc} 2 \\ 3 \end{array}}$};
        \node at (0,3.4641)[circle,fill,inner sep=1.5pt]{};
        \filldraw[black] (-4,-3.5641) node[left] {$g^{\tiny \begin{array}{cc} 1 \\ 2 \end{array}}$};
        \node at (-4,-3.4641)[circle,fill,inner sep=1.5pt]{};
        \filldraw[black] (3.4598,-1.4347) node[right] {$g^3$};
        \node at (3.4598,-1.4347)[circle,fill,inner sep=1.5pt]{};
        \filldraw[black] (-2.1547,-4) node[below] {$g^1$};
        \node at (-1.9547,-4)[circle,fill,inner sep=1.5pt]{};
        \filldraw[black] (-1.4547,2) node[above] {$g^2$};
        \node at (-1.4547,2)[circle,fill,inner sep=1.5pt]{};
        \filldraw[black] (2,-0.2) node[above right] {-$g^{\tiny \begin{array}{cc} 1 \\2 \end{array}}$};
        \node at (2,0)[circle,fill,inner sep=1.5pt]{};
        \filldraw[black] (-1.8,-0.2) node[above left] {-$g^{\tiny \begin{array}{cc} 3 \\1 \end{array}}$};
        \node at (-2,0)[circle,fill,inner sep=1.5pt]{};
        \filldraw[black] (0,-3.46410161514) node[below] {-$g^{\tiny \begin{array}{cc} 2 \\ 3 \end{array}}$};
        \node at (0,-3.46410161514)[circle,fill,inner sep=1.5pt]{};
        \filldraw[black] (2,4.5) node[above] {$\Cfrak_{1}$};
        \filldraw[black] (4.75,-0.5) node[above] {$\Cfrak_{2}$};
        \filldraw[black] (-1.5,-6.25) node[above] {$\Cfrak_{3}$};
        \filldraw[black] (-5,0.5) node[above] {$\Cfrak_{4}$};
        \filldraw[black] (2.8,0.9) node[above] {$\Cfrak_{5}$};
        \filldraw[black] (3,-3.3) node[above] {$\Cfrak_{6}$};
        \filldraw[black] (1,-5) node[above] {$\Cfrak_{7}$};
        \filldraw[black] (-3.1,-3.2) node[above] {$\Cfrak_{8}$};
        \filldraw[black] (-3.65,-1) node[above] {$\Cfrak_{9}$};
        \filldraw[black] (0.3,2.1) node[above] {$\Cfrak_{10}$};
        \filldraw[black] (2.3,-2) node[above] {$\Cfrak_{11}$};
        \filldraw[black] (-1.6,-3) node[above] {$\Cfrak_{12}$};
        \filldraw[black] (0,1.25) node[left] {$\Cfrak_{13}$};
        \filldraw[black] (0,-1.7) node[above] {$\Cfrak_{14}$};
    \end{tikzpicture}
    \caption{Wall-and-chamber structure with chambers corresponding to $\tau$-tilting pairs defined by the surrounding $g$-vectors.} \label{fig:bigwacexmp}
    \end{figure}

    \begin{table}[ht!]
    \centering
    \begin{tabular}{||c | c | c | c | c ||}
     \hline
     Chamber & $\tau$-tilting pair $(T,P)$ & $G_{(T,P)}$ & $C_{(T,P)}$ & torsion class $\Fac T$  \\ [0.5ex] 
     \hline\hline
     $\Cfrak_1$ & $({\tiny\begin{array}{cc} 1 \\ 2 \end{array}} \oplus {\tiny\begin{array}{cc} 2 \\ 3 \end{array}} \oplus {\tiny\begin{array}{cc} 3 \\1 \end{array}}, {\tiny\begin{array}{cc} 0 \end{array}})$ & \tiny$\begin{pmatrix} 1 & 0 & 0 \\ 0 & 1 & 0 \\ 0 & 0 & 1 \end{pmatrix}$ & \tiny$\begin{pmatrix} 1 & 0 & 0 \\ 0 & 1 & 0 \\ 0 & 0 & 1 \end{pmatrix}$ & \small$\mods A$ \\ 
     \hline 
     $\Cfrak_2$ & $({\tiny \begin{array}{c} 3 \end{array}} \oplus {\tiny\begin{array}{cc} 2 \\ 3 \end{array}} \oplus {\tiny\begin{array}{cc} 3 \\1 \end{array}} , {\tiny\begin{array}{cc} 0 \end{array}})$ & \tiny$\begin{pmatrix} -1 & 0 & 0 \\ 0 & 1 & 0 \\ 1 & 0 & 1 \end{pmatrix}$ & \tiny$\begin{pmatrix} -1 & 0 & 0 \\ 0 & 1 & 0 \\ 1 & 0 & 1 \end{pmatrix}$ & \tiny$\add \{{\tiny\begin{array}{cc} 2 \\ 3 \end{array}} \oplus {\tiny\begin{array}{cc} 3 \\1 \end{array}} \oplus {\tiny\begin{array}{cc} 2 \end{array}} \oplus {\tiny \begin{array}{c} 3 \end{array}}\} $ \\
     \hline
     $\Cfrak_3$ & $({\tiny\begin{array}{c} 1 \\ 2 \end{array}} \oplus {\tiny \begin{array}{c} 1 \end{array}} \oplus {\tiny\begin{array}{cc} 3 \\1 \end{array}}, {\tiny\begin{array}{cc} 0 \end{array}})$ & \tiny$\begin{pmatrix} 1 & 1 & 0 \\ 0 & -1 & 0 \\ 0 & 0 & 1 \end{pmatrix}$ & \tiny$\begin{pmatrix} 1 & 1 & 0 \\ 0 & -1 & 0 \\ 0 & 0 & 1 \end{pmatrix}$ & \tiny$\add \{ {\tiny\begin{array}{cc} 1 \\ 2 \end{array}} \oplus {\tiny\begin{array}{cc} 3 \\1 \end{array}} \oplus {\tiny \begin{array}{c} 1 \end{array}} \oplus {\tiny \begin{array}{c} 3 \end{array}} \}$ \\
     \hline
     $\Cfrak_4$ & $({\tiny\begin{array}{cc} 1 \\ 2 \end{array}} \oplus {\tiny\begin{array}{cc} 2 \\ 3 \end{array}} \oplus {\tiny \begin{array}{c} 2 \end{array}}, {\tiny\begin{array}{cc} 0 \end{array}})$ & \tiny$\begin{pmatrix} 1 & 0 & 0 \\ 0 & 1 & 1 \\ 0 & 0 & -1 \end{pmatrix}$ & \tiny$\begin{pmatrix} 1 & 0 & 0 \\ 0 & 1 & 1 \\ 0 & 0 & -1 \end{pmatrix}$ & \tiny$\add \{ {\tiny\begin{array}{cc} 1 \\ 2 \end{array}} \oplus {\tiny\begin{array}{cc} 2 \\ 3 \end{array}}\oplus {\tiny \begin{array}{c} 1 \end{array}} \oplus {\tiny \begin{array}{c} 2 \end{array}} \}$ \\
     \hline
     $\Cfrak_5$ & $({\tiny \begin{array}{c} 3 \end{array}} \oplus {\tiny\begin{array}{cc} 2\\3 \end{array}}, {\tiny\begin{array}{cc} 1 \\ 2 \end{array}})$ & \tiny$\begin{pmatrix} -1 & 0 & -1 \\ 0 & 1 & 0 \\ 1 & 0 & 0 \end{pmatrix}$ & \tiny$\begin{pmatrix} 0 & 0 & 1 \\ 0 & 1 & 0 \\ -1 & 0 & -1 \end{pmatrix}$ & \tiny$\add \{ {\tiny\begin{array}{cc} 2\\3 \end{array}}\oplus {\tiny \begin{array}{c} 2 \end{array}} \oplus {\tiny \begin{array}{c} 3 \end{array}} \}$ \\
     \hline
     $\Cfrak_6$ & $({\tiny \begin{array}{c} 3 \end{array}} \oplus {\tiny\begin{array}{cc} 3 \\1 \end{array}}, {\tiny\begin{array}{cc} 2 \\ 3 \end{array}})$ & \tiny$\begin{pmatrix} -1 & 0 & 0 \\ 0 & 0 & -1 \\ 1 & 1 & 0 \end{pmatrix}$ & \tiny$\begin{pmatrix} -1 & 0 & 0 \\ 1 & 0 & 1 \\ 0 & -1 & 0 \end{pmatrix}$ & \tiny$\add \{{\tiny\begin{array}{cc} 3 \\1 \end{array}}  \oplus {\tiny \begin{array}{c} 3 \end{array}}\}$ \\
     \hline
     $\Cfrak_7$ & $({\tiny \begin{array}{c} 1 \end{array}} \oplus {\tiny\begin{array}{cc} 3 \\ 1 \end{array}}, {\tiny\begin{array}{cc} 2 \\ 3 \end{array}})$ & \tiny$\begin{pmatrix} 1 & 0 & 0 \\ -1 & 0 & -1 \\ 0 & 1 & 0 \end{pmatrix}$ & \tiny$\begin{pmatrix} 1 & 0 & 0 \\ 0 & 0 & 1 \\ -1 & -1 & 0 \end{pmatrix}$ & \tiny $\add \{ {\tiny\begin{array}{cc} 3 \\ 1 \end{array}} \oplus {\tiny \begin{array}{c} 1 \end{array}} \oplus {\tiny \begin{array}{c} 3 \end{array}} \}$ \\
     \hline
     $\Cfrak_8$ & $({\tiny\begin{array}{cc} 1 \\ 2 \end{array}} \oplus {\tiny \begin{array}{c} 1 \end{array}}, {\tiny\begin{array}{cc} 3 \\ 1 \end{array}})$ & \tiny$\begin{pmatrix} 1 & 1 & 0 \\ 0 & -1 & 0 \\ 0 & 0 & -1 \end{pmatrix}$ & \tiny$\begin{pmatrix} 1 & 1 & 0 \\ 0 & -1 & 0 \\ 0 & 0 & -1 \end{pmatrix}$  & \tiny $\add \{ {\tiny\begin{array}{cc} 1 \\ 2 \end{array}} \oplus {\tiny \begin{array}{c} 1 \end{array}} \}$ \\
     \hline
     $\Cfrak_9$ & $({\tiny\begin{array}{cc} 1 \\ 2 \end{array}} \oplus {\tiny \begin{array}{c} 2 \end{array}}, {\tiny\begin{array}{cc} 3 \\ 1 \end{array}})$ & \tiny$\begin{pmatrix} 1 & 0 & 0 \\ 0 & 1 & 0 \\ 0 & -1 & -1 \end{pmatrix}$ & \tiny$\begin{pmatrix} 1 & 0 & 0 \\ 0 & 1 & 0 \\ 0 & -1 & -1 \end{pmatrix}$ & \tiny$\add \{ {\tiny\begin{array}{cc} 1 \\ 2 \end{array}} \oplus {\tiny \begin{array}{c} 1 \end{array}} \oplus {\tiny \begin{array}{c} 2 \end{array}}\}$\\
     \hline
     $\Cfrak_{10}$ & $({\tiny\begin{array}{cc} 2 \\3 \end{array}} \oplus {\tiny \begin{array}{c} 2 \end{array}}, {\tiny\begin{array}{cc} 1 \\ 2 \end{array}})$ & \tiny$\begin{pmatrix} 0 & 0 & -1 \\ 1 & 1 & 0 \\ 0 & -1 & 0 \end{pmatrix}$ & \tiny$\begin{pmatrix} 0 & 1 & 1 \\ 0 & 0 & -1 \\ -1 & 0 & 0 \end{pmatrix}$ & \tiny$\add \{ {\tiny\begin{array}{cc} 2 \\3 \end{array}} \oplus {\tiny \begin{array}{c} 2 \end{array}}\}$\\
     \hline
     $\Cfrak_{11}$ & $({\tiny \begin{array}{c} 3 \end{array}}, {\tiny\begin{array}{cc} 1 \\ 2 \end{array}}  \oplus {\tiny\begin{array}{cc} 2 \\ 3 \end{array}})$ & \tiny$\begin{pmatrix} -1 & -1 & 0 \\ 0 & 0 & -1 \\ 1 & 0 & 0 \end{pmatrix}$ & \tiny$\begin{pmatrix} 0 & 0 & 1 \\ -1 & 0 & -1 \\ 0 & -1 & 0 \end{pmatrix}$ & \tiny $\add \{ {\tiny \begin{array}{c} 3 \end{array}}\}$ \\
     \hline
     $\Cfrak_{12}$ & $({\tiny\begin{array}{cc} 1 \end{array}},{\tiny\begin{array}{cc} 2 \\ 3 \end{array}} \oplus {\tiny\begin{array}{cc} 3 \\1 \end{array}})$ & \tiny$\begin{pmatrix} 1 & 0 & 0 \\ -1 & -1 & 0 \\ 0 & 0 & -1 \end{pmatrix}$ & \tiny$\begin{pmatrix} 1 & 0 & 0 \\ -1 & -1 & 0 \\ 0 & 0 & -1 \end{pmatrix}$ & \tiny $\add\{ {\tiny\begin{array}{cc} 1 \end{array}}\}$\\
     \hline
     $\Cfrak_{13}$ & $({\tiny \begin{array}{c} 2 \end{array}}, {\tiny\begin{array}{cc} 1 \\ 2 \end{array}} \oplus {\tiny\begin{array}{cc} 3 \\ 1 \end{array}})$ & \tiny$\begin{pmatrix} 0 & -1 & 0 \\ 1 & 0 & 0 \\ -1 & 0 & -1 \end{pmatrix}$ & \tiny$\begin{pmatrix} 0 & 1 & 0 \\ -1 & 0 & 0 \\ 0 & -1 & -1 \end{pmatrix}$ & \tiny $ \add\{ {\tiny \begin{array}{c} 2 \end{array}}\}$ \\
     \hline
     $\Cfrak_{14}$ & $({\tiny\begin{array}{cc} 0 \end{array}},{\tiny\begin{array}{cc} 1 \\ 2 \end{array}} \oplus {\tiny\begin{array}{cc} 2 \\ 3 \end{array}} \oplus {\tiny\begin{array}{cc} 3 \\1 \end{array}})$ & \tiny$\begin{pmatrix} -1 & 0 & 0 \\ 0 & -1 & 0 \\ 0 & 0 & -1 \end{pmatrix}$ & \tiny$\begin{pmatrix} -1 & 0 & 0 \\ 0 & -1 & 0 \\ 0 & 0 & -1 \end{pmatrix}$ & \tiny $\add \{ {\tiny\begin{array}{cc} 0 \end{array}} \}$ \\
     \hline
    \end{tabular}
    \caption{Chambers, their $\tau$-tilting pairs, $g$-matrices, $c$-matrices and torsion classes.}
    \label{table:A3table}
    \end{table}


\begin{ack}
MK and HT are grateful to the organisers of the Workshop of the ICRA held in Montevideo, Uruguay, in August 2022, for putting together such a stimulating event.
HT is also grateful to the Scientific Committee of ICRA 2022 for inviting him to give the lecture series whose content is reflected in these notes.
\end{ack}

\begin{funding}
MK is supported by the DFG through the project SFB/TRR 191 Symplectic Structures in Geometry, Algebra and Dynamics (Projektnummer 281071066-TRR 191). HT is supported by the European Union’s Horizon 2020 research and innovation programme under the Marie Sklodowska-Curie grant agreement No 893654. 
\end{funding}



\begin{thebibliography}{10}
\providecommand{\url}[1]{\texttt{#1}}
\providecommand{\urlprefix}{URL }
\providecommand{\eprint}[2][]{\url{#2}}

\bibitem{AIR2014}
T.~Adachi, O.~Iyama, and I.~Reiten, $\tau $-tilting theory. \emph{Compos.
  Math.} \textbf{150} (2014), no.~3, 415–452

\bibitem{Amiot}
C.~Amiot, Cluster categories for algebras of global dimension 2 and quivers
  with potential. \emph{Ann. Inst. Fourier} \textbf{59} (2009), no.~6,
  2525--2590

\bibitem{Asai2020}
S.~Asai, {Semibricks}. \emph{Int. Math. Res. Not.}  (2020), no.~16, 4993--5054

\bibitem{Asai2019}
S.~Asai, The wall-chamber structures of the real grothendieck groups.
  \emph{Adv. Math.} \textbf{381} (2021), no.~45, Id/No 107615

\bibitem{bluebookV1}
I.~Assem, A.~Skowroński, and D.~Simson, \emph{Elements of the representation
  theory of associative algebras: Techniques of representation theory}. Lond.
  Math. Soc. Stud. Texts 1, Cambridge University Press, 2006

\bibitem{AR1985}
M.~Auslander and I.~Reiten, Modules determined by their composition factors.
  \emph{Ill. J. Math.} \textbf{29} (1985), no.~2, 280 -- 301

\bibitem{AuslanderSmalo1980}
M.~Auslander and S.~O. Smalø, Almost split sequences in subcategories.
  \emph{J. Algebra} \textbf{69} (1981), no.~2, 426--454

\bibitem{BKT2011}
P.~Baumann, J.~Kamnitzer, and P.~Tingley, Affine mirković-vilonen polytopes.
  \emph{Publ. Math., Inst. Hautes Étud. Sci.} \textbf{120} (2014), 113--205

\bibitem{Bridgeland}
T.~Bridgeland, Scattering diagrams, hall algebras and stability conditions.
  \emph{Algebr. Geom.} \textbf{4} (2017), 523--561

\bibitem{BST2019}
T.~Brüstle, D.~A. Smith, and H.~Treffinger, Wall and chamber structure for
  finite-dimensional algebras. \emph{Adv. Math.} \textbf{354} (2019), no.~3,
  Id/No 106746

\bibitem{BSTStability}
T.~Brüstle, D.~A. Smith, and H.~Treffinger, Stability conditions and maximal
  green sequences in abelian categories. \emph{Rev. Unión Mat. Argent.}
  \textbf{63} (2022), no.~1, 203--221

\bibitem{BuanMarshReinekeReitenTodorov}
A.~B. Buan, B.~R. Marsh, M.~Reineke, I.~Reiten, and G.~Todorov, Tilting theory
  and cluster combinatorics. \emph{Adv. Math.} \textbf{204} (2006), no.~2,
  572--618

\bibitem{CalderoChapotonSchiffler}
P.~Caldero, F.~Chapoton, and R.~Schiffler, Quivers with relations arising from
  clusters ({{\(A_n\)}} case). \emph{Trans. Am. Math. Soc.} \textbf{358}
  (2006), no.~3, 1347--1364

\bibitem{DehyKeller2010}
R.~Dehy and B.~Keller, On the combinatorics of rigid objects in
  2–calabi–yau categories. \emph{Int. Math. Res. Not.} \textbf{11} (2008),
  Id rnn029

\bibitem{DIJ2019}
L.~Demonet, O.~Iyama, and G.~Jasso, $\tau$-tilting finite algebras, bricks, and
  $g$-vectors. \emph{Int. Math. Res. Not.} \textbf{3} (2019), 852--892

\bibitem{Dickson66}
S.~E. Dickson, A torsion theory for abelian categories. \emph{Trans. Am. Math.
  Soc.} \textbf{121} (1966), 223--235

\bibitem{FockGoncharov}
V.~V. Fock and A.~B. Goncharov, Cluster {{\(\chi\)}}-varieties, amalgamation,
  and {Poisson}-{Lie} groups  (2006), 27--68

\bibitem{FominZelevinsky2001}
S.~Fomin and A.~Zelevinsky, Cluster algebras i: Foundations. \emph{J. Am. Math.
  Soc.} \textbf{15} (2002), no.~2, 497--529

\bibitem{FominZelevinsky2007}
S.~Fomin and A.~Zelevinsky, Cluster algebras iv: Coefficients. \emph{Compos.
  Math.} \textbf{143} (2007), no.~1, 112–164

\bibitem{Fu2017}
C.~Fu, c-vectors via $\tau$-tilting theory. \emph{J. Algebra} \textbf{473}
  (2017), 194--220

\bibitem{Gabriel}
P.~Gabriel, Unzerlegbare darstellungen i. \emph{Manuscr. Math.} \textbf{6}
  (1972), 71--103

\bibitem{GeissLeclercSchroer}
C.~Gei{\ss}, B.~Leclerc, and J.~Schr{\"o}er, Rigid modules over preprojective
  algebras. \emph{Invent. Math.} \textbf{165} (2006), no.~3, 589--632

\bibitem{GHKK2018}
M.~Gross, {P. Hacking, S. Keel}, and M.~Kontsevich, Canonical bases for cluster
  algebras. \emph{J. Am. Math. Soc.} \textbf{31} (2018), no.~2, 497–608

\bibitem{Iyama2007a}
O.~Iyama, Auslander correspondence. \emph{Adv. Math.} \textbf{210} (2007),
  no.~1, 51--82

\bibitem{Iyama2007b}
O.~Iyama, Higher-dimensional auslander–reiten theory on maximal orthogonal
  subcategories. \emph{Adv. Math.} \textbf{210} (2007), no.~1, 22--50

\bibitem{Jasso2015}
G.~Jasso, Reduction of $\tau$-tilting modules and torsion pairs. \emph{Int.
  Math. Res. Not.} \textbf{2015} (2015), no.~16, 7190--7237

\bibitem{King1994}
A.~King, Moduli of representations of finite dimensional algebras. \emph{Q. J.
  Math.} \textbf{45} (1994), no.~4, 515--530

\bibitem{Morita}
K.~Morita, Duality for modules and its applications to the theory of rings with
  minimum condition. \emph{Sci. Rep. Tokyo Kyoiku Daigaku, Sect. A} \textbf{6}
  (1958), no. 150, 83--142

\bibitem{Mumford65}
D.~Mumford, \emph{Geometric invariant theory}. ergebnisse edition, Springer
  Verlag, Berlin-New York, 1965

\bibitem{Rudakov1997}
A.~Rudakov, Stability for an abelian category. \emph{J. Algebra} \textbf{197}
  (1997), no.~1, 231--245

\bibitem{ST2020}
S.~Schroll and H.~Treffinger, A $\tau$-tilting approach to the first
  brauer-thrall conjecture. 2020

\bibitem{Smalo1984}
S.~Smalø, Torsion theories and tilting modules. \emph{Bull. Lond. Math. Soc.}
  \textbf{16} (1984), no.~5, 518--522

\bibitem{Thomas2021}
H.~Thomas, An introduction to the lattice of torsion classes. \emph{Bull. Iran.
  Math. Soc.} \textbf{47} (2021), no.~5, 35--55

\bibitem{Treffinger2019}
H.~Treffinger, On sign-coherence of c-vectors. \emph{J. Pure Appl. Algebra}
  \textbf{223} (2019), no.~6, 2382--2400

\bibitem{Treffinger2021}
H.~Treffinger, $\tau$-tilting theory - an introduction. 2021,
  \urlprefix\url{https://arxiv.org/abs/2106.00426}

\end{thebibliography}
\end{document}